\def\newaliasedtheorem#1[#2]#3{
	\newaliascnt{#1@alt}{#2}
	\newtheorem{#1}[#1@alt]{#3}
	\expandafter\newcommand\csname #1@altname\endcsname{#3}
}
\numberwithin{equation}{section}
\newtheoremstyle{slanted}{\topsep}{\topsep}{\slshape}{}{\bfseries}{.}{.5em}{}
\theoremstyle{plain}
\newtheorem{theorem}{Theorem}[section]
\theoremstyle{definition}
\theoremstyle{remark}
\newcommand{\setN}{\mathbb{N}}
\newcommand{\setR}{\mathbb{R}}
\newcommand{\eps}{\varepsilon}
\let\altphi\phi
\let\phi\varphi
\let\varphi\altphi
\let\altphi\undefined
\newcommand{\abs}[1]{\left\lvert#1\right\rvert}
\newcommand{\norm}[1]{\left\lVert#1\right\rVert}
\DeclareMathOperator{\Hess}{Hess}
\newcommand{\di}{\mathop{}\!\mathrm{d}}
\newcommand{\res}{\mathop{\hbox{\vrule height 7pt width .5pt depth 0pt
			\vrule height .5pt width 6pt depth 0pt}}\nolimits}
\DeclareMathOperator{\supp}{supp}
\newcommand{\Ch}{{\sf Ch}}
\DeclareMathOperator{\Lip}{Lip}
\DeclareMathOperator{\Lipb}{Lip_b}
\DeclareMathOperator{\lip}{lip} 
\DeclareMathOperator{\Ric}{Ric}
\DeclareMathOperator{\diam}{diam}
\DeclareMathOperator{\de}{d}
\newcommand{\leb}{\mathscr{L}}
\newcommand{\Prob}{\mathscr{P}}
\newcommand{\XX}{{\boldsymbol{X}}}
\newcommand{\dist}{\mathsf{d}}
\newcommand{\meas}{\mathfrak{m}}
\newcommand{\Tan}{{\rm Tan}}
\DeclareMathOperator{\CD}{CD}
\DeclareMathOperator{\RCD}{RCD}
\DeclareMathOperator{\ncRCD}{ncRCD}
\newfont{\tmpf}{cmsy10 scaled 2500}
\def\XXint#1#2#3{{\setbox0=\hbox{$#1{#2#3}{\int}$ }
		\vcenter{\hbox{$#2#3$ }}\kern-.6\wd0}}
\begin{document}
	
	\title{Volume bounds for the quantitative singular strata of non collapsed $\RCD$ metric measure spaces}
	\author{Gioacchino Antonelli
		\thanks{Scuola Normale Superiore, \url{gioacchino.antonelli@sns.it}.}\and
		Elia Bru\'e 
		\thanks{Scuola Normale Superiore, \url{elia.brue@sns.it}.}\and Daniele Semola
		\thanks{Scuola Normale Superiore,
			\url{daniele.semola@sns.it}.}}
	\maketitle
		
	\begin{abstract}
	The aim of this note is to generalize to the class of non collapsed $\RCD(K,N)$ metric measure spaces the volume bound for the effective singular strata obtained by Cheeger and Naber for non collapsed Ricci limits in \cite{CheegerNaber13a}. The proof, which is based on a quantitative differentiation argument, closely follows the original one.  
	As a simple outcome we provide a volume estimate for the enlargement of Gigli-DePhilippis' boundary (\cite[Remark 3.8]{DePhilippisGigli18}) of $\ncRCD(K,N)$ spaces.
	\end{abstract}

	\tableofcontents

\section*{Introduction}

In the last years the theory of metric measure spaces $(X,\dist,\meas)$ satisfying the Riemannian curvature dimension condition has undergone several remarkable developments. After the introduction, in the independent works \cite{Sturm06a,Sturm06b} and \cite{LottVillani}, of the curvature dimension condition $\CD(K,N)$ encoding in a synthetic way the notion of Ricci curvature bounded from below and dimension bounded above, the definition of $\RCD(K,N)$ metric measure space was proposed and extensively studied in \cite{Gigli13,ErbarKuwadaSturm15,AmbrosioMondinoSavare15} (see also \cite{CavallettiMilman} for the equivalence between the $\RCD^*(K,N)$ and the $\RCD(K,N)$ condition) in order to single out spaces with Hilbert-like behaviour at infinitesimal scale. The infinite dimensional counterpart of this notion had been previously investigated in \cite{AmbrosioGigliSavare14}.\\ 
In particular, due to the compatibility of the $\RCD$ condition with the smooth case of Riemannian manifolds with Ricci curvature bounded form below and to its stability with respect to pointed measured Gromov-Hausdorff convergence, limits of smooth Riemannian manifolds with Ricci curvature uniformly bounded from below and dimension uniformly bounded from above are $\RCD(K,N)$ spaces. The study of Ricci limits was initiated by Cheeger and Colding in the nineties in the series of papers \cite{CheegerColding96,CheegerColding97,CheegerColding2000a,CheegerColding2000b} and has seen remarkable developments in more recent years  (see for instance \cite{ColdingNaber}).
Since the above mentioned pioneering works, it was known that the regularity theory for Ricci limits improves adding to the lower curvature bound a uniform lower bound for the volume of unit balls along the converging sequence of Riemannian manifolds: this is the case of the so called \textit{non collapsed} Ricci limits. In particular, as a consequence of the volume convergence theorem proved in \cite{Colding97}, it is known that the limit measure of the volume measures is the Hausdorff measure on the limit metric space (while this might not be the case for a general Ricci limit space).

Inspired by the theory of non collapsed Ricci limits, De Philippis and Gigli proposed in \cite{DePhilippisGigli18} a notion of non collapsed $\RCD(K,N)$ metric measure space $(X,\dist,\meas)$ ($\ncRCD(K,N)$ for short) asking that $\meas=\mathcal{H}^N$, the $N$-dimensional Hausdorff measure over $(X,\dist)$. Let us remark that this class of spaces had already been studied by Kitabeppu in \cite{Kitabeppu17}.\\ 
Let us point out that recently examples of metric measure spaces which are $\ncRCD$ but not non collapsed Ricci limits have been built: hence a gap widens between the two theories. Nevertheless in \cite{DePhilippisGigli18} the authors were able to prove that many of the structural results valid for non collapsed Ricci limits hold for $\ncRCD$ spaces. In particular, building upon \cite{DePhilippisGigli16}, it is possible to prove that any tangent cone to a $\ncRCD$ space is a metric cone. Letting then $\mathcal{R}\subset X$ be the set of those points where the tangent cone is the $N$-dimensional Euclidean space, following \cite{CheegerColding97} it is possible to introduce a stratification
\begin{equation*}
\mathcal{S}^0\subset\dots\subset\mathcal{S}^{N-1}=\mathcal{S}=X\setminus\mathcal{R}
\end{equation*}
of the singular set $\mathcal{S}$, where, for any $k=0,\dots,N-1$, $\mathcal{S}^k$ is the set of those points where no tangent cone splits a factor $\setR^{k+1}$. Adapting the arguments of \cite{CheegerColding97}, in \cite{DePhilippisGigli18} the Hausdorff dimension estimate $\dim_{H}\mathcal{S}^k\le k$ was obtained.

In \cite{CheegerNaber13a} a quantitative and effective counterpart of the above mentioned stratification of the singular set was introduced letting, for any $k=0,\dots,N-1$ and for any $r,\eta>0$, $\mathcal{S}^{k}_{\eta,r}$ be the set of those points $x\in X$ where the scale invariant Gromov-Hausdorff distance between the ball $B_s(x)$ and any ball of the same radius centered at the tip of a metric cone splitting a factor $\setR^{k+1}$ is bigger than $\eta $ for any $r<s<1$.\\     
While in the classical stratification points are separated according to the number of symmetries of tangent cones, in the quantitative one they are classified according to the number of symmetries of balls of fixed scales therein centered. In particular, the effective singular strata might be non empty even in the case of smooth Riemannian manifolds while in that case there is no singular point.

Starting from \cite{CheegerNaber13a} a number of properties for the effective singular strata on non collapsed Ricci limit spaces have been obtained. In particular, in the very recent \cite{CheegerJangNaber18}, the authors were able to prove $k$-rectifiability of the classical singular stratum $\mathcal{S}^k$ building on the top of some new volume estimates for the effective strata.  

The aim of this note is twofold. On the one hand our main result \autoref{thm:RCDquantitativevolumebound} generalizes to the class of $\ncRCD$ the volume estimate for the effective singular strata obtained by Cheeger and Naber in \cite{CheegerNaber13a} (which is easily seen to be stronger than the above mentioned Hausdorff dimension estimate $\dim_{H}\mathcal{S}^k\le k$), on the other hand we give detailed proofs (in the metric context) of some of the results that therein were just stated. Let us point out that \autoref{thm:RCDquantitativevolumebound} has already an application in the proof of \cite[Theorem 5.8]{MondinoKapovitch19}. 

Let us remark that the proof of the volume estimate, which closely follows the one for Ricci limits, provides an instance of the so called \textit{quantitative differentiation} technique that, although being quite recent in its formulation, has already a broad range of applications in the regularity theory in various different geometric and analytic contexts.\\ 
In general, quantitative differentiation allows to bound the number of locations and scales at which a given geometric configuration is far away from any element of a class of special configurations. In the case of our interest special configurations are the conical ones.
We refer to \cite{Cheeger12} for a general survey about quantitative differentiation and detailed list of references to the recent applications of this tools in the various contexts.

This note is organised as follows: in \autoref{sec:preliminaries} we list a few basic definitions and results useful when dealing with $\ncRCD$ metric measure spaces. Most of the results are stated without proof and references are indicated. We provide instead proofs for the ``almost volume cone implies almost metric cone'' \autoref{FromAlmostVolumeCone} and the ``almost cone splitting'' \autoref{thm:conesplittingquant}, since we were not able to find any reference in the literature. In \autoref{sec:mainresult} we give a complete proof of the volume bound for the effective singular strata following the same strategy introduced by Cheeger and Naber in the setting of non collapsed Ricci limit spaces.


\textbf{Acknowledgements.} The authors warmly thank Luigi Ambrosio for several discussions around the topic of this note. They are also grateful to Andrea Mondino for some useful comments on an earlier version of the paper.

\section{Preliminaries}\label{sec:preliminaries}
Throughout this paper a \textit{metric measure space} is a triple $(X,\dist,\meas)$, where $(X,\dist)$ is a separable metric space and $\meas$ is a nonnegative Borel measure on $X$ finite on bounded sets. From now on when we will write \emph{m.m.s.} we mean \emph{metric measure space(s)}. We will denote by $B_r(x)=\{\dist(\cdot,x)<r\}$ and $\bar{B}_r(x)=\{\dist(\cdot,x)\leq r\}$ the open and closed balls respectively, by $\Lip(Z)$ (resp. $\Lipb(Z)$) the space of Lipschitz (resp. bounded) functions and for any $f\in\Lip(Z)$ we shall denote its slope by
\begin{equation*}
\lip f(x)\doteq \limsup_{y\to x}\frac{\abs{f(x)-f(y)}}{\dist(x,y)}.
\end{equation*}
We will use the standard notation $L^p(X,\meas)$, for the $L^p$ spaces and $\leb^n,\mathcal{H}^n$ for the $n$-dimensional Lebesgue measure on $\setR^n$ and the $n$-dimensional Hausdorff measure on a metric space, respectively. We shall denote by $\omega_n$ the Lebesgue measure of the unit ball in $\setR^n$.

The Cheeger energy $\Ch:L^2(X,\meas)\to[0,+\infty]$ associated to a m.m.s. $(X,\dist,\meas)$ is the convex and lower semicontinuous functional defined through
\begin{equation}\label{eq:cheeger}
\Ch(f)\doteq \inf\left\lbrace\liminf_{n\to\infty}\int\lip^2f_n\di\meas:\quad f_n\in\Lipb(X)\cap L^2(X,\meas),\ \norm{f_n-f}_2\to 0 \right\rbrace
\end{equation}
and its finiteness domain will be denoted by $W^{1,2}(X,\dist,\meas)$. Looking at the optimal approximating sequence in \eqref{eq:cheeger}, it is possible to identify a canonical object $\abs{\nabla f}$, called minimal relaxed slope, providing the integral representation
\begin{equation*}
\Ch(f)\doteq \int_X\abs{\nabla f}^2\di\meas\qquad\forall f\in W^{1,2}(X,\dist,\meas).
\end{equation*}
Any metric measure space such that $\Ch$ is a quadratic form is said to be \textit{infinitesimally Hilbertian} and from now on we shall always make this assumption, unless otherwise stated. Let us recall from \cite{AmbrosioGigliSavare14,Gigli15} that, under this assumption, the function 
\begin{equation*}
\nabla f_1\cdot\nabla f_2\doteq \lim_{\eps\to 0}\frac{\abs{\nabla(f_1+\eps f_2)}^2-\abs{\nabla f_1}^2}{2\eps}
\end{equation*}
defines a symmetric bilinear form on $W^{1,2}(X,\dist,\meas)\times W^{1,2}(X,\dist,\meas)$ with values into $L^1(X,\meas)$.

It is possible to define a Laplacian operator $\Delta:\mathcal{D}(\Delta)\subset L^{2}(X,\meas)\to L^2(X,\meas)$ in the following way. We let $\mathcal{D}(\Delta)$ be the set of those $f\in W^{1,2}(X,\dist,\meas)$ such that, for some $h\in L^2(X,\meas)$, one has
\begin{equation}\label{eq:amb1}
\int_X \nabla f\cdot\nabla g\di\meas=-\int_X hg\di\meas\qquad\forall g\in W^{1,2}(X,\dist,\meas) 
\end{equation} 
and, in that case, we put $\Delta f=h$. 
It is easy to check that the definition is well-posed and that the Laplacian is linear (because $\Ch$ is a quadratic form).\\

\subsection{$\RCD(K,N)$ metric measure spaces}\label{section: preliminaries RCD}

The notion of $\RCD(K,N)$ m.m.s. was proposed and extensively studied in \cite{Gigli15,AmbrosioMondinoSavare15,ErbarKuwadaSturm15} (see also \cite{CavallettiMilman} for the equivalence between the $\RCD$ and the $\RCD^*$ condition), as a finite dimensional counterpart to $\RCD(K,\infty)$ m.m.s. which were introduced and firstly studied in \cite{AmbrosioGigliSavare14} (see also \cite{AmbGigliMondRaj12}, dealing with the case of $\sigma$-finite reference measures). We point out that these spaces can be introduced and studied both from an Eulerian point of view, based on the so-called $\Gamma$-calculus, and from a Lagrangian point of view, based on optimal transportation techniques, which is the one we shall adopt in this brief introduction. 

Let us start recalling the so-called curvature dimension condition $\CD(K,N)$. Its introduction dates back to the seminal and independent works \cite{LottVillani} and \cite{Sturm06a,Sturm06b}, while in this presentation we closely follow \cite{BacherSturm10}.

\begin{definition}[Curvature dimension bounds]\label{def:CD}
	Let $K\in\setR$ and $1\le N<+\infty$. We say that a m.m.s. $(X,\dist,\meas)$ is a $\CD(K,N)$ space if, for any $\mu_0,\mu_1\in\Prob(X)$ absolutely continuous w.r.t. $\meas$ with bounded support, there exists an optimal geodesic plan $\Pi\in\Prob(\text{Geo}(X))$ such that for any $t\in[0,1]$ and for any $N'\ge N$ we have 
	\begin{align*}
	-\int & \rho_t^{1-\frac{1}{N'}}\di\meas\\
	\le &-\int\left\lbrace\tau_{K,N'}^{(1-t)}(\dist(\gamma(0),\gamma(1)))\rho_0^{-\frac{1}{N'}}(\gamma(0))+\tau_{K,N'}^{(t)}(\dist(\gamma(0),\gamma(1)))\rho_1^{-\frac{1}{N'}}(\gamma(1)) \right\rbrace\di\Pi(\gamma), 
	\end{align*} 
	where $(e_t)_{\sharp}\Pi=\rho_t\meas$, $\mu_0=\rho_0\meas$, $\mu_1=\rho_1\meas$ and the distortion coefficients $\tau_{K,N}^{t}(\cdot)$ are defined as follows. First we define the coefficients $[0,1]\times[0,+\infty)\ni(t,\theta)\mapsto\sigma_{K,N}^{(t)}(\theta)$ by
	\begin{equation*}
	\sigma_{K,N}^{(t)}(\theta)\doteq
	\begin{cases*}
	+\infty &\text{if $K\theta^2\ge N\pi^2$,}\\
	\frac{\sin(t\theta\sqrt{K/N})}{\sin(\theta\sqrt{K/N})}&\text{if $0<\theta<N\pi^2$,}\\
	t&\text{if $K\theta^2=0$,}\\
	\frac{\sinh(t\theta\sqrt{K/N})}{\sinh(\theta\sqrt{K/N})}&\text{if $K\theta^2<0$,}
	\end{cases*}
	\end{equation*}
	then we set $\tau_{K,N}^{(t)}(\theta)\doteq t^{1/N}\sigma_{K,N-1}^{(t)}(\theta)^{1-1/N}$.
\end{definition}

The main object of our study in this paper will be $\RCD(K,N)$ spaces, that we introduce below.

\begin{definition}\label{defRCDKN}
	We say that a metric measure space $(X,\dist,\meas)$ satisfies the \textit{Riemannian curvature-dimension} condition (it is an $\RCD(K,N)$ m.m.s. for short) for some $K\in\setR$ and $1\le N<+\infty$ if it is a $\CD(K,N)$ m.m.s. and the Banach space $W^{1,2}(X,\dist,\meas)$ is Hilbert.
\end{definition}
Note that, if $(X,\dist,\meas)$ is an $\RCD(K,N)$ m.m.s., then so is $(\supp\meas,\dist,\meas)$, hence in the following we will always tacitly assume $\supp\meas = X$.

We assume the reader to be familiar with the notion of pointed measured Gromov Hausdorff convergence (pmGH-convergence for short), referring to \cite[Chapter 27]{Villani09} for an overview on the subject. 

\begin{remark}\label{remark:stability}
	A fundamental property of $\RCD(K,N)$ spaces, that will be used several times in this paper, is the stability w.r.t. pmGH convergence, meaning that a pmGH limit of a sequence of (pointed) $\RCD(K,N)$ spaces is still an $\RCD(K,N)$ m.m.s.. 
\end{remark}

We recall that any $\RCD(K,N)$ m.m.s. $(X,\dist,\meas)$ satisfies the Bishop-Gromov inequality:
\begin{equation}\label{prop:BishopGromovInequality}
\frac{\meas(B_R(x))}{v_{K,N}(R)}\le\frac{\meas(B_r(x))}{v_{K,N}(r)},
\end{equation}
for any $0<r<R$ and for any $x\in X$, where $v_{K,N}(r)\doteq N\omega_N\int_0^r \left(s_{K,N}(s)\right)^{N-1}\de s$ and
\begin{equation}\label{DefinitionSkn}
	s_{K,N}(r)\doteq
	\begin{cases}
	\sqrt{\frac{N-1}{K}}\sin\left(\sqrt{\frac{K}{N-1}}r\right) & \mbox{if} \quad K>0, \\
	r & \mbox{if} \quad K=0, \\
	\sqrt{\frac{N-1}{-K}}\sinh\left(\sqrt{\frac{-K}{N-1}}r\right) & \mbox{if} \quad K<0. \\
	\end{cases}
\end{equation}
In particular $(X,\dist,\meas)$ is locally uniformly doubling, that is to say, for any $R>0$ there exists $C_R>0$ such that
\begin{equation}\label{eq:locdoubling}
\meas(B_{2r}(x))\le C_R\meas(B_r(x))\quad \text{for any $x\in X$ and for any $0<r<R$.}
\end{equation}

%

We refer to \cite[Theorem 30.11]{Villani09} for the proof of the Bishop-Gromov inequality in the setting of metric measure spaces satisfying the curvature dimension condition.


\subsection{Non collapsed $\RCD(K,N)$ spaces}
Let us recall the definition of non collapsed $\RCD(K,N)$ m.m.s., as introduced in \cite{DePhilippisGigli18} (see also \cite{Kitabeppu17}, where Kitabeppu firstly investigated this class). 
\begin{definition}
	An $\RCD(K,N)$ metric measure space $(X,\dist,\meas)$ is said to be \textit{non collapsed} ($\ncRCD(K,N)$ for short) if $\meas=\mathcal{H}^N$, where $\mathcal{H}^N$ is the $N$-dimensional Hausdorff measure on $(X,\dist)$.
\end{definition}

Now we are ready to state the volume convergence theorem \cite[Theorem 1.2]{DePhilippisGigli18} in this setting and other definitions which will be useful for our aims.
\begin{theorem}[Volume convergence]\label{thm:volumeconvergence}
	Let $(X_n,\dist_n,\mathcal{H}^N,x_n)$ be a sequence of pointed $\ncRCD(K,N)$ m.m.s. with $K\in\mathbb{R}$ and $N\in [1,+\infty)$. Assume that $(X_n,\dist_n,x_n)$ converge in the pGH topology to $(X,\dist,x)$. Then precisely one of the following happens:
	\begin{itemize}
		\item[(a)] $\limsup_{n\to\infty}\mathcal{H}^N\left(B_1(x_n)\right)>0$. Then the $\limsup$ is a $\lim$ and $(X_n,\dist_n,\mathcal{H}^N,x_n)$ converge in the pmGH topology to $(X,\dist,\mathcal{H}^N,x)$;
		\item[(b)] $\lim_{n\to\infty}\mathcal{H}^N(B_1(x_n))=0$. In this case $\dim_{H}(X,\dist)\le N-1$. 
	\end{itemize}
\end{theorem}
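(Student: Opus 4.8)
The plan is to reduce the statement to the already-known volume convergence theorem for non collapsed Ricci limits — but since we are working intrinsically in the $\RCD$ category, the right strategy is to exploit the two basic structural facts available for $\ncRCD(K,N)$ spaces: the Bishop--Gromov inequality \eqref{prop:BishopGromovInequality} (which is scale-invariant monotonicity of $r\mapsto \mathcal{H}^N(B_r(x))/v_{K,N}(r)$) and lower semicontinuity of the Hausdorff measure of balls under pGH convergence, together with the converse upper semicontinuity that one gets whenever the limit space carries enough mass. First I would record that, along any pGH-convergent sequence of $\ncRCD(K,N)$ spaces, the functions $n\mapsto \mathcal{H}^N(B_r(x_n))$ are bounded above, uniformly in $n$ and locally uniformly in $r$, by $v_{K,N}(r)$ (this is Bishop--Gromov compared against the model at scale $0$, using $\lim_{r\to 0}\mathcal{H}^N(B_r)/\omega_N r^N\le 1$ — in fact $=1$ for $\ncRCD$). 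This already gives that $\limsup_n\mathcal{H}^N(B_1(x_n))<\infty$, so the dichotomy (a)/(b) is genuinely exhaustive.

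Next, in case (a), the core point is to upgrade pGH convergence to pmGH convergence with the Hausdorff measures converging. I would argue as follows. Passing to a subsequence realizing the $\limsup$, the measures $\mathcal{H}^N\measrestr B_R(x_n)$ are uniformly bounded in total mass (by the previous paragraph), hence up to a further subsequence they converge weakly$^*$, in the realization of the pGH convergence inside a common proper metric space, to some limit measure $\mu$ on $X$. By stability (\autoref{remark:stability}) and the characterization of $\RCD(K,N)$ via optimal transport, $(X,\dist,\mu)$ is an $\RCD(K,N)$ space; the real work is identifying $\mu=\mathcal{H}^N$. For the lower bound $\mu\ge \mathcal{H}^N$ I would use lower semicontinuity of the measure of open balls under weak$^*$ convergence together with the fact that, by Bishop--Gromov applied in the limit, $\mu(B_r(y))\ge c\, r^N$ at every $y\in\supp\mu$ for small $r$, combined with a covering/density argument (Vitali) showing $\mathcal{H}^N\measrestr\supp\mu\le \mu$; for the matching upper bound $\mu\le\mathcal{H}^N$ the key is the sharp asymptotic density bound $\Theta^N(\mathcal{H}^N,y)\le 1$ valid on any $\RCD(K,N)$ space (consequence of the rigidity in Bishop--Gromov, see \cite{DePhilippisGigli18}), which via the comparison theorem for densities forces $\mu\le \mathcal{H}^N$ on $\supp\mu$ and $\supp\mu=X$. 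Hence $\mu=\mathcal{H}^N$, the subsequence was arbitrary, so the full sequence converges in pmGH to $(X,\dist,\mathcal{H}^N,x)$ and in particular $\mathcal{H}^N(B_1(x_n))\to\mathcal{H}^N(B_1(x))>0$, turning the $\limsup$ into a $\lim$.

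Finally, case (b): if $\mathcal{H}^N(B_1(x_n))\to 0$, then by Bishop--Gromov every smaller ball has $\mathcal{H}^N(B_r(x_n))\to 0$ as well (the ratio $v_{K,N}(r)/v_{K,N}(1)$ is a fixed constant), so any weak$^*$ limit of $\mathcal{H}^N\measrestr B_R(x_n)$ is the zero measure. Intuitively the spaces are "collapsing", and one expects $\dim_H X\le N-1$. I would deduce this by contradiction: if $\dim_H(X,\dist)>N-1$ then there is a point $y\in X$ and a scale at which $\mathcal{H}^{N}(B_r(y))>0$ — more precisely one uses that an $\RCD(K,N)$ space has a well-defined "essential dimension", and if it equals $N$ then $\mathcal{H}^N$ is locally finite and positive, contradicting that the approximating balls have vanishing $\mathcal{H}^N$-measure together with a lower semicontinuity estimate transporting positive mass from $X$ back to the $X_n$. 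Here one must be careful: lower semicontinuity of $\mathcal{H}^N(B_r(\cdot))$ under pGH convergence is exactly what is needed, and it holds because $\mathcal{H}^N$ is obtained as a $\sup$ of measures of finite $\delta$-coverings which behave lower semicontinuously. The main obstacle throughout is the identification $\mu=\mathcal{H}^N$ in case (a): the inequality $\mu\le\mathcal{H}^N$ requires the sharp upper density bound $\Theta^N\le 1$, which in turn rests on the rigidity case of Bishop--Gromov, and assembling the covering arguments so that both inequalities hold $\mathcal{H}^N$-a.e. (and that $\supp\mu=X$) is the delicate part; everything else is soft functional-analytic compactness plus the stability of the $\RCD$ condition.
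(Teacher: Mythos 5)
The paper does not prove this theorem: it is quoted verbatim as \cite[Theorem 1.2]{DePhilippisGigli18}, so there is no internal argument to compare yours against. Judged on its own, your sketch has the right architecture for case (a) --- uniform mass bounds from Bishop--Gromov and $\vartheta_N\le 1$, extraction of a weak$^*$ limit $\mu$ in a common realization, stability of the $\RCD(K,N)$ condition, and identification of $\mu$ with $\mathcal{H}^N$ via density comparisons --- but the identification step, which you correctly flag as ``the real work'', is where the actual content of the theorem sits, and your proposed mechanisms do not close it. Concretely: Bishop--Gromov in the limit gives $\mu(B_r(y))\ge c(y)\,r^N$ with $c(y)$ proportional to $\mu(B_1(y))$, not with $c=\omega_N$; feeding this into the density comparison theorems yields only $\mu\ge c'\,\mathcal{H}^N\measrestr\supp\mu$ for some $c'$ that need not be $1$, and symmetrically the bound $\Theta^{*N}(\mu,\cdot)\le 1$ yields $\mu\le \mathcal{H}^N$ only up to the dimensional constants ($2^N$, spherical versus Hausdorff measure) inherent in those comparison theorems. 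You therefore obtain mutual absolute continuity with two-sided bounded density, i.e.\ $\mu=f\,\mathcal{H}^N$ with $c'\le f\le C$, but not $f\equiv 1$. Forcing $f\equiv1$ is precisely the Colding-type ``no volume is lost'' statement; in \cite{DePhilippisGigli18} it requires the lower semicontinuity of $\vartheta_N$ under pmGH convergence combined with the structure theory of $\RCD$ spaces (a.e.\ existence of Euclidean tangents of the top dimension, which is what lets one compute densities sharply along balls and removes the dimensional constants). None of the soft covering or semicontinuity arguments you list produces the exact constant.

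Case (b) has a genuine error as well: ``if $\dim_{H}(X,\dist)>N-1$ then there is a point $y$ and a scale with $\mathcal{H}^N(B_r(y))>0$'' is false ($\dim_H=N-\tfrac12$ is compatible with $\mathcal{H}^N\equiv 0$). The correct route is the one you gesture at afterwards --- normalize the measures, pass to a limit $\RCD(K,N)$ space, and argue on its essential dimension --- but concluding a contradiction from essential dimension $N$ requires transporting a definite volume lower bound back to the $X_n$ from an almost-Euclidean point of the limit, which is an $\eps$-regularity/almost-rigidity statement, not a consequence of the (correct but soft) lower semicontinuity $\mathcal{H}^N(K)\le\liminf_n\mathcal{H}^N(K_n)$ that you invoke. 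In short, the skeleton is sound but the two quantitatively sharp steps --- density exactly $1$ in case (a) and the volume lower bound near regular points in case (b) --- are exactly the nontrivial content of \cite[Theorem 1.2]{DePhilippisGigli18} and are not supplied by the proposal.
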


\begin{definition}[Metric cone]\label{def:metriccone}
	Given a metric space $(Z,\dist_Z)$ we define the metric cone $C(Z)$ over $Z$ to be the completion of $\setR^+\times Z$ endowed with metric
	\begin{equation*}
	\dist_C\left((r_1,z_1),(r_2,z_2)\right)^2\doteq \begin{cases}
	r_1^2+r_2^2-2r_1r_2\cos\left(\dist_Z(z_1,z_2)\right) & \text{if}\quad \dist(z_1,z_2)\le\pi,\\
	\left(r_1+r_2\right)^2 & \text{if}\quad \dist(z_1,z_2)\ge \pi.
	\end{cases}
	\end{equation*}
\end{definition}

Thanks to the Bishop-Gromov inequality \eqref{prop:BishopGromovInequality}, the following definition can be given, following \cite{DePhilippisGigli18}.
\begin{definition}[Bishop-Gromov density]
	Given $K\in \mathbb{R}$, $N\in [1,+\infty)$ and an $\RCD(K,N)$ space $(X,\dist,\meas)$, 
	for any $x\in X$ we let the Bishop-Gromov density at $x$ be defined by
	\begin{equation}\label{BishopGromovDensity}
	\vartheta_N[X,\dist,\meas](x)\doteq \lim_{r\to 0} \frac{\meas(B_r(x))}{v_{K,N}(r)}.
	\end{equation}
\end{definition}

\begin{remark}\label{RemarkOnBishopGromov}
	We can define the Bishop Gromov density in \eqref{BishopGromovDensity} substituting $v_{K,N}(r)$ with $\omega_Nr^N$ at the denominator in \eqref{BishopGromovDensity} since
	\begin{equation}\label{LimitOfVKN}
	\lim_{r\to 0} \frac{\omega_N r^N}{v_{K,N}(r)} = 1.
	\end{equation}
\end{remark}

\begin{remark}\label{DensityLessThanOne}
	In \cite[Corollary 2.14]{DePhilippisGigli18} it is proved that if $(X,\dist,\mathcal{H}^N)$ is a $\ncRCD(K,N)$ space, with $K\in \mathbb{R}$ and $N\in [1,+\infty)$, then we have
	\begin{equation}
	\vartheta_N[X,\dist,\mathcal{H}^N](x) \leq 1 \qquad \text{for any } x\in X.
	\end{equation} 
	This follows from general results about differentiation of measures jointly with the lower semicontinuity of $\vartheta_N$, see \cite[Lemma 2.2]{DePhilippisGigli18}.
\end{remark}

\subsection{Almost volume cone implies almost metric cone}

It is possible to prove a rigidity result about Bishop-Gromov inequality in $\RCD(0,N)$ spaces which, roughly speaking, tells us that if we have equality of Bishop-Gromov ratios at two different radii then, at a certain scale, the space is isometric to a metric cone. This result is proven in \cite[Theorem 1.1, Theorem 4.1]{DePhilippisGigli16} in the case of $\RCD(0,N)$ and $\RCD(K,N)$ spaces respectively but we will state (part of) it here only in the case $K=0$.

\begin{theorem}[Volume cone implies metric cone]\label{FromVolumeConeToMetricCone}Let $N\in (0,+\infty)$ and $(X,\dist,\meas)$ be an $\RCD(0,N)$ space.
	Suppose there exist $x\in X$ and $R>r>0$ such that
	\begin{equation}
	\frac{\meas(B_R(x))}{R^n} = \frac{\meas(B_r(x))}{r^n}.
	\end{equation}
	Then, if the sphere $S_{\frac{R}{2}}(x)$ contains at least 3 points, we conclude that $N\geq 2$ and that there exists $\left(Z,\dist_z,\meas_z\right)$ an $RCD(N-2,N-1)$ space with $\diam Z \leq \pi$ such that the closed ball $\bar{B}_{\frac{R}{2}}(x)$ is isometric to the closed ball $\bar{B}_{\frac{R}{2}}(z)$ in the metric cone built over $Z$, where $z$ is the tip of the cone. This isometry sends $x$ to $z$.
	
	If the sphere $S_{\frac{R}{2}}(x)$ contains two points, then $\bar{B}_{\frac{R}{2}}(x)$ is isometric to $\left[-\frac{R}{2},\frac{R}{2}\right]$ with an isometry which sends $x$ to 0 while if the sphere $S_{\frac{R}{2}}(x)$ contains one point  $\bar{B}_{\frac{R}{2}}(x)$ is isometric to $\left[0,\frac{R}{2}\right]$ with an isometry which takes $x$ to 0.
\end{theorem}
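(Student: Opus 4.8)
The plan is to transplant the classical Cheeger--Colding rigidity argument into the $\RCD$ setting, using the second-order calculus of \cite{Gigli15} together with the sharp Laplacian comparison and the self-improved Bochner inequality available on $\RCD(0,N)$ spaces. Write $r_x(\cdot)\doteq\dist(x,\cdot)$; on a metric cone with tip $x$ one has $\Delta(r_x^2/2)=N$ and $\abs{\nabla r_x}=1$, so the first task is to derive these identities from the volume hypothesis. By the Bishop--Gromov inequality \eqref{prop:BishopGromovInequality} the ratio $s\mapsto\meas(B_s(x))/s^N$ is non-increasing, hence the assumption forces it to be \emph{constant} on $[r,R]$; the measure-valued Laplacian comparison gives $\Delta(r_x^2/2)\le N\meas$ away from $x$, and pairing this with radial test functions and applying the coarea formula identifies the total mass of the nonnegative defect $N\meas-\Delta(r_x^2/2)$ with the decrement of the Bishop--Gromov ratio; constancy therefore kills the defect on the open annulus. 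To promote this to an identity on a genuine ball $\bar B_{R/2}(x)$ around the vertex, one works not with $r_x$ itself but with a potential $b$ built from the Green function of $\Delta$ with pole at $x$ (a power of $r_x$ in the model), whose very construction encodes the geometry at small scales and to which the same computation applies up to $x$. This is the technical heart of the argument, and it is where the radius $R/2$ enters — as an inner region on which $b$ is controlled away from $\partial B_R(x)$ — and where the trichotomy on the cardinality of $S_{R/2}(x)$ originates, the construction of $b$ degenerating when $N<2$ or when the space is one-dimensional.

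Granting that $f\doteq b^2/2$ (equivalently $r_x^2/2$) satisfies $\Delta f=N$ and $\abs{\nabla r_x}=1$ a.e.\ on $\bar B_{R/2}(x)$, the next step upgrades this to the infinitesimal cone equation ``$\Hess f$ equals the metric tensor''. Since $r_x$ is $1$-Lipschitz one has $\abs{\nabla f}^2=r_x^2\abs{\nabla r_x}^2\le 2f$, so $h\doteq 2f-\abs{\nabla f}^2$ is nonnegative and vanishes at $x$. The self-improved Bochner inequality gives $\tfrac12\Delta\abs{\nabla f}^2\ge\abs{\Hess f}^2+\nabla f\cdot\nabla(\Delta f)\ge(\Delta f)^2/N=N$ as measures (the middle term vanishing because $\Delta f\equiv N$), whereas $\tfrac12\Delta(2f)=\Delta f=N$; hence $\Delta h\le 0$. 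Thus $h$ is superharmonic, nonnegative, and vanishing at $x$, and the strong maximum principle forces $h\equiv 0$, i.e.\ $\abs{\nabla f}^2=2f$. Then every inequality in the Bochner chain is an equality; in particular $\abs{\Hess f}^2=(\Delta f)^2/N$ is the equality case of the trace Cauchy--Schwarz inequality, which forces $\Hess f$ to equal $\tfrac{\Delta f}{N}$ times the metric tensor, i.e.\ the metric tensor itself, and along the way $b=r_x$.

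The final step converts this pointwise identity into a genuine metric cone structure on $\bar B_{R/2}(x)$. From the identities $\Hess f=\text{(metric tensor)}$ and $\abs{\nabla r_x}=1$ the vector field $\nabla\log r_x$ generates a well-defined gradient flow on $\bar B_{R/2}(x)\setminus\{x\}$, and its suitably reparametrised flow map identifies $\bar B_{R/2}(x)\setminus\{x\}$ isometrically with $(0,R/2]\times S_{R/2}(x)$ carrying the cone distance of \autoref{def:metriccone}, with $x$ corresponding to the tip; the cross-section $(Z,\dist_Z,\meas_Z)$ is then a rescaled level set of $r_x$, its measure being recovered from the volume growth. That $(Z,\dist_Z,\meas_Z)$ must be $\RCD(N-2,N-1)$ with $\diam Z\le\pi$ is exactly the characterisation of when $C(Z)$ is $\RCD(0,N)$, read backwards; this requires $N-1\ge 1$, hence $N\ge 2$, while if $S_{R/2}(x)$ has two (resp.\ one) points so does the cross-section and the cone collapses to $[-R/2,R/2]$ (resp.\ $[0,R/2]$), yielding the degenerate alternatives. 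I expect the main obstacle to be the first step: making rigorous, \emph{simultaneously and up to the vertex}, the measure-valued Laplacian comparison, the Green-function potential $b$, and the analysis of their equality case, so as to extract from the volume equality the pointwise conclusions $\Delta f=N$ and $\abs{\nabla r_x}=1$ on a full ball — the part that has no smooth-manifold shortcut and where the weak calculus of \cite{Gigli15} and the self-improvement of the Bochner inequality carry the load.
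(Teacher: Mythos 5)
First, a point of context: the paper gives no proof of \autoref{FromVolumeConeToMetricCone} --- it is quoted from \cite[Theorems 1.1 and 4.1]{DePhilippisGigli16} --- so your proposal is being measured against that reference rather than against an argument in the text. Your overall architecture (volume equality $\Rightarrow$ $\Delta(\dist^2(\cdot,x)/2)=N\meas$ and $\abs{\nabla \dist(\cdot,x)}=1$ $\Rightarrow$ self-improved Bochner forces $\Hess(\dist^2(\cdot,x)/2)$ to be the metric tensor $\Rightarrow$ radial flow reconstructs the cone) is indeed the Cheeger--Colding/De Philippis--Gigli philosophy, and your middle step (superharmonicity of $2f-\abs{\nabla f}^2$ plus the minimum principle, then equality in the trace Cauchy--Schwarz inequality) is essentially sound modulo the usual care needed to localize the measure-valued Bochner inequality to a ball.

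The genuine gap sits exactly where you flag ``the technical heart'': passing from the annulus to the ball. The hypothesis controls the Bishop--Gromov ratio only on $[r,R]$, so your pairing argument kills the defect $N\meas-\Delta(\dist^2(\cdot,x)/2)$ only on $\{r<\dist(\cdot,x)<R\}$, whereas the conclusion concerns $\bar B_{R/2}(x)$, which contains the vertex and, when $r<R/2$, a region about which the hypothesis says nothing directly. Your proposed fix --- a Green-function potential $b$ ``to which the same computation applies up to $x$'' --- is not substantiated and does not obviously work: the ``same computation'' identifies the Laplacian defect with the decrement of a monotone quantity, and near the vertex there is no assumed decrement to exploit; moreover, Green-function monotonicity formulas naturally tie the scale-$R$ behaviour to the density at $x$, not to the ratio at the intermediate radius $r$. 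The mechanism in \cite{DePhilippisGigli16} is different: the Bishop--Gromov inequality is the trace of a convexity inequality along the $W_2$-geodesic contracting the normalized restriction of $\meas$ to $B_R(x)$ onto $\delta_x$, and a convex function touching its chord at one interior time must coincide with it at all times; this propagates the volume-cone identity to every radius in $(0,R]$ and in fact produces measure-preserving radial dilation maps on the whole ball, which is what makes the subsequent functional identities hold up to the vertex. Without this (or the smooth-case substitute, backward rigidity in the Riccati comparison along geodesics from $x$) your first step never reaches the vertex and the rest of the argument has nothing to stand on. Separately, your final step --- that the reparametrised radial flow is an isometry onto a ball of a cone, that the induced cross-section distance satisfies the formula of \autoref{def:metriccone} including the $\pi$-truncation, and that $Z$ is $\RCD(N-2,N-1)$ with $\diam Z\le\pi$ (which requires first extending the local cone structure to a bona fide $\RCD(0,N)$ cone before invoking the characterisation of such cones) --- is asserted in one sentence but constitutes the bulk of \cite{DePhilippisGigli16}; as written it is a statement of the goal rather than a proof, and it is also where the trichotomy on $\#S_{R/2}(x)$ and the conclusion $N\ge 2$ actually have to be established.
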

\begin{remark}
	If $X$ is a Riemannian manifold with metric $g$, $\Ric \geq K$ and $\dim =N$, the existence of $x\in X$ and $R>r>0$ such that
	\begin{equation}\label{EqualityHere}
	\frac{\meas(B_R(x))}{v_{K,N}(R)} = \frac{\meas(B_r(x))}{v_{K,N}(r)}
	\end{equation}
	implies that the ball $B_R(x)$ with the Riemannian metric is isometric (in the Riemannian sense) to the ball $B_R(0)$ in the model with metric $g^{K,N}\doteq \de r^2 + s_{K,N}(r)^2g_{\mathbb{S}^{N-1}}$, where $s_{K,N}$ is defined in \eqref{DefinitionSkn} and $g_{\mathbb{S}^{N-1}}$ is the standard metric on $\mathbb{S}^{N-1}$. In the case $K=0$ the distance induced from the Riemannian metric $g^{0,N}=\de r^2+r^2g_{\mathbb{S}^{N-1}}$ is the cone distance introduced in \autoref{def:metriccone} if we take $\left(\mathbb{S}^{N-1},\dist_{g_{\mathbb{S}^{N-1}}}\right)$ as base space. It is important to note that in general this Riemannian isometry implies that the two balls are only locally isometric and the Riemannian isometry could  not extend to a metric isometry, which is the reason why in the statement of the previous theorem we have $\frac{R}{2}$ instead of $R$. 

	To see that in general the Riemannian isometry given by the rigidity in the Riemannian case does not extend to a global isometry, consider a cylinder in $\mathbb{R}^3$ with sections of diameter 1. Then take a point $x$ on it and a ball of radius $R=1$ centered at $x$. Even if \eqref{EqualityHere} holds with any $r<1$, $K=0$ and $N=2$ and the cylinder is a flat surface in $\mathbb{R}^3$ it is simple to see that the ball $\bar{B}_1(x)$ is not isometric to the euclidean ball $\bar{B}_1(0)$ in $\mathbb{R}^2$. 
\end{remark}
The previous rigidity theorem gives the possibility to deduce, arguing by compactness, an almost rigidity theorem. In fact Cheeger and Colding proved in \cite{CheegerColding96} a result of this flavour: if in a Riemannian manifold with a bound from below on Ricci curvature the Bishop-Gromov ratios at two radii $R$ and $r$ are almost equal, then the closed ball of radius $\frac{R}{2}$ in the manifold is close, in the sense of Gromov-Hausdorff distance, to the closed ball of radius $\frac{R}{2}$ around the tip of a suitably chosen metric cone. 

We can now rephrase and prove this result in the non smooth context, arguing by compactness and using \autoref{FromVolumeConeToMetricCone}.
\begin{theorem}[Almost volume cone implies almost metric cone - nc version]\label{FromAlmostVolumeCone}
	Given $\varepsilon>0$, $0<\eta<1$, $v>0$, $K\in \mathbb{R}$ and $N\in[2,+\infty)$, there exists $0<\delta\doteq \delta(K,N,\eta,v,\varepsilon)<1$ such that the following holds. If $\left(X,\dist,\mathcal{H}^N\right)$ is a $\ncRCD(K,N)$ space satisfying
	\begin{equation}
	\frac{\mathcal{H}^N(B_1(x))}{v_{K,N}(1)} \geq v >0,
	\end{equation}
	such that there exist $\delta>R>r>0$,
	with $\frac{r}{R}=\eta$, and $x\in X$ satisfying
	\begin{equation}
	\frac{\mathcal{H}^N(B_R(x))}{v_{K,N}(R)} \geq (1-\delta)\frac{\mathcal{H}^N(B_r(x))}{v_{K,N}(r)}, 
	\end{equation}
	then there exists $\left(Z,\dist_z,\meas_z\right)$ an $\RCD(N-2,N-1)$ space with $\diam Z \leq \pi$ such that, being $\bar{B}_{\frac{R}{2}}(z)$ the closed ball of radius $\frac{R}{2}$ around the tip $z$ of the metric cone built over $Z$, then
	\begin{equation}\label{Conclusion}
	\dist_{GH}\left(\bar{B}_{\frac{R}{2}}(x),\bar{B}_{\frac{R}{2}}(z)\right) \leq \varepsilon R.
	\end{equation}
\end{theorem}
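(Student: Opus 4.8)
The plan is to argue by contradiction through a blow-up and compactness argument, reducing the assertion to the rigidity statement \autoref{FromVolumeConeToMetricCone}. Assume the thesis fails for some $\varepsilon>0$, $0<\eta<1$, $v>0$, $K\in\setR$ and $N\in[2,+\infty)$. Then for every $n$ there is an $\ncRCD(K,N)$ space $(X_n,\dist_n,\mathcal{H}^N)$, a point $x_n\in X_n$ with $\mathcal{H}^N(B_1(x_n))\ge v\,v_{K,N}(1)$, and radii $\tfrac1n>R_n>r_n>0$ with $r_n/R_n=\eta$, such that
\[
\frac{\mathcal{H}^N(B_{R_n}(x_n))}{v_{K,N}(R_n)}\ge\Big(1-\tfrac1n\Big)\frac{\mathcal{H}^N(B_{r_n}(x_n))}{v_{K,N}(r_n)},
\]
but no $\RCD(N-2,N-1)$ space with diameter $\le\pi$ realizes \eqref{Conclusion} at scale $R_n$. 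In particular $R_n\to0$. I would then pass to the rescaled pointed spaces $(X_n,R_n^{-1}\dist_n,\mathcal{H}^N,x_n)$, where $\mathcal{H}^N$ now denotes the Hausdorff measure of the rescaled distance (which is $R_n^{-N}$ times the old one); these are again non collapsed, and precisely $\ncRCD(KR_n^2,N)$, with $KR_n^2\to0$.

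Next I would rule out collapse. By the Bishop--Gromov inequality \eqref{prop:BishopGromovInequality} the ratio $s\mapsto\mathcal{H}^N(B_s(x_n))/v_{K,N}(s)$ is non-increasing, so $\mathcal{H}^N(B_{R_n}(x_n))/v_{K,N}(R_n)\ge v$; since $v_{K,N}(R_n)/(\omega_NR_n^N)\to1$ (see \autoref{RemarkOnBishopGromov}), the unit ball centered at $x_n$ in the rescaled metric has measure bounded below by a positive constant for $n$ large. Combining this with the local uniform doubling \eqref{eq:locdoubling}, Gromov precompactness, stability of the $\RCD$ condition (\autoref{remark:stability}) and the volume convergence \autoref{thm:volumeconvergence}, up to a subsequence the rescaled spaces converge in the pmGH topology --- we are in case $(a)$, as the rescaled unit-ball volumes stay bounded away from $0$ --- to a pointed $\ncRCD(0,N)$ space $(X_\infty,\dist_\infty,\mathcal{H}^N,x_\infty)$.

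Now I would transfer the almost-monotonicity to the limit. In the rescaled space the hypothesis reads $\mathcal{H}^N(B_1(x_n))/v_{KR_n^2,N}(1)\ge(1-\tfrac1n)\,\mathcal{H}^N(B_\eta(x_n))/v_{KR_n^2,N}(\eta)$, which together with the genuine monotonicity coming from \eqref{prop:BishopGromovInequality} squeezes, for all $\eta\le a<b\le1$, the difference of the Bishop--Gromov ratios at $a$ and $b$ to $0$ as $n\to\infty$ (their quotient lies between $1$ and $(1-\tfrac1n)^{-1}$, and they are bounded above by $1$ by \autoref{DensityLessThanOne} and below by the lower bound just obtained). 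Using the weak convergence of the rescaled measures, it follows that the Bishop--Gromov ratio $s\mapsto\mathcal{H}^N(B_s(x_\infty))/(\omega_Ns^N)$ of the limit is constant on $(\eta,1)$ and equals its value at $s=1$; in particular it takes the same value at radius $1$ and at some $r\in(\eta,1)$. Applying \autoref{FromVolumeConeToMetricCone} with $R=1$ and this $r$ --- and observing that the sphere $S_{1/2}(x_\infty)$ cannot have one or two points, since otherwise $\bar{B}_{1/2}(x_\infty)$ would be isometric to an interval and hence $\mathcal{H}^N$-negligible (recall $N\ge2$), contradicting the uniform positive lower bound on $\mathcal{H}^N(B_{1/2}(x_\infty))$ --- we obtain an $\RCD(N-2,N-1)$ space $Z$ with $\diam Z\le\pi$ such that $\bar{B}_{1/2}(x_\infty)$ is isometric to the closed ball $\bar{B}_{1/2}(z)$ around the tip $z$ of the metric cone $C(Z)$.

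Finally I would undo the rescaling. pmGH convergence of the pointed spaces yields GH convergence $\bar{B}_{1/2}(x_n)\to\bar{B}_{1/2}(x_\infty)$ in the rescaled metric, so for $n$ large $\dist_{GH}(\bar{B}_{1/2}(x_n),\bar{B}_{1/2}(z))\le\varepsilon$ there. Multiplying the metrics back by $R_n$ --- under which $\bar{B}_{1/2}(z)$ in $C(Z)$ becomes exactly $\bar{B}_{R_n/2}(z)$ in $C(Z)$, while $\dist_{GH}$ scales by $R_n$ --- gives $\dist_{GH}(\bar{B}_{R_n/2}(x_n),\bar{B}_{R_n/2}(z))\le\varepsilon R_n$, contradicting the choice of the sequence. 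The main obstacle is the non-collapsing/measure-convergence step: one has to guarantee that the blow-up sequence stays in case $(a)$ of \autoref{thm:volumeconvergence}, so that the limit carries $\mathcal{H}^N$ and the Bishop--Gromov ratios genuinely pass to the limit, and, closely related, to exclude --- using the uniform lower volume bound together with $N\ge2$ --- the low-dimensional degenerate outcomes of \autoref{FromVolumeConeToMetricCone}. The remaining ingredients (the asymptotics of $v_{K,N}$ as $r\to0$, GH convergence of closed balls, the scaling behaviour of metric cones) are routine.
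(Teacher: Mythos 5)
Your proposal is correct and follows essentially the same route as the paper: a contradiction argument via rescaling by $R_n$, a uniform two-sided bound on the volume of the rescaled unit balls to stay in case $(a)$ of \autoref{thm:volumeconvergence}, passage to a $\ncRCD(0,N)$ limit where the Bishop--Gromov ratios become equal, exclusion of the degenerate cases of \autoref{FromVolumeConeToMetricCone} because $N\ge 2$, and then undoing the scaling to contradict the assumed lower bound on the Gromov--Hausdorff distance. The minor variations (applying the rigidity at some $r\in(\eta,1)$ rather than at $r=\eta$, and ruling out the interval cases via positivity of $\mathcal{H}^N(B_{1/2}(x_\infty))$ instead of $\dim_H X_\infty=N>1$) are immaterial.
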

\begin{proof}	
	Suppose by contradiction that there exist $\varepsilon_0$, a sequence $\delta_i\to 0$, $\left(X_i,\dist_i,\mathcal{H}^N_{\dist_i},x_i\right)$ which are $\ncRCD(K,N)$ spaces and $\delta_i>R_i>r_i>0$ radii with $\frac{r_i}{R_i}=\eta$ such that
	\begin{equation}\label{BishopReversed}
	\frac{\mathcal{H}^N_{\dist_i}(B_{R_i}(x_i))}{v_{K,N}(R_i)} \geq (1-\delta_i)\frac{\mathcal{H}^N_{\dist_i}(B_{r_i}(x_i))}{v_{K,N}(r_i)},
	\end{equation}
	and with
	\begin{equation}\label{GreaterThanEpsilon0}
	\dist_{GH}\left(\bar{B}_{\frac{R_i}{2}}(x_i),\bar{B}_{\frac{R_i}{2}}(z)\right) > \varepsilon_0R_i
	\end{equation}
	for each $\bar{B}_{\frac{R_i}{2}}(z)$ closed ball of radius $\frac{R_i}{2}$ around the tip $z$ of any metric cone built over $Z$, an $\RCD(N-2,N-1)$ space with $\diam Z\leq \pi$. If we suitably rescale the metric on these spaces
	\begin{equation*}
	\left(X_i',\dist_i',\meas_i',x_i'\right)\doteq \left(X_i,R_i^{-1}\dist_i,R_i^{-N}\mathcal{H}^N_{\dist_i},x_i\right),
	\end{equation*}
	then $(X_i',\dist_i',\meas_i',x_i')$ is a $\ncRCD(KR_i^2,N)$ space and $\meas_i'=\mathcal{H}^N_{\dist_i'}$. Read in these spaces \eqref{BishopReversed} becomes
	\begin{equation}\label{BishopReversed2}
	\frac{\mathcal{H}^N_{\dist_i'}(B_{1}'(x_i'))}{v_{K,N}(R_i)} \geq (1-\delta_i)\frac{\mathcal{H}^N_{\dist_i'}(B_{\eta}'(x_i'))}{v_{K,N}(r_i)},
	\end{equation}
	while \eqref{GreaterThanEpsilon0} tells us that
	\begin{equation}\label{GreaterThanEpsilon02}
	\dist_{GH}\left(\bar{B}'_{\frac{1}{2}}(x_i'),\bar{B}_{\frac{1}{2}}(z)\right) > \varepsilon_0
	\end{equation}
	for each $\bar{B}_{\frac{1}{2}}(z)$ closed ball of radius $\frac{1}{2}$ around the tip $z$ of any metric cone built over an $\RCD(N-2,N-1)$ space $Z$ with $\diam Z\leq \pi$. Here we tacitly exploited the fact that a metric cone is isometric to any rescaling of itself with center in the tip. 
	
	We also know that, there exist $C>0$ and $c>0$ depending only on $K$ and $N$ such that
	\begin{equation}\label{BoundOnVolume}
	C\geq\mathcal{H}^N_{\dist_i'}\left(B_1'(x_i')\right) = \frac{\mathcal{H}^N_{\dist_i}\left(B_{R_i}(x_i)\right)}{R_i^N}=\frac{\mathcal{H}^N_{\dist_i}\left(B_{R_i}(x_i)\right)}{v_{K,N}(R_i)}\frac{v_{K,N}(R_i)}{R_i^N}\geq vc
	\end{equation}
	because of the non collapsed condition \eqref{VolumeBound}, the bound on the density \eqref{DensityLessThanOne} and the fact that, for $0<R_i<1$, $\frac{v_{K,N}(R_i)}{R_i^N}$ is bounded uniformly from above and below. By compactness we have that, up to subsequences, 
	\begin{equation}
	\left(X_i',\dist_i',\mathcal{H}^N_{\dist_i'},x_i'\right) \to (X_{\infty},\dist_{\infty},\meas_{\infty},x_{\infty})
	\end{equation}
	where $(X_{\infty},\dist_{\infty},\meas_{\infty},x_{\infty})$ is a $\ncRCD(0,N)$ space as a consequence of the volume convergence (see \autoref{thm:volumeconvergence}) and \eqref{BoundOnVolume}. Passing to the limit \eqref{BishopReversed2}, taking into account that $\frac{v_{K,N}(R_i)}{v_{K,N}(r_i)}\to \frac{1}{\eta^N}$ and the Bishop-Gromov inequality in $X_{\infty}$  we obtain
	\begin{equation}
	\frac{\meas_{\infty}(B_1(x_{\infty}))}{\omega_N} = \frac{\meas_{\infty}(B_{\eta}(x_{\infty}))}{\omega_N\eta^N}.
	\end{equation}
	Since $\dim_H\left( X_{\infty} \right)=N>1$ we can exclude the degeneracy cases in \autoref{FromVolumeConeToMetricCone}, thus we obtain the existence of $\left(Z,\dist_z,\meas_z\right)$ an $\RCD(N-2,N-1)$ space with $\diam Z\leq \pi$ such that $\bar{B}_{\frac{1}{2}}(x_{\infty})$ is isometric to the closed ball $\bar{B}_{\frac{1}{2}}(z)$ in the metric cone built over $Z$, where $z$ is the tip of the cone. Then
	\begin{equation}
	\dist_{GH}\left(\bar{B}_{\frac{1}{2}}(x_{\infty}),\bar{B}_{\frac{1}{2}}(z)\right)=0
	\end{equation}
	which contradicts \eqref{GreaterThanEpsilon02}.
\end{proof}
\begin{remark}
	With the same proof, when we work in the class of $\ncRCD(0,N)$ spaces, we obtain the same statement as before with the constraint $R<1$ instead of $R<\delta$.
\end{remark}

\subsection{Almost cone splitting}\label{subsec:almostcone splitting}

\begin{definition}\label{def:GHequivalence}
	Given metric spaces $(X,\dist_X)$ and $(Y,\dist_Y)$, we say that $f:X\to Y$ is an $\varepsilon$-GH equivalence if
	$|\dist_Y(f(x_1),f(x_2))-\dist_X(x_1,x_2)|\leq \varepsilon$ for all $x_1,x_2\in X$, and for all $y\in Y$ there exists $x\in X$ such that $\dist_Y(f(x),y) \leq \varepsilon$.
\end{definition}

\begin{definition}\label{def:Conicality}
	Given a metric space $(X,\dist)$, we define the $t$-conicality of the ball $B_r(x)$ as
	\begin{equation}
	\mathcal{N}_t(B_r(x)) \doteq \inf_{\varepsilon > 0} \left\{ \exists \ Z \  \mbox{metric cone  and $\RCD(0,N)$ with tip $z$} \ \mbox{s.t.} \ \dist_{GH}\left(\bar{B}_{\frac{tr}{2}}(x),\bar{B}_{\frac{tr}{2}}(z)\right) \leq \frac{\varepsilon r }{2} \right\}.
	\end{equation}
\end{definition}

\begin{definition}\label{def:ConicalSet}
	Following \cite{CheegerNaber13a} we define the \emph{ $\varepsilon-(t,r)$ conical set} in $B_{\frac{1}{2}}(x_0)$ as
	\begin{equation}
	C^{\varepsilon}_{t,r} \doteq \{x\in B_{\frac{1}{2}}(x_0) : \mathcal{N}_t(B_r(x))< \eps\},
	\end{equation}
	where $\mathcal{N}$ is defined in \eqref{def:Conicality}. 
\end{definition}

\begin{theorem}[Cone splitting, quantitative version]\label{thm:conesplittingquant}
	For all $K\in \mathbb{R}$, $N\in [2,+\infty)$, $0<\gamma<1$, $\delta<\gamma^{-1}$, and for all $\tau,\psi>0$ there exist $0<\varepsilon(N,K,\gamma,\delta,\tau,\psi)<\psi$ and $0<\theta=\theta(N,K,\gamma,\delta,\tau,\psi)$ such that the following holds. Let $(X,\dist,\meas)$ be an $\RCD(K,N)$ m.m.s., $x\in X$ and $r\le\theta$ be such that there exists an $\varepsilon r$-GH equivalence
	\begin{equation*}
	F:B_{\gamma^{-1}r}\left((0,z^*)\right)\to B_{\gamma^{-1}r}(x)
	\end{equation*}
	for some cone $\setR^l\times C(Z)$, with $(Z,\dist_Z,\meas_Z)$ an $\RCD(N-l-2,N-l-1)$ m.m.s..
	If there exists 
	\begin{equation*}
	x'\in B_{\delta r}(x)\cap C^{\varepsilon}_{\gamma^{-N},\delta r}
	\end{equation*}
	with 
	\begin{equation*}
	x'\notin T_{\tau r}\left(F(\setR^l\times\left\lbrace z^*\right\rbrace )\right)\cap B_r(x),
	\end{equation*}
	where $T_{\tau r}(\cdot)$ is the tubular neighbourhood of radius $\tau r$, then for some cone $\setR^{l+1}\times C(\tilde{Z})$, where  $(\tilde{Z},\dist_{\tilde{Z}},\meas_{\tilde{Z}})$ is a $\RCD(N-l-3,N-l-2)$ m.m.s.,
	\begin{equation*}
	\dist_{GH}\left(B_r(x), B_r((0,\tilde{z}^*))\right)<\psi r.
	\end{equation*}  
\end{theorem}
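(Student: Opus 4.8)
The plan is to argue by contradiction and blow‑up, following the scheme of Cheeger and Naber. Suppose the statement fails for some choice of $K,N,\gamma,\delta,\tau,\psi$. Then there are $\varepsilon_i\downarrow 0$, $\theta_i\downarrow 0$ and, for each $i$, an $\RCD(K,N)$ space $(X_i,\dist_i,\meas_i)$, a point $x_i$, a radius $r_i\le\theta_i$, an $\varepsilon_i r_i$‑GH equivalence $F_i\colon B_{\gamma^{-1}r_i}((0,z_i^*))\to B_{\gamma^{-1}r_i}(x_i)$ for a cone $\setR^l\times C(Z_i)$ with $Z_i$ an $\RCD(N-l-2,N-l-1)$ m.m.s., and a point $x_i'\in B_{\delta r_i}(x_i)\cap C^{\varepsilon_i}_{\gamma^{-N},\delta r_i}$ with $\dist_i(x_i',F_i(\setR^l\times\{z_i^*\}))>\tau r_i$, but such that $\dist_{GH}(B_{r_i}(x_i),B_{r_i}((0,\tilde z^*)))\ge\psi r_i$ for every cone $\setR^{l+1}\times C(\tilde Z)$ over an $\RCD(N-l-3,N-l-2)$ space of diameter $\le\pi$. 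Rescaling the metrics by the factor $r_i^{-1}$ and normalising $\meas_i$ so that the new unit ball has mass one, each space becomes $\RCD(Kr_i^2,N)$ with $Kr_i^2\to 0$; the GH equivalence is now over radius $\gamma^{-1}$, and, unwinding \autoref{def:Conicality}--\autoref{def:ConicalSet}, $\bar B_{\gamma^{-N}\delta/2}(x_i')$ is $(\varepsilon_i\delta/2)$‑GH close to a ball of equal radius around the tip of an $\RCD(0,N)$ metric cone.

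Next I would pass to the limit. By the local uniform doubling property \eqref{eq:locdoubling} and Gromov pre‑compactness, along a subsequence $(X_i,r_i^{-1}\dist_i,\meas_i,x_i)\to(X_\infty,\dist_\infty,\meas_\infty,x_\infty)$ in the pmGH topology, and the limit is an $\RCD(0,N)$ space by stability (\autoref{remark:stability}); likewise the links $Z_i$ and the $\RCD(0,N)$ cones witnessing the conicality of the $x_i'$ converge, to an $\RCD(N-l-2,N-l-1)$ space $Z_\infty$ with $\diam\le\pi$ and to an $\RCD(0,N)$ metric cone respectively (a GH limit of cone balls with consistent dilations is again a cone ball). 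Since the GH errors tend to $0$, passing $F_i$ to the limit shows that $B_{\gamma^{-1}}(x_\infty)$ is isometric to $B_{\gamma^{-1}}((0,z_\infty^*))$ inside $\setR^l\times C(Z_\infty)$, and $\bar B_{\gamma^{-N}\delta/2}(x_\infty')$ is isometric to a ball around the tip of an $\RCD(0,N)$ cone, where $x_\infty'=\lim x_i'$ satisfies $\dist_\infty(x_\infty',x_\infty)\le\delta$ and, because $F_i(\setR^l\times\{z_i^*\})$ converges to the spine $\setR^l\times\{z_\infty^*\}$, $\dist_\infty(x_\infty',\setR^l\times\{z_\infty^*\})\ge\tau>0$. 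Thus $X_\infty$ is an $\RCD(0,N)$ space that is, near $x_\infty$, simultaneously a metric cone with $l$ split Euclidean factors centred at $x_\infty$ and a metric cone centred at the genuinely off‑spine point $x_\infty'\neq x_\infty$.

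The heart of the matter is the rigid cone‑splitting principle: an $\RCD(0,N)$ space which is a metric cone with respect to two distinct vertices splits off one more Euclidean factor, along the segment joining the vertices. Since only local (ball‑wise) cone information is available, I would extend the geodesic $[x_\infty,x_\infty']$ to a ray using the cone structure at $x_\infty$ (its direction has a non‑trivial component transverse to $\setR^l$ precisely because $x_\infty'$ lies off the spine) and extend it on the other side using the cone structure at $x_\infty'$; the two ``vertex'' conditions on $\dist_\infty(x_\infty,\cdot)$ and $\dist_\infty(x_\infty',\cdot)$ — i.e.\ the Laplacian/Hessian rigidity underpinning \autoref{FromVolumeConeToMetricCone} — should force this concatenation to be a line meeting a neighbourhood of $x_\infty$. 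The splitting theorem for $\RCD(0,N)$ spaces then yields an isometric $\setR$‑factor which, being transverse to the pre‑existing $\setR^l$, upgrades the splitting to $\setR^{l+1}$; hence $B_1(x_\infty)$ is isometric to a ball in $\setR^{l+1}\times C(\tilde Z_\infty)$ for some $\RCD(N-l-3,N-l-2)$ space $\tilde Z_\infty$ of diameter $\le\pi$. Since $B_1(x_i)\to B_1(x_\infty)$ in GH, this forces $\dist_{GH}(B_1(x_i),B_1((0,\tilde z^*)))\to 0$, contradicting the choice of the $x_i$'s.

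The step I expect to be the main obstacle is exactly this rigid cone‑splitting in its localised form: one must track the radii so that the ray issuing from $x_\infty'$ — available only on a ball of radius $\gamma^{-N}\delta/2$, which is why the large exponent $\gamma^{-N}$ appears — combines with the cone structure on $B_{\gamma^{-1}}(x_\infty)$ to produce a genuine line across $B_1(x_\infty)$, and one must verify the transversality of the new direction to the spine carefully enough to guarantee that the number of split factors increases by exactly one rather than staying the same. The remaining compactness bookkeeping — dealing with a possible collapse, and checking that the spines and the witnessing cones pass to the limit — is routine.
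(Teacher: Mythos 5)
Your proposal is correct and follows essentially the same route as the paper: contradiction, rescaling to unit scale, pmGH compactness and stability to pass to an $\RCD(0,N)$ limit carrying both cone structures, and then a localized rigid cone-splitting (ray through the two vertices extended to a line, followed by the splitting theorem) to produce the extra $\setR$-factor and contradict the GH lower bound. You even flag the same key technical point the paper records in a footnote, namely that $2^{-1}\delta\gamma^{-N}>\delta$ is what makes the localized version of the rigid cone splitting (cf.\ \autoref{prop:conesplitting} and \autoref{rm:conesplitweaker}) applicable.
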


\autoref{thm:conesplittingquant} is a quantitative version of the following statement: if a metric cone with vertex $z$ is a metric cone also with respect to $z'\neq z$, then it contains a line. 
It can be rigorously stated in the setting of $\RCD$ spaces as follows.


\begin{proposition}[Cone splitting, rigid version]\label{prop:conesplitting}
	Let $(X,\dist,\meas)$ be an $\RCD(0,N)$ m.m.s. isomorphic to $\setR^l\times C(\bar{Z})$ for some $l\in\setN$ and some $\RCD(N-l-2,N-l-1)$ m.m.s. $(Z,\dist_Z,\meas_Z)$. Let $\bar{z}$ be the vertex of $C(\bar{Z})$ and suppose that there exist a metric cone  $C(\hat{Z})$ with vertex $\hat{z}$ and an isometry $I:C(\hat{Z})\to X$ such that $I(\hat{z})\notin \setR^l\times\left\lbrace \bar{z}\right\rbrace$. Then $(X,\dist,\meas)$ is isomorphic to $\setR^{l+1}\times C(\tilde{Z})$ for some $\RCD(N-l-3,N-l-2)$ m.m.s $(\tilde{Z},\dist_{\tilde{Z}},\meas_{\tilde{Z}})$.
\end{proposition}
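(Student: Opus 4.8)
The plan is to realize $X$ as a metric measure cone with respect to \emph{two distinct} tips, deduce that it splits off a line \emph{transverse} to the given $\setR^l$-factor, and then collect the corresponding splitting functions into a product decomposition with exactly one extra Euclidean factor. First I would note that $\setR^l\times C(\bar Z)$ is itself (isomorphic to) a metric measure cone with tip $o\doteq(0,\bar z)$: writing $\setR^l=C(\mathbb{S}^{l-1})$ and using the standard identity $C(A)\times C(B)\cong C(A\star B)$, where $\star$ is the spherical join, one has $X\cong C(\mathbb{S}^{l-1}\star\bar Z)$, with $o$ corresponding to the tip. By hypothesis $X$ is also isometric, via $I$, to the metric cone $C(\hat Z)$ with tip $p\doteq I(\hat z)$, and $p\neq o$ because $I(\hat z)\notin\setR^l\times\{\bar z\}\ni o$. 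Thus the whole statement reduces to the classical principle: a metric cone with two distinct tips contains a line and, under the above transversality, gains one Euclidean factor.

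To produce the line I would set $d\doteq\dist(o,p)>0$. Using that $p$ is a cone tip, extend a unit-speed geodesic from $p$ to $o$ to a ray $\sigma_-\colon[0,+\infty)\to X$ with $\sigma_-(0)=p$, $\sigma_-(d)=o$; reversing $\sigma_-|_{[0,d]}$ and, using that $o$ is a cone tip, extending it past $o$, one gets a ray $\sigma_+\colon[0,+\infty)\to X$ with $\sigma_+(0)=o$, $\sigma_+(d)=p$ and $\sigma_+|_{[0,d]}=\sigma_-(d-\plchldr)|_{[0,d]}$. Define $L\colon\setR\to X$ by $L(t)\doteq\sigma_+(t)$ for $t\ge 0$ and $L(t)\doteq\sigma_-(d-t)$ for $t\le d$; the two formulas agree on $[0,d]$. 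Each of $L|_{[0,+\infty)}$ and $L|_{(-\infty,d]}$ is a ray, hence minimizing, and for parameters $t_1<t_2$ straddling $o$ (resp.\ $p$) one checks $\dist(L(t_1),L(t_2))=t_2-t_1$ by writing the two points in cone coordinates centered at $o$ (resp.\ $p$) and using that the ray $\sigma_-$ (resp.\ $\sigma_+$), being globally minimizing through that tip, must have its two radial directions at cross-section distance $\ge\pi$. Hence $L$ is a line through $o$ and $p$, so by the splitting theorem for $\RCD(0,N)$ m.m.s.\ \cite{Gigli13} we get $X\cong\setR\times X'$ with $X'$ an $\RCD(0,N-1)$ m.m.s., the $\setR$-coordinate being the Busemann function $b$ of $L$, which satisfies $\abs{\nabla b}=1$ and $\Hess b=0$.

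The crucial and only genuinely nontrivial point is that $\nabla b$ is transverse to the $\setR^l$-factor, and this is exactly where $I(\hat z)\notin\setR^l\times\{\bar z\}$ is used. Let $g_1,\dots,g_l$ be the Euclidean coordinates of $X\cong\setR^l\times C(\bar Z)$ (so $\Hess g_i=0$ and $\{\nabla g_i\}$ is orthonormal), let $P\colon X\to\setR^l$ be the corresponding projection and $Q$ the projection onto $C(\bar Z)$. Suppose, for contradiction, that $\nabla b\in\mathrm{span}\{\nabla g_1,\dots,\nabla g_l\}$; since all these fields are parallel one finds $b(x)=\langle a,P(x)\rangle+c$ for constants $a\in\setR^l$, $c\in\setR$ with $\abs{a}=\abs{\nabla b}=1$. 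On one hand $b(L(t))=-t$ for all $t$ (a Busemann function is affine of slope $1$ along its own line). On the other hand, since $Q(o)=\bar z\neq Q(p)=Q(L(d))$, the component $Q\circ L$ is non-constant, so $P\circ L$ is a minimizing geodesic in $\setR^l$ of some speed $\mu<1$ through the origin, whence $\abs{b(L(t))}=\abs{\langle a,P(L(t))\rangle+c}\le\mu\abs{t}+\abs{c}$; letting $\abs{t}\to\infty$ yields $1\le\mu$, a contradiction. Therefore $\nabla g_1,\dots,\nabla g_l,\nabla b$ are linearly independent; after orthonormalizing and iterating the splitting theorem, $X\cong\setR^{l+1}\times\tilde X$ with $\tilde X$ an $\RCD(0,N-l-1)$ m.m.s., and since the $l$ coordinate lines of the $\setR^l$-factor and the line $L$ all pass through $o$, we may arrange this splitting so that $o=(0,\tilde o)$.

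Finally, $X=\setR^{l+1}\times\tilde X$ is still a metric measure cone with tip $(0,\tilde o)$, and $\dist_X((s,y),(0,\tilde o))^2=\abs{s}^2+\dist_{\tilde X}(y,\tilde o)^2$. Since the Hessian of a function on a product splits along the factors, the characterization of the tip of a metric measure cone through the identity $\Hess\bigl(\tfrac12\dist(\plchldr,\text{tip})^2\bigr)=\langle\nabla\plchldr,\nabla\plchldr\rangle$ of \cite{DePhilippisGigli16} forces $\Hess_{\tilde X}\bigl(\tfrac12\dist_{\tilde X}(\plchldr,\tilde o)^2\bigr)=\langle\nabla\plchldr,\nabla\plchldr\rangle$, i.e.\ $\tilde X$ is itself a metric cone $C(\tilde Z)$ over its cross-section; and as $C(\tilde Z)$ is $\RCD(0,N-l-1)$, the cross-section $(\tilde Z,\dist_{\tilde Z},\meas_{\tilde Z})$ is an $\RCD(N-l-3,N-l-2)$ m.m.s.\ with $\diam\tilde Z\le\pi$ (again by \cite{DePhilippisGigli16}, with the degenerate one- and two-point cases when $N-l$ is small read as in \autoref{FromVolumeConeToMetricCone}). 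This gives $X\cong\setR^{l+1}\times C(\tilde Z)$, as claimed. I expect the main obstacle to be precisely the transversality step of the third paragraph: producing \emph{some} line is classical once one has two cone tips, but one must genuinely exploit $I(\hat z)\notin\setR^l\times\{\bar z\}$ to make the Euclidean rank jump from $l$ to $l+1$ rather than stay put; by contrast, promoting $l+1$ independent parallel gradients to an honest $\setR^{l+1}$-splitting and recognizing the residual factor as a cone with the correct curvature–dimension parameters is routine given the splitting theorem and \cite{DePhilippisGigli16}.
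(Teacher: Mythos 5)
Your proof is correct and follows essentially the same route as the paper's: both arguments construct a line through the two vertices by extending the connecting geodesic radially through each tip (using that a minimizing geodesic through the tip of a cone has antipodal radial directions in the cross-section), both exploit $I(\hat z)\notin\setR^l\times\{\bar z\}$ to see that this line is transverse to the $\setR^l$-factor, and both conclude via the splitting theorem together with the observation that the residual factor is again a cone. The only cosmetic differences are that you establish transversality by comparing the linear growth of the Busemann function along $L$ with the sub-unit speed of $P\circ L$, where the paper instead shows that $f=u-\sum_i(\nabla u\cdot\nabla x_i)x_i$ is non-constant, and that you recognize $\tilde X$ as a cone via the Hessian identity of \cite{DePhilippisGigli16} rather than by intersecting the unit sphere of the cone with the section $\{0\}\times\tilde X$.
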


\begin{proof}
	The sought conclusion can be achieved through two intermediate steps.
	
	\textbf{Step 1.} Aim of this first step is to prove that $(X,\dist,\meas)$ contains a line passing through $I(\hat{z})$ (and therefore with non trivial component on the $C(\bar{Z})$ factor). In order to do so we wish to prove that the ray connecting $(0,\bar{z})$ to $I(\hat{z})$ actually extends to a line. Indeed, taking into account the fact that locally around $I(\hat{z})$ it is a geodesic we obtain that the cross section $\hat{Z}$ contains two points $z_1,z_2$ such that $\dist_{\hat{Z}}(z_1,z_2)=\pi$. Hence, considering now the ray emanating from $I(\hat{z})$ and passing through $(0,\bar{z})$, which corresponds to the point $z_2$ without loss of generality, we obtain that also $\bar{Z}$ contains points $\bar{z}_1,\bar{z}_2$ such that $\dist_{\bar{Z}}(\bar{z}_1,\bar{z}_2)=\pi$, otherwise the ray above would not be minimizing around $(0,\bar{z})$. Hence, as we claimed, there is a line in $X$ passing through $I(\hat{z})$ and $(0,\bar{z})$.   
	
	\textbf{Step 2.}
	The sought conclusion about the additional splitting follows from what we proved in \textbf{Step 1} applying \autoref{lemma:additionalsplitting} below. To conclude it suffices to observe that the split factor is still a metric cone since the whole space is. Indeed if a product $\mathbb{R}^l\times Z$ is a metric cone, then it can be viewed as a metric cone on his sphere of radius $1$ and $Z$ is the cone over the intersection of this sphere with the section $\{0\}\times Z$. 
\end{proof}

\begin{remark}\label{rm:conesplitweaker}
	Let us remark that the same conclusion of \autoref{prop:conesplitting} above holds true under the following weaker assumption: with the same notation adopted above, there exist $z\notin\setR^l\times\{\bar{z}\}$ and an isometry $I:B_r(\hat{z})\to B_r(z)$ for some $r>\dist(z,(0,\bar{z}))$ such that $I(\hat{z})=z$. This stronger statement can be checked with no modification w.r.t. the proof we presented above. 
\end{remark}

\begin{lemma}[Additional splitting]\label{lemma:additionalsplitting}
	Let $(X,\dist,\meas)$ be an $\RCD(0,N)$ m.m.s. isomorphic to $\setR^l\times Y$ for some $l\in\setN$ and some $\RCD(0,N-l)$ m.m.s. $(Y,\dist_Y,\meas_Y)$. Suppose that $X$ contains a line whose component on the factor $Y$ is non constant. Then $X$ is isomorphic to $\setR^{l+1}\times Z$ for some $\RCD(0,N-l-1)$ m.m.s. $(Z,\dist_Z,\meas_Z)$.
\end{lemma}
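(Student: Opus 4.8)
The plan is to reduce to the splitting theorem for $\RCD(0,N)$ metric measure spaces by extracting, from the given line, a genuine line inside the factor $Y$, and then to reassemble the Euclidean directions.

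First I would fix an isomorphism of metric measure spaces $\Phi\colon X\to\setR^l\times Y$, where $\setR^l\times Y$ is endowed with the $\ell^2$-product distance and the product measure $\leb^l\otimes\meas_Y$. Since $\Phi$ is in particular an isometry, the image $\gamma\doteq\Phi\circ\sigma$ of the given line $\sigma\colon\setR\to X$ is a line in $\setR^l\times Y$; write $\gamma(t)=(\alpha(t),\beta(t))$ with $\alpha\colon\setR\to\setR^l$, $\beta\colon\setR\to Y$, so that $\beta$ is non-constant by hypothesis. The key claim is that $\alpha$ and $\beta$ are, each up to an affine reparametrisation, lines (or constant maps) in their respective factors. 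This is the standard description of geodesics in an $\ell^2$-product: for $s<t$,
\[
(t-s)^2=\dist(\gamma(s),\gamma(t))^2=\abs{\alpha(s)-\alpha(t)}^2+\dist_Y(\beta(s),\beta(t))^2,
\]
while the length--energy (Cauchy--Schwarz) inequality gives $\abs{\alpha(s)-\alpha(t)}^2\le(t-s)\int_s^t\abs{\dot\alpha}^2$ and $\dist_Y(\beta(s),\beta(t))^2\le(t-s)\int_s^t\abs{\dot\beta}^2$, and the identity $\abs{\dot\gamma}^2=\abs{\dot\alpha}^2+\abs{\dot\beta}^2$ a.e.\ for metric speeds in a product, combined with $\abs{\dot\gamma}\equiv1$, yields $\int_s^t(\abs{\dot\alpha}^2+\abs{\dot\beta}^2)=t-s$. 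Hence both inequalities above must be equalities, which forces $\abs{\dot\alpha}$, $\abs{\dot\beta}$ to be a.e.\ constant, say $\lambda_1,\lambda_2\ge0$ with $\lambda_1^2+\lambda_2^2=1$, and forces $\alpha|_{[s,t]}$, $\beta|_{[s,t]}$ to be minimising. Running this on all intervals $[-L,L]$ and invoking uniqueness of the constant-speed parametrisation (so that $\lambda_2$ does not depend on $L$) gives $\dist_Y(\beta(s),\beta(t))=\lambda_2\abs{s-t}$ for every $s,t\in\setR$; since $\beta$ is non-constant, $\lambda_2>0$ and $t\mapsto\beta(\lambda_2^{-1}t)$ is a line in $Y$.

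At this point $(Y,\dist_Y,\meas_Y)$ is an $\RCD(0,N-l)$ metric measure space containing a line, so Gigli's splitting theorem for $\RCD(0,N)$ spaces yields an isomorphism $Y\cong\setR\times Z$ for some $\RCD(0,N-l-1)$ metric measure space $(Z,\dist_Z,\meas_Z)$ (in particular $N-l\ge1$, and $Z$ is a point when $N-l-1=0$). Composing with $\Phi$ and reassociating the Euclidean factors,
\[
X\;\cong\;\setR^l\times Y\;\cong\;\setR^l\times(\setR\times Z)\;\cong\;\setR^{l+1}\times Z,
\]
where the last step is the evident isometry together with the identification $\leb^l\otimes\leb^1\otimes\meas_Z=\leb^{l+1}\otimes\meas_Z$ of product measures. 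This is precisely the asserted conclusion.

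The only step that is not bookkeeping is the decomposition of the line in the product, i.e.\ the claim that $\beta$ is an affinely reparametrised line of $Y$; this is classical for $\ell^2$-products of geodesic spaces, the one mild subtlety being to check that the factor speeds obtained on the nested intervals $[-L,L]$ are independent of $L$, so that the limit $L\to\infty$ is legitimate. Once this is in place, the remainder is a direct appeal to the splitting theorem and a routine manipulation of product measures.
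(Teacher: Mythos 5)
Your proof is correct, but it follows a genuinely different route from the one in the paper. You argue purely metrically: you decompose the line in the $\ell^2$-product $\setR^l\times Y$ into constant-speed minimizing projections on the two factors (the Minkowski/Cauchy--Schwarz equality argument you sketch is the standard and correct way to do this, and your check that the factor speeds do not depend on the interval $[-L,L]$ is exactly the point that needs saying), conclude that $Y$ itself contains a line, and then invoke Gigli's splitting theorem once, directly on the factor $(Y,\dist_Y,\meas_Y)$. The paper instead works on $X$ functionally: it takes the Busemann function $u$ of the line, uses $\Delta u=0$, $\abs{\nabla u}=1$ and the Bochner inequality to get $\Hess u=0$, subtracts the linear part $a_1x_1+\dots+a_lx_l$ with $a_i=\nabla u\cdot\nabla x_i$ to produce a non-constant splitting function that is independent of the Euclidean variables, and then applies the functional splitting lemma (\autoref{lem:Splitting}, via regular Lagrangian flows). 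Your approach is more elementary and self-contained on the geometric side and uses the splitting theorem as a black box on $Y$; the paper's approach fits the $\Gamma$-calculus toolbox it develops anyway (and which it needs elsewhere), at the cost of the Hessian and flow machinery. Both are complete proofs of the same statement; the only cosmetic caveat in yours is the degenerate case $1\le N-l<2$, where the split factor $Z$ is a point, which you already flag and which is consistent with the convention implicit in the lemma.
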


\begin{proof}
	We just briefly outline the strategy of the proof.\\
	Let us begin by observing that in the case $l=0$ the statement corresponds to the splitting theorem, proved in this generality in \cite{Gigli13}.\\
	If $l\ge1$ we wish to prove that the existence of a line with non constant $Y$-component implies the existence of a splitting function on the $Y$-factor and therefore the conclusion. In order to do so, first we build the Busemann function $u$ associated to the given line. From \cite{Gigli13} we know that $\Delta u=0$ and $|\nabla u|=1$ and then from the Bochner formula, $\Hess u=0$ (see \cite{Gigli18} and \cite{Han18b}). Let us denote furthermore by $x_1,\dots,x_l$ the coordinate functions of the Euclidean factor $\setR^l$. We claim that there exist real numbers $a_1,\dots,a_l$ such that $f\doteq u-a_1x_1\cdots-a_lx_l$ is a non constant function with constant minimal upper gradient, independent of the Euclidean variable and with vanishing Hessian. 
	Indeed 
	we can define $a_i\doteq \nabla u \cdot \nabla x_i$. These numbers are constant because of the fact that $\Hess u=0$ and $\Hess x_i=0$. Then taking $f$ as before, from $\Delta u=0$ it follows $\Delta f=0$ and from $\Hess u=0$ it follows $\Hess f = 0$; while from the fact that $|\nabla u|=1$, it follows, because of the tensorization, that $|\nabla f|$ is constant. Such a function induces a splitting function on $(Y,\dist_Y,\meas_Y)$ and the sought conclusion can be obtained applying the results in the appendix of \cite{Han18}, which inspires the fortchoming \autoref{lem:Splitting}. One has to verify that $f$ is not constant: if not $u$ will be an affine function on $\mathbb{R}^l$ and then the Busemann function of a line entirely contained in the first factor $\mathbb{R}^l$. This is not possible since the line associated to the Busemann function $u$ had a non-trivial projection over the second factor, while in this case $u$ would be the Busemann function associated to a line in the factor $\mathbb{R}^l$.
\end{proof}

\begin{lemma}[Functional splitting]\label{lem:Splitting}
	Let $(X,\dist,\meas)$ be an $\RCD(0,N)$ space and let $u:X\to \mathbb R$ a function such that $\Delta u=0$ and $|\nabla u|=1$. Then $(X,\dist,\meas)$ is isomorphic to $(Y\times \mathbb{R},\dist_Y\times \dist_{eucl},\meas_Y\times \leb^1)$.  
	
	Analogously, if there exist functions $u_1,\dots,u_l:X\to \mathbb R$ such that for all $i=1,\dots,l$ it holds $\Delta u_i=0$ and $|\nabla u_i|=1$ in $(X,\dist,\meas)$ as before, and $\nabla u_i \cdot \nabla u_j=0$ for all $1\leq i<j\leq l$, then $(X,\dist,\meas)$ is isomorphic to $(Y\times \mathbb{R}^l,\dist_Y\times \dist_{eucl},\meas_Y\times \leb^l)$.
\end{lemma}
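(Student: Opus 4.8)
The plan is to bootstrap from the Gigli splitting theorem (\cite{Gigli13}), which handles exactly the case $l=1$: a function $u$ with $\Delta u = 0$ and $|\nabla u| = 1$ on an $\RCD(0,N)$ space produces a Busemann-type decomposition, so $(X,\dist,\meas)$ is isomorphic to a product $(Y\times\setR, \dist_Y\times\dist_{\mathrm{eucl}}, \meas_Y\times\leb^1)$, and moreover $Y$ is an $\RCD(0,N-1)$ space and the $\setR$-coordinate is (up to an affine change) $u$ itself. This is the content of the first assertion; for it there is nothing to do beyond quoting \cite{Gigli13} (together with the fact, also from there and from the Bochner inequality, that $\Hess u = 0$ and that the factor $Y$ inherits the $\RCD(0,N-1)$ condition).

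For the second assertion I would argue by induction on $l$. Suppose $u_1,\dots,u_l$ are harmonic with unit gradient and mutually orthogonal gradients. Apply the case $l=1$ to $u_l$: this yields an isomorphism $\Phi\colon X \to Y\times\setR$ under which $u_l$ corresponds to the linear coordinate $t$ on the $\setR$-factor and $Y$ is $\RCD(0,N-1)$. The key step is then to transfer $u_1,\dots,u_{l-1}$ to functions on $Y$. Because $\nabla u_i\cdot\nabla u_l = 0$ for $i<l$ and $\Hess u_l = 0$ (so $\nabla u_l$ is a parallel field generating the product direction), each $u_i$ is independent of the $t$-variable: indeed $\partial_t u_i = \nabla u_i\cdot\nabla u_l = 0$ in the sense of the tensorized calculus, so under $\Phi$ we have $u_i = v_i\circ\mathrm{pr}_Y$ for some $v_i\colon Y\to\setR$. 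Using the tensorization of the Cheeger energy and of the Laplacian on a product (see \cite{AmbrosioGigliSavare14,Gigli15}), the identities $\Delta u_i = 0$, $|\nabla u_i| = 1$, and $\nabla u_i\cdot\nabla u_j = 0$ descend to $\Delta_Y v_i = 0$, $|\nabla v_i| = 1$, and $\nabla v_i\cdot\nabla v_j = 0$ on $Y$ for $1\le i<j\le l-1$. By the inductive hypothesis applied on the $\RCD(0,N-1)$ space $Y$ we get $Y \cong Z\times\setR^{l-1}$ with $Z$ an $\RCD(0,N-l)$ space, and composing with $\Phi$ gives $X \cong Z\times\setR^{l-1}\times\setR = Z\times\setR^{l}$, with the measure and distance splitting as products as claimed.

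The main obstacle is the transfer step: making rigorous that harmonicity, unit gradient, and gradient-orthogonality pass to the quotient factor $Y$. This requires the tensorization properties of the Sobolev calculus on $\RCD$ products — that $W^{1,2}(Y\times\setR)$ decomposes compatibly, that $|\nabla(v\circ\mathrm{pr}_Y)|^2 = |\nabla_Y v|^2\circ\mathrm{pr}_Y$, that the Laplacian acts blockwise, and that a $W^{1,2}$ function with $t$-derivative zero is genuinely pulled back from $Y$. All of these are available in the literature cited in the paper, so the argument is a careful but routine assembly rather than a new difficulty; the only point demanding a little care is checking that the $v_i$ are non-constant (which follows since $|\nabla v_i| = 1$) so that the inductive hypothesis applies non-degenerately.
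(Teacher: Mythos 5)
Your proposal is correct and follows essentially the same route as the paper: reduce the single splitting to the machinery of Gigli's splitting theorem, and obtain the multiple splitting by descending the remaining functions to the cross-section and iterating (which is exactly the argument of \cite[Conclusion of Theorem 5.1]{MondinoNaber14} that the paper invokes). The only caveat is in the base case: the splitting theorem of \cite{Gigli13} as \emph{stated} takes a line, not a function, as input, so ``quoting'' it is not quite enough --- the paper fills this in by running the regular Lagrangian flow of $\nabla u$ and using \cite[Theorem 1.9 (iv)]{AmbrosioBrueSemola18} to show the flow is a one-parameter family of isometries translating each point at unit speed, after which the construction of \cite[Sections 5--7]{Gigli13} applies verbatim; this is a citation-level imprecision in your write-up rather than a mathematical gap.
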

\begin{proof}
     From the improved Bochner formula \cite[Corollary 3.3.9]{Gigli18} it follows that $\Hess u=0$. Then one can consider the regular Lagrangian flow $X_t$ (see \cite{AmbrosioTrevisan14} for the definition of regular Lagrangian flow) associated to $\nabla u$.     
     Since $\Delta u=0$ and $\Hess u=0$ we can use \cite[Theorem 1.9, (iv)]{AmbrosioBrueSemola18} to deduce that for every $x,y \in X$,
	\begin{equation}\label{Split1}
	\dist(\XX_t(x),\XX_t(y))=\dist(x,y) \qquad \forall t>0.
	\end{equation}
	Then, since for every $x\in X$
	$$
	\frac{\mbox{d}}{\mbox{d}t} u(\XX_t(x))=\nabla u\cdot \nabla u(\XX_t(x))=1,
	$$
	it follows that for $x\in X$, $u(\boldmath{X}_t(x))-u(x)=t$. Using this information, jointly with the fact that $u$ has a $1$-Lip representative, being $|\nabla u|=1$, and the fact that $\dist(\XX_{t}(x),x)\leq t$ because of $|\nabla u|\leq 1$, it follows that for every $x\in X$ and $t>0$,
	\begin{equation}\label{Split2}
	\dist(\XX_t(x),x)=t.
	\end{equation}
	From \eqref{Split1} and \eqref{Split2} it follows the splitting as in \cite[Sections 5 to 7]{Gigli13} up to substituting $\XX_t(x)$ with $F_t(x)$ therein.\\	
	For the multiple splitting, one argues precisely as in \cite[Conclusion of Theorem 5.1]{MondinoNaber14}.

\end{proof}

\begin{proof}[Proof of \autoref{thm:conesplittingquant}]
	The conclusion follows from \autoref{prop:conesplitting} via rescaling and a compactness argument.\\
	Let us suppose by contradiction that the statement is not satisfied. After rescaling we obtain the existence of sequences $\varepsilon_n\downarrow 0$ and $\alpha_n\downarrow 0$, of a sequence of $\RCD(-\alpha_n,N)$ m.m.s. $(X_n,\dist_n,\meas_n)$, of points $x_n\in X_n$ and of $\varepsilon_n$-GH equivalences
	\begin{equation*}
	F_n:B_{\gamma^{-1}}((0,z_n^*))\to B_{\gamma^{-1}}(x_n),
	\end{equation*}
	where $(0,z_n^*)$ denotes the vertex of a cone $\setR^l\times C(Z_n)$. Furthermore there are points 
	\begin{equation*}
	x_n'\in B_{\delta}(x_n)\cap C^{\varepsilon_n}_{\gamma^{-N},\delta}
	\end{equation*}
	with 
	\begin{equation*}
	x_n'\notin T_{\tau}\left(F_n(\setR^l\times\left\lbrace z_n^*\right\rbrace )\right)\cap B_{1}(x_n)
	\end{equation*}
	and the estimate
	\begin{equation}\label{eq:contr}
	\dist_{GH}\left(B_1(x_n),B_1((0,\tilde{z}^*))\right)\ge\psi
	\end{equation}
	is satisfied for any cone of the form $\setR^{l+1}\times C(\tilde{Z})$, where  $(\tilde{Z},\dist_{\tilde{Z}},\meas_{\tilde{Z}})$ is a $\RCD(N-l-3,N-l-2)$ metric measure space.
	Passing to the limit all the conditions above, by compactness and stability (see \autoref{remark:stability}) we obtain an $\RCD(0,N)$ m.m.s. $(X,\dist,\meas)$, $x\in X$, $l\in\setN$, an $\RCD(N-l-2,N-l-1)$ m.m.s. $(Z,\dist_Z,\meas_Z)$ and an isometry
	\begin{equation}\label{eq:lociso}
	F:B_{\gamma^{-1}}((0,z^*))\to B_{\gamma^{-1}}(x),
	\end{equation}
	where $(0,z^*)$ is a vertex of the cone $\setR^l\times C(Z)$.
	Furthermore we can find $x'\in B_{\delta}(x)$ such that $B_{2^{-1}\delta\gamma^{-N}}(x')$\footnote{Note that $2^{-1}\delta\gamma^{-N}>\delta$ which will be important to end the proof by applying  a localized version of \autoref{prop:conesplitting} around $x$ and $(0,z^*)$ (see also \autoref{rm:conesplitweaker}).} is isometric to the ball centred in the tip of a metric cone and 
	\begin{equation}\label{eq:confar}
	x'\notin T_{\tau}\left(F(\setR^l\times C(Z))\right)\cap B_1(x) 
	\end{equation}
	and, by \eqref{eq:contr}, we get that 
	\begin{equation}\label{eq:farfromcone}
	\dist_{GH}\left(B_1(x),B_1((0,\tilde{z}^*))\right)\ge\psi,
	\end{equation}
	for any cone of the form $\setR^{l+1}\times C(\tilde{Z})$, where  $(\tilde{Z},\dist_{\tilde{Z}},\meas_{\tilde{Z}})$ is an $\RCD(N-l-3,N-l-2)$ metric measure space.\\
	Taking into account a localized version of \autoref{prop:conesplitting} around $x$ and $(0,z^*)$ (see also \autoref{rm:conesplitweaker}), the combination of \eqref{eq:lociso}, \eqref{eq:confar} and \eqref{eq:farfromcone} gives the sought contradiction.
\end{proof}

\subsection{Singular sets on noncollapsed $\RCD(K,N)$ spaces}
In this subsection we briefly review the main structural results for non collapsed $\RCD(K,N)$ spaces.

Given a m.m.s. $(X,\dist,\meas)$, $x\in X$ and $r\in(0,1)$, we consider the rescaled and normalized pointed m.m.s. $(X,r^{-1}\dist,\meas_r^{x},x)$, where 
\begin{equation*}
\meas_r^x\doteq \left( \int_{B(x,r)} 1-\frac{\dist(x,y)}{r} \di \meas(y)\right)^{-1}\meas.
\end{equation*}

\begin{definition}
	Let $(X,\dist,\meas)$ be an $\RCD(K,N)$ m.m.s. for some $1<N<+\infty$, $K\in\setR$ and let $x\in X$. We say that a pointed m.m.s. $(Y,\dist_Y,\eta,y)$ is tangent to $(X,\dist,\meas)$ at $x$ if there exists a sequence $r_i\downarrow 0$ such that $(X,r_i^{-1}\dist,\meas_{r_i}^x,x)\rightarrow(Y,\dist_Y,\eta,y)$ in the pmGH topology. The collection of all the tangent spaces of $(X,\dist,\meas)$ at $x$ is denoted by $\Tan(X,\dist,\meas,x)$.
\end{definition}

A compactness argument, which is due to Gromov, together with the rescaling and stability properties of the $\RCD(K,N)$ condition (see \autoref{remark:stability}), yields that $\text{Tan}(X,\dist,\meas,x)$ is non empty for every $x\in X$ and its elements are all $\RCD(0,N)$ pointed m.m.s..

In the special case in which $(X,\dist,\meas)$ is non collapsed any tangent cone has a conical structure, we refer to \cite{DePhilippisGigli18} for the proof of this result.

\begin{theorem}\label{thm:tangentconesaremetricocnes}
Let $(X,\dist,\meas)$ be a $\ncRCD(K,N)$ metric measure space. Then, for any $x\in X$, any $(Y,\dist_Y,\eta,y)\in\Tan(X,\dist,\meas,x)$ is a metric cone according to \autoref{def:ConicalSet}.
\end{theorem}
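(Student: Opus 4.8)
The plan is to deduce the statement from the rigidity Theorem~\ref{FromVolumeConeToMetricCone} (``volume cone implies metric cone''), applied at every scale on a tangent space; the crucial intermediate point is that the Bishop--Gromov ratio of the tangent, computed with $K=0$, is a \emph{constant} function of the radius. Fix $x\in X$ and a sequence $r_i\downarrow 0$ realising a tangent $(X,r_i^{-1}\dist,\meas_{r_i}^x,x)\to(Y,\dist_Y,\eta,y)$; by the compactness and stability of the $\RCD$ condition recalled after the definition of $\Tan$, the limit is an $\RCD(0,N)$ pointed m.m.s.. By the Bishop--Gromov inequality~\eqref{prop:BishopGromovInequality} together with \autoref{RemarkOnBishopGromov}, the density $\vartheta\doteq\vartheta_N[X,\dist,\mathcal H^N](x)$ of \eqref{BishopGromovDensity} is the limit of $\mathcal H^N(B_s(x))/(\omega_N s^N)$ as $s\downarrow0$; it is $\le 1$ by \autoref{DensityLessThanOne}, and it is strictly positive because $s\mapsto \mathcal H^N(B_s(x))/v_{K,N}(s)$ is non-increasing and $\mathcal H^N(B_1(x))>0$. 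Hence the rescaled volumes $r_i^{-N}\mathcal H^N(B_{r_i}(x))\to\omega_N\vartheta>0$ do not collapse, so alternative (a) of the volume convergence \autoref{thm:volumeconvergence} applies to $(X,r_i^{-1}\dist,r_i^{-N}\mathcal H^N,x)$; since ``metric cone'' in \autoref{def:metriccone} is a purely metric notion, the (convergent, positive) normalisation constant relating $\meas_{r_i}^x$ to $r_i^{-N}\mathcal H^N$ is harmless and we may assume $\eta=\mathcal H^N$ and $(X,r_i^{-1}\dist,\mathcal H^N,x)\to(Y,\dist_Y,\mathcal H^N,y)$ in the pmGH topology.

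Next I would prove that $\mathcal H^N(B_s(y))=\omega_N\vartheta\,s^N$ for every $s>0$. Weak convergence of the Hausdorff measures under the above pmGH convergence gives, for each fixed $s>0$, $\mathcal H^N(B_s(y))\le\liminf_i r_i^{-N}\mathcal H^N(B_{sr_i}(x))$ and $\mathcal H^N(\bar B_s(y))\ge\limsup_i r_i^{-N}\mathcal H^N(\bar B_{sr_i}(x))$; since $sr_i\downarrow0$ and $r_i^{-N}\mathcal H^N(B_{sr_i}(x))=s^N\,\mathcal H^N(B_{sr_i}(x))/(sr_i)^N$, both limits equal $\omega_N\vartheta\,s^N$ by the definition of the density. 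Thus $\mathcal H^N(B_s(y))\le \omega_N\vartheta\,s^N\le\mathcal H^N(\bar B_s(y))$ for all $s>0$, and comparing, for $s'<s$, the chain $\omega_N\vartheta\,(s')^N\le\mathcal H^N(\bar B_{s'}(y))\le\mathcal H^N(B_s(y))$ and letting $s'\uparrow s$ yields the reverse inequality, hence the equality. This is exactly the limiting mechanism already exploited in the proof of \autoref{FromAlmostVolumeCone}.

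Finally I would apply \autoref{FromVolumeConeToMetricCone} to the $\RCD(0,N)$ space $(Y,\dist_Y,\mathcal H^N)$ at the vertex $y$ and an arbitrary pair $R>r>0$: the constancy just proved gives $\mathcal H^N(B_R(y))/R^N=\mathcal H^N(B_r(y))/r^N$, so $\bar B_{R/2}(y)$ is isometric, through an isometry sending $y$ to the tip, to the closed ball of radius $R/2$ about the tip of the metric cone over some $\RCD(N-2,N-1)$ cross-section $Z_R$ with $\diam Z_R\le\pi$ --- the degenerate low-dimensional alternatives of that theorem being isometries with $[-R/2,R/2]$ or $[0,R/2]$, which are themselves metric cones. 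For $R'<R$, the smaller ball $\bar B_{R'/2}(y)$ corresponds to the cone ball of radius $R'/2$ over the \emph{same} cross-section, so by uniqueness $Z_R$ may be taken independent of $R$ and the isometries may be chosen compatibly; passing to the limit $R\to+\infty$ and taking the increasing union identifies $(Y,\dist_Y,\mathcal H^N)$ with the metric cone $C(Z)$ of vertex $y$, as desired.

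I expect the main difficulty to be in the second step, namely upgrading the \emph{weak} convergence of the Hausdorff measures to the pointwise identity valid for every radius $s$: one must handle the discrepancy between open and closed balls and the interchange of $i\to\infty$ with $sr_i\to0$. As a secondary point, one should double-check the compatibility of the cross-sections $Z_R$ across scales and the treatment of small $N$, for which one lands in the degenerate alternatives of \autoref{FromVolumeConeToMetricCone}.
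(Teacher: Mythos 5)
The paper does not actually prove this theorem --- it defers to \cite{DePhilippisGigli18} --- and your argument (constancy of the Bishop--Gromov density along the blow-up, volume convergence to replace the normalized measure by $\mathcal H^N$ on the tangent, and then \autoref{FromVolumeConeToMetricCone} applied at every scale) is precisely the proof given in that reference. The argument is correct; the open/closed ball squeeze and the compatibility of the cross-sections $Z_R$ across radii that you flag at the end are indeed the only points requiring (routine) care.
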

As a consequence of the structural property proved in \cite{MondinoNaber14} it is simple to see that if $(X,\dist,\meas)$ is a $\ncRCD(K,N)$ m.m.s. then $N$ is integer and the \textit{regular set}
\begin{equation}
	\mathcal{R}\doteq
	\left\lbrace x\in X:\ \Tan(X,\dist,\meas,x)=\left\lbrace(\setR^N,\dist_{eucl},c_N\leb^N,0^N)\right\rbrace \right\rbrace,
	\quad c_N\doteq \frac{N+1}{\omega_N}
\end{equation}
satisfies $\meas(X\setminus \mathcal{R})=0$. The \textit{singular set} of $X$ is the complement of $\mathcal{R}$. In \cite{CheegerColding97} Cheeger and Colding, inspired by the stratification results of geometric measure theory, introduced a way to stratify the singular set of a non collapsed Ricci limit according to the maximal dimension of the Euclidean factor split off by a tangent space.
This definition can be given also in the context of $\ncRCD(K,N)$ spaces and reads as follows:

\begin{definition}\label{def:stratification}
Let $(X,\dist,\meas)$ be a $\ncRCD(K,N)$ m.m.s..
Given $x\in X$ and $0\le k\le N$ we say that $x\in\mathcal{S}^k$ if no tangent space of $(X,\dist,\meas)$ at $x$ splits off isometrically a factor $\setR^{k+1}$.
\end{definition}
Note that we have the inclusions 
\begin{equation*}
	\mathcal{S}^0\subset \mathcal{S}^1\subset ...\subset \mathcal{S}\doteq X\setminus \mathcal{R}.
\end{equation*}

\begin{example}
Let $K$ be the region delimited by a triangle in $\setR^2$. Let $l_i$ be the edges of the triangle and $v_i$ be its vertexes. Then $(K,\dist_{eucl},\leb^2\res K)$\footnote{With $\leb^2\res K$ we mean $\leb^2\res K(E)\doteq \leb^2(K\cap E)$ for every $E$ Borel.} is a non collapsed $\RCD(0,2)$ m.m.s. (it is not a non collapsed Ricci limit of a sequence of two dimensional Riemannian manifolds instead, as it follows from \cite{CheegerColding2000a}). Observe that all the points in the interior of $K$ are regular points. The interior points of the edges belong to $\mathcal{S}^1\setminus\mathcal{S}^0$, while the vertexes are in $\mathcal{S}^0$. 
\end{example}

\begin{theorem}\label{thm:dimensionestimate}
Let $(X,\dist,\meas)$ be a $\ncRCD(K,N)$ m.m.s. with $K\in\mathbb{R}$ and $N\in[1,+\infty)$. Then it holds that $\dim_{H}\mathcal{S}^k\le k$ for any $0\le k\le N$.
\end{theorem}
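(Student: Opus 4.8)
The strategy is Federer's dimension-reduction scheme, exactly as in \cite{CheegerColding97} and in its non-smooth adaptation in \cite{DePhilippisGigli18} (alternatively, the estimate is a straightforward consequence of \autoref{thm:RCDquantitativevolumebound} below). Two preliminary observations set the stage. First, the strata $\mathcal{S}^k$ depend only on tangent cones, hence are invariant under global rescalings of $\dist$; and by \autoref{thm:tangentconesaremetricocnes} every tangent cone of a $\ncRCD(K,N)$ space is a $\ncRCD(0,N)$ metric cone (the curvature parameter becomes $Kr_i^2\to 0$ along any blow-up), a property inherited by iterated tangent cones. Second, if $Y=\setR^j\times C(W)$ is a $\ncRCD(0,N)$ metric cone, written so that $C(W)$ carries no further Euclidean line factor, then at any point $p=(v,w)$ with $w$ different from the vertex $w_0$ of $C(W)$ the radial geodesic through $p$ is minimizing near $p$, hence blows up to a line in $\Tan(C(W),w)$; by the splitting theorem \cite{Gigli13} this forces $\Tan(Y,p)$ to split $\setR^{j+1}$, so that
\[
\mathcal{S}^{j}(Y)\subseteq\setR^j\times\{w_0\},
\]
and in particular $\mathcal{H}^{j+\alpha}\bigl(\mathcal{S}^{j}(Y)\bigr)=0$ for every $\alpha>0$. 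This is the rigid cone-splitting phenomenon of \autoref{prop:conesplitting}, in its degenerate form $l=0$.

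Now suppose, for contradiction, that $\mathcal{H}^{s}(\mathcal{S}^k)>0$ for some $s=k+\alpha>k$. Fix a closed set $A\subseteq\mathcal{S}^k$ with $0<\mathcal{H}^s(A)<\infty$; since $(X,\dist)$ is locally doubling by \eqref{eq:locdoubling}, the density theorem for Hausdorff measures yields a point $x\in A$ at which the restriction of $\mathcal{H}^s$ to $A$ has upper density in $[2^{-s},1]$. Blow up at $x$ along a sequence $r_i\downarrow 0$ realising this upper density: up to a subsequence one obtains a tangent cone $Y=\setR^{j_0}\times C(W_0)$ and, as a Hausdorff limit of the rescaled copies of $A$, a closed set $S_0\subseteq\bar B_1(y)\subseteq Y$ through the base point $y$. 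The rescaled copies of $A$ carry a uniform lower $\mathcal{H}^s$-bound and a uniform upper-density bound, so by lower semicontinuity of the Hausdorff content under Hausdorff convergence one gets $\mathcal{H}^s(S_0)>0$; moreover membership in $\mathcal{S}^k$ passes to the limit, $S_0\subseteq\mathcal{S}^k(Y)$ --- this is exactly where \autoref{FromAlmostVolumeCone} (``almost volume cone implies almost metric cone''), the almost-splitting theorem and \cite{Gigli13} enter, to the effect that a point whose balls are, at small scales, Gromov--Hausdorff close to balls at the vertex of an $\setR^{k+1}$-splitting cone has a tangent cone splitting $\setR^{k+1}$. Since $y\in S_0\subseteq\mathcal{S}^k(Y)$, the cone $Y$ itself does not split $\setR^{k+1}$, so $j_0\le k$.

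One then iterates. At a generic stage we have a $\ncRCD(0,N)$ metric cone $Y^{(m)}=\setR^{j_m}\times C(W_m)$ with $j_m\le k$ and a closed $S_m\subseteq\mathcal{S}^k\bigl(Y^{(m)}\bigr)\cap\bar B_1$ with $\mathcal{H}^s(S_m)>0$. If $S_m\subseteq\setR^{j_m}\times\{w_0\}$, then $\mathcal{H}^s(S_m)=0$ because $s>k\ge j_m$, a contradiction; by the structural inclusion of the first paragraph this is in particular the case once $j_m=k$. Otherwise the $\mathcal{H}^s$-null plane $\setR^{j_m}\times\{w_0\}$ is avoided by $\mathcal{H}^s$-a.e.\ point of $S_m$, so we may pick a density point $z\in S_m$ off it and blow $Y^{(m)}$ up there; by the radial-line argument this produces a $\ncRCD(0,N)$ metric cone $Y^{(m+1)}=\setR^{j_{m+1}}\times C(W_{m+1})$ with $j_{m+1}\ge j_m+1$, and the same bookkeeping as in the first step yields a closed $S_{m+1}\subseteq\mathcal{S}^k\bigl(Y^{(m+1)}\bigr)\cap\bar B_1$ with $\mathcal{H}^s(S_{m+1})>0$ and $j_{m+1}\le k$ (again because $S_{m+1}$ passes through a point of $\mathcal{S}^k$). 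The integers $j_m$ strictly increase while remaining $\le k$, so the process terminates in a contradiction after finitely many steps. Hence $\mathcal{H}^s(\mathcal{S}^k)=0$ for every $s>k$, i.e.\ $\dim_H\mathcal{S}^k\le k$.

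The only genuinely hard point is the stability of the condition ``no tangent cone splits $\setR^{k+1}$'' under pointed measured Gromov--Hausdorff convergence, i.e.\ the inclusion $S_0\subseteq\mathcal{S}^k(Y)$: this property is not obviously closed, and controlling it needs the full strength of the almost-rigidity theory --- the splitting theorem \cite{Gigli13}, the almost-splitting theorem and \autoref{FromAlmostVolumeCone} --- together with the metric-cone structure of tangent cones (\autoref{thm:tangentconesaremetricocnes}). The surrounding Federer combinatorics, including the propagation of positive $\mathcal{H}^s$-measure through the blow-ups (most cleanly handled via the Hausdorff content and a Frostman-type bound rather than the measure itself), is routine.
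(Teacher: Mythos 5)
Your overall strategy --- Federer dimension reduction combined with the radial-line/cone-splitting observation and the splitting theorem --- is exactly the one the paper points to: its own ``proof'' of \autoref{thm:dimensionestimate} is a citation of \cite{CheegerColding97} and \cite{DePhilippisGigli18} together with the remark that the argument rests on dimension reduction, \cite{Gigli13} and \autoref{thm:tangentconesaremetricocnes}. Most of your individual steps (the inclusion $\mathcal{S}^j(\setR^j\times C(W))\subseteq\setR^j\times\{w_0\}$ via the radial line and the splitting theorem, the density-point blow-up, the propagation of positive $\mathcal{H}^s$-content to the limit set, the bookkeeping $j_{m+1}\ge j_m+1$ with $j_m\le k$) are sound.

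There is, however, a genuine gap at precisely the point you flag as ``the only genuinely hard point'', and it is not merely a matter of invoking enough almost-rigidity: the inclusion $S_0\subseteq\mathcal{S}^k(Y)$ is false as stated, because membership in $\mathcal{S}^k$ does not pass to pointed Gromov--Hausdorff limits. The condition $x\in\mathcal{S}^k$ constrains only tangent cones at $x$, not balls of any definite size, so a sequence of $\mathcal{S}^k$-points may converge, after rescaling, to a regular point of the tangent cone; the paper's own remark that $\mathcal{S}^0$ can be dense in a two-dimensional non collapsed space (so that singular points accumulate at regular ones) shows the phenomenon is real, and no almost-rigidity statement rescues the literal claim. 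The repair --- and the actual content of the cited proofs --- is to first write
\begin{equation*}
\mathcal{S}^k=\bigcup_{\eta>0}\bigcap_{r>0}\mathcal{S}^k_{\eta,r}
\end{equation*}
as in \eqref{eq:sngular vs quantitative singular} (a countable union by \autoref{remark: singular vs quantitative singular}) and to run the dimension reduction separately on each $\bigcap_{r>0}\mathcal{S}^k_{\eta,r}$: this set's defining condition is quantified uniformly over all scales, hence it \emph{is} stable under the rescaled limits (up to a harmless loss in $\eta$), and a point of $\bigcap_{r>0}\mathcal{S}^k_{\eta,r}(Y)$ does belong to $\mathcal{S}^k(Y)$, which is exactly what your iteration needs at every stage to keep $j_{m+1}\le k$. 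Without introducing this decomposition the induction cannot be run as written. (Your parenthetical alternative --- deducing the estimate from \autoref{thm:RCDquantitativevolumebound} --- is the route the paper itself takes after stating that theorem, via \eqref{z3}--\eqref{z4} and \autoref{lemma: tubolar estimate}, modulo localizing the volume lower bound \eqref{VolumeBound}; but that derivation also passes through the quantitative strata, for the same reason.)
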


\begin{proof}
We refer to \cite[Theorem 4.7]{CheegerColding97} for the proof of this result for non collapsed Ricci limits and to \cite[Theorem 1.8]{DePhilippisGigli18} for its generalization to $\ncRCD$ spaces.\\
Let us just recall here that the proof is based on a dimension reduction argument and on the use of the splitting theorem \cite{Gigli13}, together with \autoref{thm:tangentconesaremetricocnes}.
\end{proof}

\begin{remark}
	It is possible to find examples of non collapsed Ricci limit spaces of dimension $2$ such that $\mathcal{S}^0$ is dense (see for instance \cite[Subsection 3.4]{CheegerJangNaber18}). Hence, in general, $\mathcal{H}^k$ is not locally finite when restricted to $\mathcal{S}^k$.
\end{remark}

\section{Volume bound for the quantitative strata}\label{sec:mainresult}

\subsection{Statement and basic consequences}
A quantitative counterpart of the stratification in \autoref{def:stratification} was introduced in \cite{CheegerNaber13a} in the setting of non collapsed Ricci limit spaces. The definition extends to the case of $\ncRCD$ spaces with no modification.

\begin{definition}
For any $\eta>0$ and any $0<r<1$, define the $k^{th}$-effective stratum $\mathcal{S}^k_{\eta,r}$ by
\begin{equation*}
\mathcal{S}^{k}_{\eta,r}\doteq \set{y|\dist_{GH}(B_s(y),B_s\left((0,z^*)\right))\ge\eta s\quad\text{for all } \setR^{k+1}\times C(Z)\quad\text{and all } r\le s\le1},
\end{equation*}
where $B_s\left((0,z^*)\right)$ denotes the ball in $\setR^{k+1}\times C(Z)$ centered at $(0,z^*)$ with radius $s$.
\end{definition}
Since it plays a role in the sequel of the note, we point out here that, given metric spaces $(X,\dist_X)$ and $(Y,\dist_Y)$, the notions ``$\dist_{GH}(X,Y)\le\varepsilon$'' and ``there exists an $\varepsilon$-GH isometry between $X$ and $Y$'' are only equivalent up to a multiplicative constant which, however plays no role for the sake of our discussion. We refer to \cite[Chapter 27]{Villani09} for more details about this point.

Let us observe now that
\begin{equation}\label{eq:inclusion singular set}
\mathcal{S}^k_{\eta,r}\subset\mathcal{S}^{k'}_{\eta',r'},\quad \text{if  $k\le k'$, $\eta'\le\eta$ and $r\le r'$}
\end{equation}
and 
\begin{equation}\label{eq:sngular vs quantitative singular}
\mathcal{S}^k=\bigcup_{\eta}\bigcap_{r}\mathcal{S}^{k}_{\eta,r}.
\end{equation}
Indeed, if $y\in \mathcal{S}^k$ then $y\in \bigcap_{r}\mathcal{S}^{k}_{\eta,r}$ for some $\eta>0$ and it is trivial to see that $\bigcap_{r}\mathcal{S}^{k}_{\eta,r}\subset \mathcal{S}^k$.

The classical stratification is built separating points according to the infinitesimal symmetries of the space. The quantitative stratification instead is based on how many symmetries there are on balls of a definite size at any point.
 
\begin{remark}\label{remark: singular vs quantitative singular}
	In \eqref{eq:sngular vs quantitative singular} we can consider just the union over $0<\eta<\eps$ for some $\eps>0$ fixed, or even over a countable sequence $\eta_i\to 0$.
\end{remark}
\begin{remark}
Let us remark that on a smooth Riemannian manifold the strata $\mathcal{S}^k$ are all empty, instead the effective strata $\mathcal{S}^{k}_{\eta,r}$ are non trivial.
\end{remark}

Let us state the main result of this note, which extends to the synthetic framework the result proved for non collapsed Ricci limit spaces in \cite{CheegerNaber13a}. As we already pointed out in the introduction, this statement has already been useful, very recently, in the proof of \cite[Theorem 5.8]{MondinoKapovitch19}, dealing with stability properties for the boundary of non collapsed $\RCD(K,N)$ spaces.
\begin{theorem}\label{thm:RCDquantitativevolumebound}
	Given $K\in\mathbb{R}$, $N\in [2,+\infty)$, $k\in [1,N)$ and $v,\eta>0$, there exists a constant $c(K,N,v,\eta)>0$ such that if $(X,\dist,\mathcal{H}^N)$ is a $\ncRCD(K,N)$ m.m.s. satisfying
	\begin{equation}\label{VolumeBound}
		\frac{\mathcal{H}^N(B_1(x))}{v_{K,N}(1)}\ge v
		\qquad
		\forall x\in X,
	\end{equation}
	then, for all $x\in X$ and  $0<r<1/2$, it holds	
	\begin{equation}\label{eq:volumebound}
	\mathcal{H}^N(\mathcal{S}^{k}_{\eta,r}\cap B_{1/2}(x))\le c(K,N,v,\eta)r^{N-k-\eta}.
	\end{equation}
\end{theorem}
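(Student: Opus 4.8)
The plan is to run the \emph{quantitative differentiation} scheme of Cheeger--Naber \cite{CheegerNaber13a}, adapted to the $\ncRCD$ setting using the tools assembled in \autoref{sec:preliminaries}. Fix $K$, $N$, $k$, $v$, $\eta$ and a $\ncRCD(K,N)$ space satisfying \eqref{VolumeBound}. It suffices to estimate, for each fixed $x$, the measure of $\mathcal S^k_{\eta,r}\cap B_{1/2}(x)$ at the \emph{dyadic} scales $r=2^{-j}$; a general $r\in(0,1/2)$ is handled by monotonicity \eqref{eq:inclusion singular set}. The key quantity is, for a point $y\in B_{1/2}(x)$ and a scale $s\le 1$, the Bishop--Gromov volume ratio $\theta_s(y)\doteq \mathcal H^N(B_s(y))/v_{K,N}(s)$ (or the normalized Euclidean version $\mathcal H^N(B_s(y))/(\omega_N s^N)$, cf. \autoref{RemarkOnBishopGromov}). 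By \eqref{VolumeBound}, Bishop--Gromov \eqref{prop:BishopGromovInequality} and the density bound \eqref{DensityLessThanOne}, the quantity $\theta_s(y)$ is monotone in $s$ and takes values in a bounded interval $[c_0,1]$ with $c_0=c_0(K,N,v)>0$. Call a scale $s$ \emph{good for $y$} if $\theta_{s}(y)-\theta_{\eta_0 s}(y)<\delta$, where $\eta_0$ and $\delta$ are to be chosen depending on $K,N,\eta$ and on the GH-defect parameter; at a good scale, \autoref{FromAlmostVolumeCone} says $B_{s/2}(y)$ is $\varepsilon s$-GH close to a ball about the tip of a metric cone $C(Z)$ over an $\RCD(N-2,N-1)$ space.

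\textbf{Step 1 (bounded number of bad scales).} Since $\theta$ is monotone and confined to $[c_0,1]$, for each $y$ the number of dyadic scales $s=2^{-j}\in[r,1]$ that are \emph{not} good is at most $Q\doteq (1-c_0)/\delta$, a bound independent of $y$ and $r$. This is the heart of quantitative differentiation: the ``energy'' $1-\theta$ can drop by at least $\delta$ only boundedly many times.

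\textbf{Step 2 (covering at good scales via cone splitting).} Now I run an inductive covering argument on the number $k$ of ``missing symmetries.'' Suppose $B_\rho(y)\subset B_{1/2}(x)$ is a ball on which, at scale $\rho$, one already has an $\varepsilon\rho$-GH splitting $\setR^l\times C(Z)$ (for $l<k$; the base case $l=0$ comes from \autoref{FromAlmostVolumeCone} at a good scale). Cover $\mathcal S^k_{\eta,r}\cap B_\rho(y)$ by balls $B_{\gamma\rho}(y_i)$ with $\gamma=\gamma(N)$ small and the $y_i$ a maximal $\gamma\rho$-separated net, so that the number of such balls is at most $C(N)\gamma^{-N}$ by the doubling property \eqref{eq:locdoubling}. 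At each sub-ball one of two alternatives holds. Either \emph{no} point of $\mathcal S^k_{\eta,r}\cap B_{\gamma\rho}(y_i)$ lies outside the $\tau\rho$-tubular neighbourhood of the spine $F(\setR^l\times\{z^*\})$ of the cone: then (choosing $\tau$ small enough compared with $\eta$, and using that on the spine the space splits $\setR^{l+1}$ up to GH-error $<\eta\rho$, contradicting membership in $\mathcal S^k_{\eta,r}$ when $l+1\ge k+1$, i.e. $l\ge k$, which is excluded; for $l<k$ one instead restricts the covering to the spine, which is a lower-dimensional set and produces only $C(N)\gamma^{-(l+1)}$ sub-balls rather than $C(N)\gamma^{-N}$), so the ``bad'' contribution carries an extra power of $\gamma$. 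Or there \emph{is} a point $x'\in B_{\delta\rho}(y_i)$ outside the tube that lies in the conical set at the finer scale — and here \autoref{thm:conesplittingquant} upgrades the splitting on $B_{\gamma\rho}(y_i)$ to $\setR^{l+1}\times C(\tilde Z)$ with a controlled GH-error $\psi\rho<\varepsilon\gamma\rho$, so the induction on $l$ advances. Iterating $k$ times (or until the split reaches $\setR^{k+1}$, which is impossible on $\mathcal S^k_{\eta,r}$ by definition, forcing the ``bad'' alternative) yields: each good scale contributes a covering of $\mathcal S^k_{\eta,r}$ by balls of the next dyadic size whose number is multiplied by a factor $\le C(N)\gamma^{-k}$ rather than $C(N)\gamma^{-N}$.

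\textbf{Step 3 (counting and conclusion).} Combine the two steps over all $j_0$ dyadic scales from $1$ down to $r=2^{-j_0}$, $j_0\approx \log_2(1/r)$. At bad scales (there are $\le Q$ of them along any chain) one pays the full $C(N)\gamma^{-N}$ refinement factor; at the remaining $\ge j_0-Q$ good scales one pays only $C(N)\gamma^{-k}$. With $\gamma$ fixed so that $2^{-1}=\gamma^{M}$ for a suitable integer $M=M(N)$ (so one dyadic step equals $M$ covering steps), the total number of balls of radius $\sim r$ needed to cover $\mathcal S^k_{\eta,r}\cap B_{1/2}(x)$ is at most
\begin{equation*}
\big(C(N)\gamma^{-N}\big)^{QM}\,\big(C(N)\gamma^{-k}\big)^{(j_0-Q)M}\le C(K,N,v,\eta)\,r^{-k-\eta},
\end{equation*}
where the exponent $k+\eta$ (rather than just $k$) absorbs the bounded-but-nonzero contribution $\gamma^{-NQM}$ of the bad scales: one chooses $\gamma=\gamma(N,\eta)$ small enough that $Q M \log(C\gamma^{-N}) \le \eta\, j_0 M\log\gamma^{-1}$ for all $j_0$ large, which is possible since $Q$ is fixed while $j_0\to\infty$ — more carefully, one tunes $\delta$ (hence $Q$) and $\gamma$ together. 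Since each covering ball $B_r(y_i)$ has $\mathcal H^N(B_r(y_i))\le \omega_N r^N$ by \eqref{DensityLessThanOne} (after comparing $v_{K,N}(r)$ with $\omega_N r^N$), multiplying the ball count by $r^N$ gives \eqref{eq:volumebound}.

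\textbf{Main obstacle.} The delicate point is the bookkeeping in Step 2--3: one must choose the parameters in the right order --- first $\eta$ is given; this fixes the target closeness $\varepsilon$ needed so that a genuine splitting of $\setR^{k+1}$ would contradict $y\in\mathcal S^k_{\eta,r}$; then $\tau$ (tube radius) and $\psi$ (output error of \autoref{thm:conesplittingquant}) must be small relative to $\varepsilon\gamma$; then \autoref{FromAlmostVolumeCone} and \autoref{thm:conesplittingquant} furnish $\delta$ and $\theta$ (the maximal scale); and $\gamma$ must finally be taken small enough to convert the fixed number $Q$ of bad scales into the arbitrarily small loss $\eta$ in the exponent. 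Making sure these dependencies are not circular --- in particular that $\delta$ in the ``good scale'' definition, which controls $Q$, can be chosen after $\gamma$ --- is exactly the subtlety that the Cheeger--Naber argument navigates, and the proof in \autoref{sec:mainresult} will have to reproduce it carefully in the metric setting.
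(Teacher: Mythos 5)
Your proposal follows essentially the same route as the paper: quantitative differentiation for the monotone Bishop--Gromov ratio to bound the number of non-conical scales (the paper's \autoref{lem:AllButFiniteScales} and \autoref{cor:AllButFiniteScales2}), the quantitative cone splitting \autoref{thm:conesplittingquant} to trap the quantitative stratum in a tube around a $k$-dimensional spine at conical scales (the paper's \autoref{cor:almostconicalnear} and \autoref{lem:CoveringLemma}), and a multiscale covering count. The parameter-ordering worry you raise at the end is resolved in the paper exactly as you hope: the efficient covering constant $c_0$ is purely dimensional, so $\gamma=c_0^{-2/\eta}$ can be fixed first, and the (possibly huge) number $Q$ of bad scales only enters through a constant factor $(c_1\gamma^{-N})^{Q}$.

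There is, however, one concrete gap in your Step 3. The set of bad scales depends on the point, so the bound ``$\le Q$ bad scales along any chain'' does not by itself yield the product formula $(C\gamma^{-N})^{Q}(C\gamma^{-k})^{j_0-Q}$ for the \emph{total} number of covering balls: when you refine a ball $B_{\gamma^{j-1}}(y_i)$, it contains both points that are conical at that scale (which the covering lemma handles with $c_0\gamma^{-k}$ sub-balls) and points that are not (for which you must pay $c_1\gamma^{-N}$), and you cannot decide globally which factor to charge at scale $j$. The missing device is the decomposition of $B_{1/2}(x_0)$ into the classes $E_{T^j}$ indexed by the good/bad pattern $T^j\in\{0,1\}^j$ (with $|T^j|\le j_0$ by quantitative differentiation), building a separate covering for each class, and then summing over the at most $\binom{j}{j_0}\le 2j^{j_0}$ nonempty patterns; this polynomial-in-$j$ factor is then absorbed, together with $c_0^{\,j}$, into the $r^{-\eta}$ loss. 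Without this bookkeeping the chain-by-chain count does not control the union. A second, smaller inaccuracy: the $\eta$ in the exponent is not absorbed by the bad scales (those contribute only a fixed constant) but by the $j$-dependent factors $c_0^{\,j}$ and $j^{j_0}$, which is why $\gamma$ must be tuned to $c_0$ and $\eta$.
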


Let us make a few remarks about \eqref{eq:volumebound}. First we wish to prove that it implies the standard Hausdorff dimension estimate $\dim_{H}(\mathcal{S}^k)\le k$.
To do so let us observe that the $\eta r$-enlargement of $\mathcal{S}^k_{2\eta, r}$ is a subset of $\mathcal{S}^k_{\eta,r}$, that is to say
\begin{equation}\label{z3}
T_{\eta r}(\mathcal{S}^k_{2\eta,r})\doteq \set{x\in X:\ \dist(x,\mathcal{S}^k_{2\eta,r})<\eta r}\subset \mathcal{S}^k_{\eta,r}.
\end{equation}
To check \eqref{z3} it is enough to use the triangle inequality: take $x\in T_{\eta r}(\mathcal{S}^k_{2\eta,r})$, by definition there exists $x'\in \mathcal{S}^k_{2\eta,r}$ such that $\dist(x,x')< \eta r$, hence we have
\begin{align*}
	\dist_{GH}(B_s(x),B_s((0,z^*)))&\ge \dist_{GH}(B_s(x'),B_s((0,z^*)))-\dist_{GH}(B_s(x'),B_s(x))\\
	&\ge 2\eta s-\eta s\\
	&=\eta s
\end{align*}
for any $\setR^{k+1}\times C(Z)$ with $z^*$ tip of $C(Z)$ and every $r\le s\le 1$, where in the last inequality we used $x\in \mathcal{S}^k_{2\eta,r}$ and $\dist_{GH}(B_s(x'),B_s(x))\le \dist(x,x')<\eta r\le \eta s$.
With $\eqref{z3}$ at our disposal we can strengthen \eqref{eq:volumebound} obtaining a volume estimate of the $\eta r$-enlargement of the quantitative strata
\begin{equation}\label{eq:22}
	\mathcal{H}^N\left(T_{\eta r}(\mathcal{S}^k_{2\eta,r})\cap B_{1/2}(x)\right)\le c r^{N-k-\eta}
	\qquad
	\forall x\in X.
\end{equation}
In particular, \eqref{eq:22} implies that
\begin{equation}\label{z60}
\mathcal{H}^N\left(T_{\eta r}\left(\bigcap_{s>0}\mathcal{S}^k_{2\eta,s}\right)\cap B_{1/2}(x)\right)\le c r^{N-k-\eta}
\qquad
\text{for any}\  0<r\le 1/2,\quad \forall x\in X,
\end{equation}
that, together with a localized version of \autoref{lemma: tubolar estimate} below, gives
\begin{equation}\label{z4}
	\mathcal{H}^{k+\eta+\eps}\left( \bigcap_{s>0}\mathcal{S}^k_{2\eta,s} \right) =0
	\qquad
	\forall \eps>0.
\end{equation}
Recalling that $\mathcal{S}^k=\bigcup_{\eta>0}\bigcap_{0<s<\eps}\mathcal{S}^{k}_{\eta,s}$ for any $\eps>0$ and that the union in $\eta$ can be taken countable (see \autoref{remark: singular vs quantitative singular}) we get eventually $\dim_{H}(\mathcal{S}^k)\le k$.

\begin{lemma}\label{lemma: tubolar estimate}
	Let $(X,\dist,\mathcal{H}^N)$ be a $\ncRCD(K,N)$ m.m.s. satisfying \eqref{VolumeBound} and let $E\subset X$ be Borel. If for some $0<\alpha\le N$ it holds that
	\begin{equation*}
		\mathcal{H}^N(T_{r_i}(E))\le Cr_i^{N-\alpha}
		\quad \text{for a sequence}\ r_i\downarrow 0,
	\end{equation*}
	then
	\begin{equation*}
		\mathcal{H}^{\alpha}(E)\le c(K,N,\alpha,v)C.
	\end{equation*}
\end{lemma}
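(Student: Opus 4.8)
The plan is to deduce the Hausdorff measure bound on $E$ from the volume bounds on its tubular neighbourhoods via a covering argument, using the locally uniform doubling property \eqref{eq:locdoubling} (which follows from \eqref{VolumeBound} and Bishop-Gromov) to convert measures of balls into radii to the power $N$, hence estimates on $\mathcal{H}^N$ of neighbourhoods into estimates on the packing/covering numbers of $E$ at scale $r_i$.

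First I would fix $i$ and consider a maximal $r_i$-separated subset $\{x_1,\dots,x_{M_i}\}$ of $E$ (maximal with respect to inclusion, so that the balls $B_{r_i}(x_j)$ cover $E$ while the balls $B_{r_i/2}(x_j)$ are pairwise disjoint). The disjoint half-balls all sit inside $T_{r_i}(E)$ (indeed inside $T_{3r_i/2}(E)$, but $B_{r_i/2}(x_j)\subset T_{r_i}(E)$ already since their centres lie in $E$), so additivity gives $\sum_{j=1}^{M_i}\mathcal{H}^N(B_{r_i/2}(x_j))\le \mathcal{H}^N(T_{r_i}(E))\le Cr_i^{N-\alpha}$. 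Next, the lower volume bound \eqref{VolumeBound} together with Bishop-Gromov \eqref{prop:BishopGromovInequality} yields a lower bound $\mathcal{H}^N(B_{r_i/2}(x_j))\ge c_0(K,N,v)\, r_i^{N}$ for all $r_i\le 1/2$ (comparing the ball of radius $r_i/2$ with the ball of radius $1$ via the Bishop-Gromov ratio and using that $v_{K,N}(r)/r^N$ is bounded above and below for $r\in(0,1)$, exactly as in the estimate \eqref{BoundOnVolume} in the proof of \autoref{FromAlmostVolumeCone}). Combining, $M_i\, c_0 r_i^N\le C r_i^{N-\alpha}$, whence $M_i\le (C/c_0)\, r_i^{-\alpha}$.

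Then I would estimate the $\alpha$-dimensional Hausdorff pre-measure of $E$ at scale $2r_i$: since $\{B_{r_i}(x_j)\}_{j=1}^{M_i}$ covers $E$ with sets of diameter $\le 2r_i$, we get $\mathcal{H}^{\alpha}_{2r_i}(E)\le \sum_{j=1}^{M_i}(2r_i)^{\alpha}= M_i\,(2r_i)^{\alpha}\le 2^{\alpha}(C/c_0)$. Letting $i\to\infty$ (so that $r_i\downarrow 0$ and the diameter constraint vanishes) gives $\mathcal{H}^{\alpha}(E)\le 2^{\alpha}(C/c_0)= c(K,N,\alpha,v)\,C$, which is the claim. (If one prefers the normalisation of $\mathcal{H}^{\alpha}$ with the $\omega_{\alpha}$ factor, it only changes the dimensional constant.)

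The only genuinely delicate point is the uniform lower volume bound $\mathcal{H}^N(B_s(x))\ge c_0(K,N,v)s^N$ for $x\in X$ and $0<s\le 1/2$; everything else is the standard covering-to-Hausdorff-measure mechanism. That bound is immediate from Bishop-Gromov: $\mathcal{H}^N(B_s(x))/v_{K,N}(s)\ge \mathcal{H}^N(B_1(x))/v_{K,N}(1)\ge v$ for $s\le 1$, and $v_{K,N}(s)\ge c_1(K,N)s^N$ on $(0,1]$, so $c_0= v\,c_1$ works; since this uses only \eqref{VolumeBound} and \eqref{prop:BishopGromovInequality} it needs no further input. A minor technical remark is that one should take the separated set inside $E$ and use that $B_{r_i/2}(x_j)\subset T_{r_i}(E)$ for $x_j\in E$ so that the hypothesis applies on the nose; no issue arises from $E$ being merely Borel since we only use monotonicity and additivity of $\mathcal{H}^N$ on Borel sets.
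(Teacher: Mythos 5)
Your proof is correct and follows essentially the same route as the paper: a maximal separated net in $E$ (the paper uses the $5r$-covering theorem instead, an immaterial difference) gives disjoint balls inside $T_{r_i}(E)$, the Bishop--Gromov lower volume bound \eqref{VolumeBound} converts the hypothesis into the packing bound $M_i\lesssim r_i^{-\alpha}$, and the resulting cover controls $\mathcal{H}^{\alpha}_{2r_i}(E)$ uniformly in $i$. No gaps.
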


\begin{proof}
	Let us fix $0<r_i<1$ and $\delta \geq 10 r_i$.
	By means of a standard covering theorem (see \cite[Theorem 1.2]{Heinonen01}) we can find a finite family of points $x_1,...,x_m$ (a priori a \emph{countable} family, but finite if we take into account the estimate \eqref{z5} below) in $E$ such that $\set{B_{r_i}(x_k)}_{k=1,...,m}$ is disjoint and $E\subset \bigcup_{k=1}^m B_{5r_i}(x_k)$. Let us estimate $m$. From the inclusion $\bigcup_{k=1}^mB_{r_i}(x_k)\subset T_{r_i}(E)$ and the fact that $\set{B_{r_i}(x_k)}_{k=1,...,m}$ is a disjoint family we deduce
	\begin{equation*}
		\sum_{k=1}^m \mathcal{H}^N(B_{r_i}(x_k))\le 	\mathcal{H}^N(T_{r_i}(E))\le Cr_i^{N-\alpha}.
	\end{equation*}
	On the other hand the Bishop-Gromov inequality and \eqref{VolumeBound} grant
	\begin{equation*}
		\mathcal{H}^N(B_{r_i}(x_k))\geq v_{K,N}(r_i) \frac{\mathcal{H}^N(B_1(x_k))}{v_{K,N}(1)}
		\ge c r_i^N v,
	\end{equation*}
	where $c>0$ depends only on $K$ and $N$.
	Thus 
	\begin{equation}\label{z5}
		m\le \frac{C}{cv} r_i^{-\alpha}.
	\end{equation}
	Since $E\subset \bigcup_{k=1}^m B_{5r_i}(x_k)$ and $\delta > 10 r_i$, we get
	\begin{equation*}
		\mathcal{H}_{\delta}^{\alpha}(E)\le c(\alpha) \sum_{i=1}^m (\text{diam}(B_{5r_i}(x_k)))^{\alpha}\le mc'r_i^{\alpha} \le \frac{Cc'}{cv},
	\end{equation*}
	where $c'>0$ depends only on $\alpha$ and we used \eqref{z5} in the last passage. Letting $\delta\to 0$ we obtain the sought conclusion.
\end{proof}

Let us also mention that, even though \eqref{eq:volumebound} is stronger than $\dim_{H}(\mathcal{S}^k)\le k$ it does not imply
\begin{equation*}
	\mathcal{H}^k\left(\bigcap_{r>0}\mathcal{S}^k_{\eta,r}\cap B_{1/2}(x)\right)<\infty,
\end{equation*}
one of the problems being the term $r^{\eta}$ appearing at the right hand side of \eqref{eq:volumebound}. An improvement in this direction is one of the fundamental results in \cite{CheegerJangNaber18}.

\subsubsection{Estimate for the $r$-enlargement of the boundary}
In \cite{DePhilippisGigli18} the authors have proposed a definition of boundary $\partial X$ of a $\ncRCD(K,N)$ m.m.s. $X$ as
\begin{equation*}
	\partial X\doteq\text{closure of } \mathcal{S}^{N-1}\setminus \mathcal{S}^{N-2}.
\end{equation*}
We can use \autoref{thm:RCDquantitativevolumebound} to estimate the measure of the $r$-enlargement of $\partial X$.
\begin{corollary}
	Given $K\in\mathbb{R}$, $N\in [2,+\infty)$ and $v, \eta >0$, there exist $c(K,N,v,\eta)>0$ and $r(K,N)>0$ such that, if $(X,\dist,\mathcal{H}^N)$ is a $\ncRCD(K,N)$ m.m.s. satisfying \eqref{VolumeBound}, then, for all $x\in X$ and $0<r<r(K,N)$, it holds	
	\begin{equation*}
	\mathcal{H}^N(T_r(\partial X)\cap B_{1/2}(x))\le c(K,N,v,\eta)r^{1
		-\eta}.
	\end{equation*}
\end{corollary}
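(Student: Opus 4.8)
The plan is to deduce the corollary directly from \autoref{thm:RCDquantitativevolumebound} by covering the $r$-enlargement $T_r(\partial X)$ with the $\eta r$-enlargement of a suitable effective stratum, at the cost of comparing scales. Since $\partial X$ is the closure of $\mathcal{S}^{N-1}\setminus\mathcal{S}^{N-2}$, and $\mathcal{S}^{N-1}\setminus\mathcal{S}^{N-2}\subset\mathcal{S}^{N-1}=\bigcup_{\eta'>0}\bigcap_{s>0}\mathcal{S}^{N-1}_{\eta',s}$, I would first fix the parameter $\eta$ from the statement and observe that the set $\bigcap_{s>0}\mathcal{S}^{N-1}_{2\eta,s}$ is already closed (an intersection of closed sets, each $\mathcal{S}^{k}_{\eta,r}$ being closed because the Gromov--Hausdorff distance depends continuously on the center). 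The key containment I want is
\begin{equation*}
	T_r(\partial X)\subset T_{2r}\Bigl(\bigcap_{s>0}\mathcal{S}^{N-1}_{2\eta,s}\Bigr)\cup\bigl(X\setminus\mathcal{S}^{N-1}\bigr)\text{-issues},
\end{equation*}
so let me be more careful: the clean route is to note that $T_r(\partial X)=T_r\bigl(\overline{\mathcal{S}^{N-1}\setminus\mathcal{S}^{N-2}}\bigr)=\overline{T_r(\mathcal{S}^{N-1}\setminus\mathcal{S}^{N-2})}$ up to adjusting the radius slightly (the $r$-enlargement of a set and of its closure differ only on a boundary set of measure controlled by a slightly larger enlargement), and that $\mathcal{S}^{N-1}\setminus\mathcal{S}^{N-2}\subset\mathcal{S}^{N-1}=\bigcup_{\eta_i\downarrow 0}\bigcap_{s>0}\mathcal{S}^{N-1}_{\eta_i,s}$ by \eqref{eq:sngular vs quantitative singular} and \autoref{remark: singular vs quantitative singular}.

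\textbf{Reduction to a fixed stratum.} The subtlety is that $\mathcal{S}^{N-1}$ is a countable union over $\eta_i$, whereas the estimate \eqref{eq:22} only controls one fixed level $\mathcal{S}^{N-1}_{2\eta,r}$ at a time; a naive union would destroy the bound. The resolution, exactly as in the Hausdorff-dimension argument following \autoref{thm:RCDquantitativevolumebound}, is that we do \emph{not} need the full boundary with its limiting behaviour --- we only need, for each fixed $r$, to cover $T_r(\partial X)\cap B_{1/2}(x)$ by $T_{Cr}(\mathcal{S}^{N-1}_{2\eta,r})$ for the \emph{single} value $2\eta$ appearing in the statement. Indeed, if $y\in\partial X$ then $y$ is a limit of points in $\mathcal{S}^{N-1}\setminus\mathcal{S}^{N-2}\subset\mathcal{S}^{N-1}$; but being in $\mathcal{S}^{N-1}$ does not a priori force $y\in\mathcal{S}^{N-1}_{2\eta,r}$ for our particular $\eta$. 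So the honest statement one must use is: for \emph{every} $y\in X$ and every scale $r$, if $y\notin\mathcal{S}^{N-1}_{2\eta,r}$ then there is a scale $s\in[r,1]$ at which $B_s(y)$ is $2\eta s$-GH-close to a ball in some $\setR^N\times C(Z)=\setR^{N}$-splitting cone --- which by the cone-splitting/rigidity machinery does not directly preclude $y\in\partial X$. Consequently the corollary as literally stated must be read as: the $r$-enlargement of $\mathcal{S}^{N-1}_{2\eta,r}\setminus\mathcal{S}^{N-2}$, or more precisely one applies \eqref{eq:22} with $k=N-1$ to get $\mathcal{H}^N(T_{2\eta r}(\mathcal{S}^{N-1}_{4\eta,r})\cap B_{1/2}(x))\le c\,r^{N-(N-1)-2\eta}=c\,r^{1-2\eta}$, and then absorbs the factor $2\eta$ into a relabelling of $\eta$ (replace $\eta$ by $\eta/2$ throughout and rename constants), giving $c\,r^{1-\eta}$.

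\textbf{Carrying it out.} Concretely: given $\eta>0$ set $\eta'=\eta/4$ and apply \autoref{thm:RCDquantitativevolumebound} with $k=N-1$ and parameter $2\eta'$, then run the triangle-inequality argument \eqref{z3} to obtain, as in \eqref{eq:22}, $\mathcal{H}^N\bigl(T_{\eta' r}(\mathcal{S}^{N-1}_{2\eta',r})\cap B_{1/2}(x)\bigr)\le c(K,N,v,\eta)\,r^{1-2\eta'}=c\,r^{1-\eta/2}$ for all $x$ and all $0<r<1/2$; since $r^{1-\eta/2}\le r^{1-\eta}$ for $r\le 1$ this is $\le c\,r^{1-\eta}$. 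Now for $r$ smaller than some $r(K,N)$ one checks that $T_r(\partial X)\cap B_{1/2}(x)$ is contained in $T_{\eta' r}(\mathcal{S}^{N-1}_{2\eta',r})\cap B_{3/4}(x)$ after passing to a slightly larger ball --- this uses that $\mathcal{S}^{N-1}\setminus\mathcal{S}^{N-2}$, hence its closure $\partial X$, is contained in $\mathcal{S}^{N-1}=\bigcap_s\mathcal{S}^{N-1}_{2\eta',s}\subset\mathcal{S}^{N-1}_{2\eta',r}$ \emph{when restricted to points that are genuinely singular at scale $r$}; the remaining points, which have a near-$\setR^N$ splitting at some scale, lie in the regular set up to a set of measure zero and contribute nothing, while a covering of $B_{1/2}(x)$ by finitely many balls $B_{1/2}(x_j)$ handles the change of ambient ball. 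Summing the estimate over this finite cover (with cardinality bounded by the doubling constant, depending only on $K,N$) preserves the form $c(K,N,v,\eta)\,r^{1-\eta}$.

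\textbf{Main obstacle.} The genuinely delicate point --- and the only place where something beyond bookkeeping happens --- is justifying the containment $T_r(\partial X)\cap B_{1/2}(x)\subset T_{C\eta r}(\mathcal{S}^{k}_{c\eta,r})$ for $k=N-1$, i.e.\ relating the \emph{topological} boundary $\partial X$ (a closure of a difference of classical strata) to the \emph{effective} stratum at a definite scale. The honest mechanism is: $\partial X\subset\mathcal{S}^{N-1}$, and for any $y\in\mathcal{S}^{N-1}$ either $y\in\bigcap_{s>0}\mathcal{S}^{N-1}_{2\eta',s}\subset\mathcal{S}^{N-1}_{2\eta',r}$, or there is a scale $s>0$ with $\mathcal{N}$ small, i.e.\ $B_s(y)$ close to an $\setR^N$-splitting cone; in the noncollapsed setting an $\setR^N$-splitting $\RCD(0,N)$ cone is forced to be $\setR^N$ itself, so by \autoref{FromAlmostVolumeCone}-type rigidity such $y$ lie in the set where the density is close to $1$, a set of full measure consisting of regular points, contributing $0$ to $\mathcal{H}^N$. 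The careful packaging of this dichotomy, and the verification that the closure operation in the definition of $\partial X$ only enlarges things by a controlled $r$-tube, is where I would spend the bulk of the write-up; everything else is the Vitali-covering argument already performed in \autoref{lemma: tubolar estimate} and the triangle inequality already performed around \eqref{z3}.
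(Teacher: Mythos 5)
Your reduction to \autoref{thm:RCDquantitativevolumebound} via \eqref{z3}--\eqref{z60} is the right outer shell, but the step you yourself flag as the main obstacle --- placing $\partial X$ inside a single effective stratum at every scale --- is exactly where your argument breaks, and the dichotomy you propose does not repair it. The quantity to be bounded is $\mathcal{H}^N(T_r(\partial X)\cap B_{1/2}(x))$, the measure of an \emph{enlargement}; so splitting $\partial X$ into a ``good'' part contained in $\bigcap_{s>0}\mathcal{S}^{N-1}_{2\eta',s}$ and a ``bad'' part that you discard because it ``contributes $0$ to $\mathcal{H}^N$'' is a non sequitur. A subset of $\partial X$ of zero $\mathcal{H}^N$-measure can still have an $r$-enlargement whose measure is of order $r$ --- precisely the order of the bound you are trying to prove --- with no control on the constant; and in any case ``density close to $1$'' does not make a point regular, nor does your dichotomy show the bad set is empty.

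The missing idea, which is the actual content of the paper's proof, is that there exists a \emph{universal} $\eta(N)>0$, independent of the $\eta$ in the statement, such that $\mathcal{S}^{N-1}\setminus\mathcal{S}^{N-2}\subset\bigcap_{r>0}\mathcal{S}^{N-1}_{\eta(N),r}$ with no exceptional set whatsoever. This comes from \cite[Theorem 1.3]{DePhilippisGigli18}: every $z\in\mathcal{S}^{N-1}\setminus\mathcal{S}^{N-2}$ has Bishop--Gromov density exactly $1/2$, hence by monotonicity $\mathcal{H}^N(B_s(z))/(\omega_N s^N)\le 3/4$ for all $0<s<1$ (after arranging $v_{K,N}(s)/(\omega_N s^N)\le 3/2$, which is the reason for the preliminary restriction on $K$ removed afterwards by a covering and rescaling step), and the quantitative volume rigidity then forces $\dist_{GH}(B_s(z),B_s(0^N))\ge \eta(N)\,s$ at every scale, i.e.\ membership in $\mathcal{S}^{N-1}_{\eta(N),s}$ for all $s$. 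Since $\bigcap_{r>0}\mathcal{S}^{N-1}_{\eta(N),r}$ is closed, it also contains $\partial X$, and \eqref{z60} applied with parameter at most $\eta(N)/2$ (and at most the $\eta$ of the statement) yields the estimate; the threshold $r(K,N)$ and the final summation over a bounded cover are then routine. Your write-up never produces this uniform $\eta(N)$ and instead tries to work with the arbitrary $\eta$ of the statement, which is what forces you into the untenable measure-zero escape.
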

\begin{proof}
	Let us denote by $k>0$ the biggest constant such that
	\begin{equation}\label{z61}
		\frac{v_{-s,N}(1)}{\omega_N}\le \frac{3}{2}
		\quad \text{for any } 0<s<k.
	\end{equation}
	Note that $k$ depends only on $N$. The proof is divided in two steps.\\
	\textbf{Step1.} Aim of this first step is to prove our conclusion under the additional assumption $K>-k$.\\	
	Let us first observe that, for any $z\in \mathcal{S}^{N-1}\setminus \mathcal{S}^{N-2}$, the Euclidean half space of dimension $N$ belongs to $\Tan_z(X,\dist,\mathcal{H}^N)$. To check this statement we build upon three ingredients. The first is that, by the very definition of the singular strata, $\Tan_z(X,\dist,\mathcal{H}^N)$ must contain a $\ncRCD(0,N)$ m.m.s. that splits off $\setR^{N-1}$ but not $\setR^N$. The second ingredient is the characterization of $\RCD(0,1)$ spaces provided in \cite{KitabeppuLakzian16} and the last one is the fact that tangent cones are metric cones (see \autoref{thm:tangentconesaremetricocnes}).
	
	Let $z\in \mathcal{S}^{N-1}\setminus \mathcal{S}^{N-2}$ be fixed. 
	Applying \cite[Theorem 1.3]{DePhilippisGigli18} we get that $\vartheta_N[X,\dist,\mathcal{H}^N](z)=1/2$ (see \eqref{BishopGromovDensity} for the definition of $\vartheta_N$). Thus, as a consequence of \eqref{prop:BishopGromovInequality} and \eqref{z61}, we have
	\begin{equation*}
		\frac{\mathcal{H}^N(B_r(z))}{\omega_Nr^N}=\frac{\mathcal{H}^N(B_r(z))}{v_{K,N}(r)}\ \frac{v_{K,N}(r)}{\omega_N r^N}
		\le \vartheta_N[X,\dist,\mathcal{H}^N](z) \frac{3}{2}\le \frac{3}{4}
		\quad\text{for any }0<r<1.
	\end{equation*}
	Using again \cite[Theorem 1.3]{DePhilippisGigli18} we deduce that there exists $\eta(N)>0$ such that
	\begin{equation*}
	z\in \mathcal{S}^{N-1}\setminus\mathcal{S}^{N-2}\implies 	\dist_{GH}(B_r(z),B_r(0^N))\ge r\eta(N)\quad\text{for any }0<r<1,
	\end{equation*}
	therefore $\mathcal{S}^{N-1}\setminus \mathcal{S}^{N-2}\subset \bigcap_{r>0} \mathcal{S}_{\eta(N),r}^{N-1}$. Since the set in the right hand side is closed one has
	\begin{equation*}
	\partial X\subset \bigcap_{r>0} \mathcal{S}_{\eta(N),r}^{N-1}.
	\end{equation*}
	Thus, using \eqref{z60} with $0<\eta\le \eta(N)/2$, we deduce
	\begin{equation}\label{z62}
		\mathcal{H}^N(T_r(\partial X)\cap B_{1/2}(x))\le c(K,N,v,\eta)r^{1-\eta}\quad \text{for any }0<r<\frac{1}{2}\eta\ \text{ and } x\in X.
	\end{equation}
	It is simple to see that, up to increase the constant $c$ one can improve \eqref{z62} obtaining the following statement: for any $\eta>0$ it holds
	\begin{equation}\label{z63}
	\mathcal{H}^N(T_r(\partial X)\cap B_{1/2}(x))\le c(K,N,v,\eta)r^{1-\eta}\quad \text{for any }0<r<\frac{1}{4}\eta(N)\ \text{and } x\in X,
	\end{equation}
	therefore, setting $r(N):=\eta(N)/4$, we have the sought estimate.
	
	\textbf{Step2.} Let us remove the assumption $K>-k$ by means of a covering and scaling argument.\\	
	We can assume without loss of generality that $K<0$.
	Fix $x\in X$ and $s>0$ such that $-Ks^2=k$.
	Arguing as in the proof of \autoref{lemma: tubolar estimate} we can find $x_1,...,x_m$ in $X$ such that $B_{1/2}(x)\subset \cup_{i=1}^mB_{s/2}(x_i)$ and $m$ is bounded by an explicit constant depending only on $N$ and $K$.
	For any $i=1,...,m$ and $\eta>0$ we apply \eqref{z63} to the space $(X,s^{-1}\dist,\mathcal{H}_{s^{-1}\dist}^N)$ obtaining
	\begin{equation}\label{z64}
		\mathcal{H}^N(T_{rs}(\partial X)\cap B_{s/2}(x_i))\le c(k,N,v,\eta) s^N r^{1-\eta}
		\quad \text{for any }0<r<r(N).
	\end{equation}
	Taking the sum over $i=1,\dots,m$ in \eqref{z64} and using the fact that $s$ depends only on $N$ and $K$ we conclude the proof.	
\end{proof}

\subsection{Proof of \autoref{thm:RCDquantitativevolumebound}}
\subsubsection{A lemma in the spirit of quantitative differentiation}

The arrival point of this subsection is \autoref{cor:AllButFiniteScales2} which, roughly speaking, ensures that, on all but a definite number of scales around every point of $X$, the space is as close as we like to the conical structure. To this aim
we need a lemma which, together with the almost rigidity result about metric cones proved in \autoref{FromAlmostVolumeCone}, will give us the sought result. In this lemma we use a technique reminding the general machinery of quantitative differentiation (see \cite{Cheeger12}). We recall here the definition of conicality given in \autoref{def:Conicality}.
\begin{definition}
	Given a metric space $(X,\dist)$, we define the $t$-conicality of the ball $B_r(x)$ as
	\begin{equation}\label{eq:defN}
	\mathcal{N}_t(B_r(x)) \doteq \inf_{\varepsilon > 0} \left\{ \exists \ Z \  \mbox{metric cone and $\RCD(0,N)$ space with tip $z$} \ \mbox{s.t.} \ \dist_{GH}\left(\bar{B}_{\frac{tr}{2}}(x),\bar{B}_{\frac{tr}{2}}(z)\right) \leq \frac{\varepsilon r }{2} \right\}.
	\end{equation}
\end{definition}


\begin{definition}\label{VolumeEnergy}
	Given an $\RCD(K,N)$ m.m.s. $\left(X,\dist,\meas\right)$ for some $K\in \mathbb{R}$ and $N\in[1,+\infty)$, given $x\in X$ and $R>r>0$ we define the $(R,r)$-volume energy around $x$ as
	\begin{equation}
	\mathcal{W}_{R,r}(x)\doteq \log\left(\frac{\meas(B_r(x))}{v_{K,N}(r)}\cdot\left(\frac{\meas(B_R(x))}{v_{K,N}(R)}\right)^{-1}\right).
	\end{equation}
\end{definition}
\begin{remark}
	It follows from \eqref{prop:BishopGromovInequality} that if $(X,\dist,\meas)$ is an $\RCD(K,N)$ space and $x\in X$,
	\begin{equation}
	W_{R,r}(x) \geq 0.
	\end{equation}	
	Moreover, given any $R_1> r_1\geq R_2 > r_2$, it holds
	\begin{equation}\label{PropertyOfVolumeEnergy}
	\mathcal{W}_{R_1,r_2}(x) \geq \mathcal{W}_{R_1,r_1}(x)+\mathcal{W}_{R_2,r_2}(x)
	\end{equation}
	with equality if $r_1=R_2$.
\end{remark}

\begin{lemma}\label{lem:AllButFiniteScales}
	Given $k>1$, $0<\gamma<1/2$, $v>0$ and $\delta>0$, there exists $i_0\doteq i_0(k,\gamma,v,\delta)\in\mathbb{N}$ such that the following holds. 
	If $\left(X,\dist,\mathcal{H}^N\right)$ is a $\ncRCD(K,N)$ space with $N\in[1,+\infty)$ and $K\in\mathbb{R}$ satisfying \eqref{VolumeBound}, then
	 for any $x\in X$
	\begin{equation}\label{Thesis}
	\left| \left\{i \in \mathbb{N} : \mathcal{W}_{k\gamma^i,\gamma^i}(x) > \delta\right\} \right| \leq i_0,
	\end{equation}
	where $\mathcal{W}$ is defined in \eqref{VolumeEnergy}.
\end{lemma}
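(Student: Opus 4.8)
The plan is a quantitative differentiation (telescoping) argument, based on the superadditivity \eqref{PropertyOfVolumeEnergy} of the volume energy $\mathcal{W}$ together with a uniform a priori bound on $\mathcal{W}_{R,r}(x)$ coming from the non collapsing hypothesis.

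First I would establish the key a priori bound: setting $\Lambda:=\log(1/v)\ge 0$ (we may assume $v\le 1$, the statement being vacuous otherwise), one has $0\le\mathcal{W}_{R,r}(x)\le\Lambda$ for every $x\in X$ and every $0<r<R\le 1$. Nonnegativity is \eqref{prop:BishopGromovInequality}. For the upper bound, by the Bishop--Gromov monotonicity the map $s\mapsto\mathcal{H}^N(B_s(x))/v_{K,N}(s)$ is nonincreasing, so on the one hand $\mathcal{H}^N(B_r(x))/v_{K,N}(r)\le\vartheta_N[X,\dist,\mathcal{H}^N](x)\le 1$ by \autoref{DensityLessThanOne}, and on the other hand $\mathcal{H}^N(B_R(x))/v_{K,N}(R)\ge\mathcal{H}^N(B_1(x))/v_{K,N}(1)\ge v$ by \eqref{prop:BishopGromovInequality} and \eqref{VolumeBound}; taking logarithms yields $\mathcal{W}_{R,r}(x)\le\log(1/v)$. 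This is the only step where both the non collapsing bound and the ncRCD structure are used, and it is also the reason why only scales $\le 1$ can be inserted in the telescoping below.

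Next, since $k>1$ and $0<\gamma<1$, fix the integer $m=m(k,\gamma):=\lceil\log k/\log(1/\gamma)\rceil\ge 1$, so that $k\gamma^m\le 1$; consequently $k\gamma^{i+m}\le\gamma^i$ for every $i\in\mathbb{N}$, and $k\gamma^i\le 1$ whenever $i\ge m$. Fix $x\in X$ and put $B:=\{i\in\mathbb{N}:\ i\ge m,\ \mathcal{W}_{k\gamma^i,\gamma^i}(x)>\delta\}$. If $j_0<j_1<\dots<j_p$ are elements of $B$ with $j_{l+1}-j_l\ge m$ for all $l$, then $\gamma^{j_l}\ge k\gamma^{j_{l+1}}$ (as $k\gamma^m\le 1$), so the radii $k\gamma^{j_0}>\gamma^{j_0}\ge k\gamma^{j_1}>\gamma^{j_1}\ge\dots\ge k\gamma^{j_p}>\gamma^{j_p}$ are properly nested, and iterating \eqref{PropertyOfVolumeEnergy} gives
\[
\mathcal{W}_{k\gamma^{j_0},\,\gamma^{j_p}}(x)\ \ge\ \sum_{l=0}^{p}\mathcal{W}_{k\gamma^{j_l},\,\gamma^{j_l}}(x)\ >\ (p+1)\,\delta.
\]
Since $j_0\ge m$ we have $k\gamma^{j_0}\le 1$ and $\gamma^{j_p}<k\gamma^{j_0}$, so by the first step the left-hand side is $\le\Lambda$, whence $p+1\le\Lambda/\delta$. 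Thus every arithmetic progression of step $\ge m$ contained in $B$ has at most $\Lambda/\delta$ elements; splitting $B$ into residue classes modulo $m$ therefore yields $|B|\le m(\Lambda/\delta+1)$.

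Finally, since there are at most $m$ values of $i\in\mathbb{N}$ with $i<m$, we conclude
\[
\bigl|\{i\in\mathbb{N}:\ \mathcal{W}_{k\gamma^i,\gamma^i}(x)>\delta\}\bigr|\ \le\ m+m\Bigl(\tfrac{\Lambda}{\delta}+1\Bigr)\ =:\ i_0(k,\gamma,v,\delta),
\]
a quantity independent of $x$, of $N$ and of $K$, as required. Apart from the a priori bound, the whole argument is elementary bookkeeping, and the only (mild) obstacle is making sure the nesting hypotheses in the superadditivity \eqref{PropertyOfVolumeEnergy} are respected --- which is precisely what the choice of the gap $m$ is designed to guarantee.
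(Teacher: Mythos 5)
Your proof is correct and follows essentially the same route as the paper's: the a priori bound $\mathcal{W}\le\log(1/v)$ via Bishop--Gromov monotonicity, \eqref{VolumeBound} and \autoref{DensityLessThanOne}, the telescoping of \eqref{PropertyOfVolumeEnergy} over nested scales, and the pigeonhole splitting into $m=\lceil\log k/\log(1/\gamma)\rceil$ classes of disjoint intervals. The only cosmetic difference is that you organize the pigeonhole via residue classes modulo $m$ and state the uniform bound $\mathcal{W}_{R,r}\le\log(1/v)$ for all $0<r<R\le 1$ rather than only for the extreme radii, which changes nothing of substance.
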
 

\begin{proof}
	Let $x\in X$ and choose $i_1<i_2<\dots<i_n$ natural numbers such that the intervals $[\gamma^{i_1},k\gamma^{i_1}], \dots [\gamma^{i_n},k\gamma^{i_n}]$ are disjoint and $k\gamma^{i_1} < 1$. An iterative application of \eqref{PropertyOfVolumeEnergy} gives
	\begin{equation}\label{SumOfVolumeEnergy}
	\sum_{j=1}^n \mathcal{W}_{k\gamma^{i_j},\gamma^{i_j}}(x) \leq \mathcal{W}_{k\gamma^{i_1},\gamma^{i_n}}(x).
	\end{equation}
	Now, since $k\gamma^{i_1} <1$, by \eqref{prop:BishopGromovInequality} and the volume bound \eqref{VolumeBound} we get
	\begin{equation}
	\frac{\mathcal{H}^N\left(B_{k\gamma^{i_1}}(x)\right)}{v_{K,N}\left(k\gamma^{i_1}\right)} \geq \frac{\mathcal{H}^N(B_1(x))}{v_{K,N}(1)} \geq v, 
	\end{equation}
	and also
	\begin{equation}
	\frac{\mathcal{H}^N\left(B_{\gamma^{i_n}}(x)\right)}{v_{K,N}\left(\gamma^{i_n}\right)} \leq 1
	\end{equation}
	by \autoref{DensityLessThanOne}. Monotonicity of the logarithm tells that
	\begin{equation}
	\mathcal{W}_{k\gamma^{i_1},\gamma^{i_n}}(x) = \log\left(\frac{\mathcal{H}^N\left(B_{\gamma^{i_n}}(x)\right)}{v_{K,N}\left(\gamma^{i_n}\right)}\cdot\left(\frac{\mathcal{H}^N\left(B_{k\gamma^{i_1}}(x)\right)}{v_{K,N}\left(k\gamma^{i_1}\right)}\right)^{-1}\right) \leq \log\frac{1}{v}
	\end{equation}
	so that, by \eqref{SumOfVolumeEnergy}, it follows
	\begin{equation}\label{SumOfVolumeEnergy2}
	\sum_{j=1}^n \mathcal{W}_{k\gamma^{i_j},\gamma^{i_j}}(x) \leq \log\frac{1}{v}.
	\end{equation}	
	Then, denoting by  $\left\lceil x \right\rceil$ the least integer greater than or equal to $x\in \setR$, the conclusion follows from \eqref{SumOfVolumeEnergy2} choosing
	\begin{equation}
	i_0\geq  \left\lceil-\frac{\log k}{\log \gamma}+1\right\rceil\cdot\delta^{-1}\cdot\log\frac{1}{v}+\left\lceil-\frac{\log k}{\log \gamma}+1\right\rceil.
	\end{equation}
	Indeed, if by contradiction we have the opposite inequality in \eqref{Thesis}, then, excluding the first $\left\lceil-\frac{\log k}{\log \gamma}+1\right\rceil$ terms (i.e. working with the $i$'s such that $k\gamma^i<1$) we have
	\begin{equation}\label{Pigeonhole}
	\left|\left\{i \in \mathbb{N} : k\gamma^i<1 \wedge \mathcal{W}_{k\gamma^i,\gamma^i}(x) > \delta\right\}\right|> \left\lceil-\frac{\log k}{\log \gamma}+1\right\rceil\cdot\delta^{-1}\cdot\log\frac{1}{v}
	\end{equation} 
	and then, dividing the set of all the intervals of the form $[\gamma^i,k\gamma^i]$ with $k\gamma^i<1$ in $\left\lceil-\frac{\log k}{\log \gamma}+1\right\rceil$ subsets made of disjoint intervals, a simple pigeonhole with \eqref{Pigeonhole} tells us that there exist $n\geq\delta^{-1}\log\frac{1}{v}$ disjoint intervals $[\gamma^{i_1},k\gamma^{i_1}],\dots,[\gamma^{i_n},k\gamma^{i_n}]$ with $k\gamma^{i_1}<1$ on which $\mathcal{W}_{k\gamma^i,\gamma^i}(x) > \delta$. Combining this observation with \eqref{SumOfVolumeEnergy2} we obtain a contradiction.
\end{proof}

Now we want to prove an analogous of \autoref{FromAlmostVolumeCone} in this setting. We will measure the closeness to a metric cone by means of the notion of conicality introduced in \autoref{def:Conicality}.
\begin{proposition}\label{prop:FromWtoGHclose}
	Let $K\in\mathbb{R}$, $N\geq 2$, $k>1$, $v>0$ and $\varepsilon>0$ be fixed. Then there exists $0<\delta\doteq \delta(K,N,k,v,\varepsilon)<1$ such that the following holds. If $\left(X,\dist,\mathcal{H}^N\right)$ is a $\ncRCD(K,N)$ space satisfying the volume bound \eqref{VolumeBound} and there exist $0<r<\delta$ and $x\in X$ such that
	\begin{equation}
	\mathcal{W}_{kr,r}(x) \leq \delta,
	\end{equation}
	then 
	\begin{equation}\label{ConclusionOnConicality}
	\mathcal{N}_k\left(B_r(x)\right) \leq \varepsilon.
	\end{equation}
\end{proposition}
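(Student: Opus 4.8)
The plan is to recognise that the hypothesis $\mathcal{W}_{kr,r}(x)\le\delta$ is, after exponentiating, exactly the almost-volume-cone pinching appearing in \autoref{FromAlmostVolumeCone} for the choice $R=kr$ and $\eta=1/k$, and that the conclusion $\mathcal{N}_k(B_r(x))\le\varepsilon$ is precisely the almost-metric-cone estimate produced by that theorem once the radii and the error parameter are matched. In other words, \autoref{prop:FromWtoGHclose} is a bookkeeping reformulation of \autoref{FromAlmostVolumeCone} in the language of the volume energy $\mathcal{W}$ and the conicality $\mathcal{N}$.

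First I would fix $\varepsilon>0$ and set $\varepsilon_0\doteq \varepsilon/(2k)$. Applying \autoref{FromAlmostVolumeCone} with the data $K$, $N$, $\eta\doteq 1/k\in(0,1)$, $v$ and $\varepsilon_0$ gives a threshold $\delta_0\doteq\delta(K,N,1/k,v,\varepsilon_0)\in(0,1)$. I would then declare $\delta\doteq \delta_0/k$; since $k>1$ this satisfies $0<\delta<1$ and moreover $\delta\le\delta_0$. Now take $0<r<\delta$ and $x\in X$ with $\mathcal{W}_{kr,r}(x)\le\delta$, and set $R\doteq kr$. Then $r<R<k\delta=\delta_0$ and $r/R=1/k=\eta$, so the radii lie in the admissible range of \autoref{FromAlmostVolumeCone}. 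Exponentiating $\mathcal{W}_{kr,r}(x)\le\delta$ (recall Definition \ref{VolumeEnergy}) and using $e^{-\delta}\ge 1-\delta\ge 1-\delta_0$ gives
\[
\frac{\mathcal{H}^N(B_R(x))}{v_{K,N}(R)}\ \ge\ (1-\delta_0)\,\frac{\mathcal{H}^N(B_r(x))}{v_{K,N}(r)},
\]
so, together with \eqref{VolumeBound}, all hypotheses of \autoref{FromAlmostVolumeCone} are in force. That theorem then produces an $\RCD(N-2,N-1)$ space $Z$ with $\diam Z\le\pi$ such that, denoting by $z$ the tip of the metric cone $C(Z)$,
\[
\dist_{GH}\bigl(\bar B_{R/2}(x),\bar B_{R/2}(z)\bigr)\ \le\ \varepsilon_0 R=\varepsilon_0 k r=\tfrac{\varepsilon}{2}\,r .
\]

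Finally, I would recall that the cone $C(Z)$ over an $\RCD(N-2,N-1)$ space $Z$ with $\diam Z\le\pi$ is an $\RCD(0,N)$ metric cone with tip $z$ (the same class of cones already appearing in \autoref{FromVolumeConeToMetricCone}). Since $R/2=kr/2$, the last display exhibits an $\RCD(0,N)$ metric cone with tip $z$ at Gromov--Hausdorff distance at most $\varepsilon r/2$ from $\bar B_{kr/2}(x)$, which by \autoref{def:Conicality} means exactly that $\mathcal{N}_k(B_r(x))\le\varepsilon$, proving \eqref{ConclusionOnConicality}. I expect no genuine obstacle in this argument: its whole content is the reduction to \autoref{FromAlmostVolumeCone}. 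The only points requiring a little care are keeping straight the double role (upper bound on $R$ and pinching parameter) played by the constant in \autoref{FromAlmostVolumeCone}, and tracking the rescaling $R=kr$ that forces the choice $\varepsilon_0=\varepsilon/(2k)$ and $\delta=\delta_0/k$.
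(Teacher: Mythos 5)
Your proposal is correct and follows essentially the same route as the paper: both reduce the statement to \autoref{FromAlmostVolumeCone} with $\eta=1/k$ and error parameter $\varepsilon/(2k)$, after exponentiating the volume-energy bound (the paper takes $\delta=\min\{\delta'/k,-\log(1-\delta')\}$ where you use $\delta=\delta_0/k$ together with $e^{-\delta}\ge 1-\delta$, which is an equally valid bookkeeping choice). Your write-up is in fact more detailed than the paper's two-line argument, and the radius and parameter matching is carried out correctly.
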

\begin{proof}
	Note that $\mathcal{W}_{kr,r}(x)\leq \delta$ is equivalent to
	\begin{equation}
	\frac{\meas(B_{kr}(x))}{v_{K,N}(kr)} \geq e^{-\delta}\frac{\meas(B_{r}(x))}{v_{K,N}(r)}.
	\end{equation}
	So that we can choose $\delta\doteq \min\left\{\frac{\delta'}{k},-\log(1-\delta')\right\}$ where $\delta'=\delta'(K,N,k,\varepsilon)$ is given by \autoref{FromAlmostVolumeCone} taking $\eta=k^{-1}$ and $\frac{\varepsilon}{2k}$ in place of $\eps$ in that statement. Then \autoref{FromAlmostVolumeCone} gives \eqref{ConclusionOnConicality}.
\end{proof}


\begin{corollary}[Quantitative conicality]\label{cor:AllButFiniteScales2}
	Given $K\in\mathbb{R}$, $N\geq 2$, $k>1$, $0<\gamma<1/2$, $v>0$ and $\varepsilon>0$, there exists a natural number $j_0\doteq j_0(K,N,k,\gamma,v,\varepsilon)$ such that the following holds. If $\left(X,\dist,\mathcal{H}^N\right)$ is a $\ncRCD(K,N)$ space satisfying the volume bound \eqref{VolumeBound}, then for all $x\in X$
	\begin{equation}\label{Thesis2}
	\left|\left\{i \in \mathbb{N} : \mathcal{N}_k\left(B_{\gamma^i}(x)\right) > \varepsilon\right\}\right|\leq j_0,
	\end{equation}
	where $\mathcal{N}$ is defined in \eqref{def:Conicality}.
\end{corollary}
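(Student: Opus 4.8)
The plan is to combine the quantitative differentiation estimate of \autoref{lem:AllButFiniteScales} with the ``small volume energy forces conicality'' statement of \autoref{prop:FromWtoGHclose}. First I would apply \autoref{prop:FromWtoGHclose} with the data $K,N,k,v,\varepsilon$ to obtain a threshold $\delta\doteq\delta(K,N,k,v,\varepsilon)\in(0,1)$ such that, for every $\ncRCD(K,N)$ space satisfying \eqref{VolumeBound}, whenever $0<r<\delta$ and $\mathcal{W}_{kr,r}(x)\le\delta$ one has $\mathcal{N}_k(B_r(x))\le\varepsilon$. Taking the contrapositive: if $0<r<\delta$ and $\mathcal{N}_k(B_r(x))>\varepsilon$, then necessarily $\mathcal{W}_{kr,r}(x)>\delta$.

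Next I would split the index set $\{i\in\mathbb{N}:\mathcal{N}_k(B_{\gamma^i}(x))>\varepsilon\}$ according to whether $\gamma^i\ge\delta$ or $\gamma^i<\delta$. Since $0<\gamma<1/2$, the inequality $\gamma^i\ge\delta$ forces $i\le \log\delta/\log\gamma$, so the first part of the set has cardinality at most $\lceil\log\delta/\log\gamma\rceil+1$, a quantity depending only on $\delta$ and $\gamma$ (hence only on $K,N,k,\gamma,v,\varepsilon$) and independent of $x$ and of the space. For the indices with $\gamma^i<\delta$, the contrapositive form of \autoref{prop:FromWtoGHclose} recalled above gives $\mathcal{W}_{k\gamma^i,\gamma^i}(x)>\delta$; hence these indices are contained in $\{i\in\mathbb{N}:\mathcal{W}_{k\gamma^i,\gamma^i}(x)>\delta\}$, whose cardinality is at most $i_0(k,\gamma,v,\delta)$ by \autoref{lem:AllButFiniteScales} (its hypotheses $k>1$ and $0<\gamma<1/2$ are precisely the ones we are assuming).

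Adding the two bounds yields the claim with $j_0\doteq i_0(k,\gamma,v,\delta)+\lceil\log\delta/\log\gamma\rceil+1$, which depends only on $K,N,k,\gamma,v,\varepsilon$. I do not expect a genuine obstacle here: the statement is a formal consequence of the two preceding results, and the only point needing a little care is the bookkeeping of parameter dependencies together with the harmless constraint $r<\delta$ in \autoref{prop:FromWtoGHclose}, which is automatically met by all but finitely many of the scales $\gamma^i$.
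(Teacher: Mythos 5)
Your proposal is correct and coincides with the paper's own proof: both split the bad scales into those with $\gamma^i\geq\delta$ (bounded by $\lceil\log\delta/\log\gamma\rceil$ up to one unit) and those where the contrapositive of \autoref{prop:FromWtoGHclose} forces $\mathcal{W}_{k\gamma^i,\gamma^i}(x)>\delta$, then invoke \autoref{lem:AllButFiniteScales}. No further comment is needed.
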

\begin{proof}
	Let $0<\delta\doteq \delta(K,N,k,v,\varepsilon)<1$ be given by \autoref{prop:FromWtoGHclose} and $i_0\doteq i_0(k,\gamma,v,\delta)$ given by \autoref{lem:AllButFiniteScales}. Then, according to \autoref{prop:FromWtoGHclose} and \autoref{lem:AllButFiniteScales}, 
	\begin{align*}
	\left|\left\{i \in \mathbb{N} : \mathcal{N}_k\left(B_{\gamma^i}(x)\right) > \varepsilon\right\}\right|&\leq \left|\left\{i \in \mathbb{N} : \gamma^i>\delta\right\}\right| + \left|\left\{i \in \mathbb{N} : \mathcal{W}_{k\gamma^i,\gamma^i}(x)>\delta\right\}\right|\\
	&\leq \left\lceil \frac{\log\delta}{\log\gamma}\right\rceil + i_0,
	\end{align*} 
	so that it is sufficient to choose $j_0\geq \left\lceil \frac{\log\delta}{\log\gamma}\right\rceil + i_0$.
	

\end{proof}

\subsubsection{Construction of the covering and conclusion}\label{sub:proofthm}

From now on we fix $x_0\in X$ and our aim is to construct a good covering of $\mathcal{S}^{k}_{\eta,r}\cap B_{\frac{1}{2}}(x_0)$ in order to give a bound on $\mathcal{H}^N\left(\mathcal{S}^{k}_{\eta,r}\cap B_{\frac{1}{2}}(x_0)\right)$. We recall here the definition of conical sets given in \autoref{def:ConicalSet}.


\begin{definition}
	Following \cite{CheegerNaber13a} we define the \emph{ $\varepsilon-(t,r)$ conical set} in $B_{\frac{1}{2}}(x_0)$ as
	\begin{equation}
	C^{\varepsilon}_{t,r} \doteq \{x\in B_{\frac{1}{2}}(x_0) : \mathcal{N}_t(B_r(x))< \eps\},
	\end{equation}
	where $\mathcal{N}$ is defined in \eqref{def:Conicality}. 
\end{definition}


The following lemma, whose proof is postponed to the next subsection, is a key ingredient for the proof of \autoref{thm:RCDquantitativevolumebound}. 

\begin{lemma}[Covering Lemma]\label{lem:CoveringLemma}
	There exists $c_0(N)>1$, such that given any $\eta>0$ and $0<\gamma <1/2$, there exist $\varepsilon_0\doteq\varepsilon_0(N,K,\gamma,\eta)>0$ and $n_0(N,K,\gamma,\eta)\in \mathbb{N}$ such that the following holds. If for some natural $n_0< j \in\mathbb{N}$ and $k\leq N-1$ we have $x\in \mathcal{S}^k_{\eta,2^{-1}\gamma^{j-1}}\cap B_{\frac{1}{2}}(x_0)$ and  $\mathcal{N}_{\gamma^{-N}}(B_{\gamma^{j-1}}(x)) \leq \varepsilon_0$
	then the minimal number of balls of radius $2^{-1}\gamma^j$ to cover $B_{2^{-1}\gamma^{j-1}}(x)\cap \mathcal{S}^k_{\eta,2^{-1}\gamma^j}\cap C^{\varepsilon_0}_{\gamma^{-N},\gamma^{j-1}}$ is less than $c_0\gamma^{-k}$.
\end{lemma}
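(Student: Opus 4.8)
The plan is to argue by contradiction through a rescaling that turns the scale $\gamma^{j-1}$ into $1$, followed by a pmGH–compactness argument, and then to read off the conclusion from the limit space via the rigid cone splitting \autoref{prop:conesplitting}. Fix once and for all a dimensional constant $c_0=c_0(N)>1$, to be pinned down only at the very end: it will be one more than the number of balls of radius $2^{-1}\gamma$ needed to cover an $l$-dimensional ball of radius $1/2$, uniformly over $l\le N$, which is an explicit quantity of the form $C^N$ that does not depend on $\gamma,\eta,K$. Assume the statement fails for this $c_0$: then there are $\eta>0$, $0<\gamma<1/2$ (and $K$, $N$) for which, taking $\varepsilon_0=1/i$ and $n_0=i$, we find $\ncRCD(K,N)$ spaces $(X_i,\dist_i,\mathcal{H}^N)$ satisfying \eqref{VolumeBound}, basepoints $x_{0,i}$, indices $j_i>i$, and points $x_i\in\mathcal{S}^k_{\eta,2^{-1}\gamma^{j_i-1}}\cap B_{1/2}(x_{0,i})$ with $\mathcal{N}_{\gamma^{-N}}(B_{\gamma^{j_i-1}}(x_i))\le 1/i$, such that $E_i\doteq B_{2^{-1}\gamma^{j_i-1}}(x_i)\cap\mathcal{S}^k_{\eta,2^{-1}\gamma^{j_i}}\cap C^{1/i}_{\gamma^{-N},\gamma^{j_i-1}}$ cannot be covered by fewer than $c_0\gamma^{-k}$ balls of radius $2^{-1}\gamma^{j_i}$. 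Consider the rescalings $(X_i,\gamma^{-(j_i-1)}\dist_i,\mathcal{H}^N,x_i)$, which are $\ncRCD(K\gamma^{2(j_i-1)},N)$; since $j_i\to\infty$ the curvature lower bound tends to $0$, and the Bishop–Gromov inequality together with \eqref{VolumeBound}, $\vartheta_N\le 1$ (\autoref{DensityLessThanOne}) and \eqref{LimitOfVKN} gives a two-sided bound on the volume of the rescaled unit balls centred at $x_i$; hence by \autoref{thm:volumeconvergence} we may pass to a pmGH-subsequential limit $(X_\infty,\dist_\infty,\mathcal{H}^N,x_\infty)$, an $\ncRCD(0,N)$ m.m.s.. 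In the rescaled coordinates the hypotheses become $x_i\in\mathcal{S}^k_{\eta,1/2}$ and $\mathcal{N}_{\gamma^{-N}}(B_1(x_i))\to 0$, and $E_i$ becomes $S_i\doteq B_{1/2}(x_i)\cap\mathcal{S}^k_{\eta,2^{-1}\gamma}\cap C^{1/i}_{\gamma^{-N},1}$, to be covered by balls of radius $2^{-1}\gamma$.

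The next step is to describe the limit configuration. Since a ball around the vertex of a metric cone is self-similar under rescaling, a property stable under pmGH-convergence, from $\mathcal{N}_{\gamma^{-N}}(B_1(x_i))\to 0$ it follows that $\bar B_{\gamma^{-N}/2}(x_\infty)$ is isometric to a ball of the same radius around the vertex of a metric cone; as $X_\infty$ is $\RCD(0,N)$, by the structure of metric cones appearing in $\ncRCD(0,N)$ spaces (cf.\ \autoref{thm:tangentconesaremetricocnes}) this cone is isomorphic to $\setR^l\times C(W)$ for some $\RCD(N-l-2,N-l-1)$ m.m.s.\ $(W,\dist_W,\meas_W)$, and we choose $l$ maximal with this property. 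On the other hand, for $i$ large the scale $s=1$ belongs to the range $[1/2,\gamma^{-(j_i-1)}]$ at which the membership $x_i\in\mathcal{S}^k_{\eta,1/2}$ is tested, so lower semicontinuity of $\dist_{GH}$ yields $\dist_{GH}(B_1(x_\infty),B_1((0,z^*)))\ge\eta$ for every $\setR^{k+1}\times C(Z)$. But $\setR^l\times C(W)$ is again of the form $\setR^{k+1}\times C(Z)$ as soon as $l\ge k+1$, since a Euclidean factor times a cone is again a cone (over a spherical join of the sections); hence the above forces $l\le k$. Thus the ``spine'' $\setR^l\times\{w^*\}\cap\bar B_{1/2}(x_\infty)$ is an $l$-dimensional ball with $l\le k$.

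Now the set to be covered is localized onto the spine via cone splitting. Let $y_\infty$ be any pmGH-limit of points $y_i\in S_i$. Then $y_\infty\in\bar B_{1/2}(x_\infty)$ and, since $y_i\in C^{1/i}_{\gamma^{-N},1}$, the same self-similarity argument shows that $\bar B_{\gamma^{-N}/2}(y_\infty)$ is isometric to a ball around its own vertex $y_\infty$ of a metric cone. Suppose $y_\infty\notin\setR^l\times\{w^*\}$; since $\gamma<1/2$ we have $\gamma^{-N}/2-1/2>1/2\ge\dist_\infty(y_\infty,x_\infty)$, and under the identification $\bar B_{\gamma^{-N}/2}(x_\infty)\cong\setR^l\times C(W)$ (sending $x_\infty$ to the vertex) the ball $\bar B_{\gamma^{-N}/2-1/2}(y_\infty)$ is simultaneously a ball of $\setR^l\times C(W)$ centred at a point $\neq$ vertex and a ball around the vertex $y_\infty$ of the cone structure at $y_\infty$. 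Hence the localized version of the rigid cone splitting (\autoref{prop:conesplitting} together with \autoref{rm:conesplitweaker}) applies and gives $\setR^l\times C(W)\cong\setR^{l+1}\times C(\tilde Z)$, contradicting the maximality of $l$. Therefore every such $y_\infty$ lies in $\setR^l\times\{w^*\}$.

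Finally this is turned into the covering estimate. Since every pmGH-limit of points of $\bigcup_i S_i$ lies in $\setR^l\times\{w^*\}\cap\bar B_{1/2}(x_\infty)$, for $i$ large $S_i$ is contained in the $(\gamma/8)$-neighbourhood of the pmGH-image of that $l$-dimensional ball (otherwise a limit point off the spine would be extracted). The $l$-dimensional ball of radius $1/2$ is covered by at most $C^N\gamma^{-l}$ balls of radius $\gamma/4$; enlarging those radii to $2^{-1}\gamma$ absorbs both the thickening and the pmGH-error for $i$ large, so $S_i$ — hence, rescaling back, $E_i$ — is covered by at most $C^N\gamma^{-l}\le C^N\gamma^{-k}<c_0\gamma^{-k}$ balls of radius $2^{-1}\gamma$ (respectively $2^{-1}\gamma^{j_i}$), contradicting the choice of $x_i$. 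I expect the two delicate points to be: (i) ensuring the conicality scale $\gamma^{-N}$ genuinely dominates the scales $1$ and $1/2$ at which the stratum and the spine live, so that the two cone structures overlap on a ball of radius strictly larger than $\dist_\infty(y_\infty,x_\infty)$ and \autoref{rm:conesplitweaker} can be invoked; and (ii) the bookkeeping between ``balls around vertices of cones'' and genuine metric cones when passing to pmGH-limits, when applying the structure theory of $\RCD(0,N)$ cones, and when exploiting maximality of $l$.
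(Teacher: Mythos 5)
Your argument is correct in substance, but it takes a genuinely different route from the paper's. The paper deduces the Covering Lemma from the quantitative cone splitting \autoref{thm:conesplittingquant} through \autoref{cor:almostconicalnear}: there one iterates the almost splitting over the dimension $l$ of the Euclidean factor, with nested parameters $\varepsilon^{[l]},\theta^{[l]}$ and decreasing scales $2^{-1}\gamma^{-(N-l)}r$ chosen so that each application, which costs a factor $\gamma$ in scale, still lands above the scale at which maximality of $l$ is tested; the output is the tubular neighbourhood inclusion \eqref{eq:usefulinclusion} at the finite scale, from which the ball count is immediate. You instead run a single contradiction--compactness argument at the level of the Covering Lemma itself, rescale $\gamma^{j_i-1}\mapsto 1$, pass to an $\ncRCD(0,N)$ limit, and invoke the rigid splitting (\autoref{prop:conesplitting}, \autoref{rm:conesplitweaker}) in the limit; this is the same mechanism the paper uses to prove \autoref{thm:conesplittingquant}, applied once at the top level rather than once per splitting level. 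Your version is more economical (it bypasses \autoref{thm:conesplittingquant} and \autoref{cor:almostconicalnear} entirely), at the price that your $\varepsilon_0,n_0$ acquire a dependence on the non-collapsing constant $v$ of \eqref{VolumeBound} (absent from the Lemma as stated, though harmless since $v$ enters the final constant of \autoref{thm:RCDquantitativevolumebound} anyway), and that the intermediate statements the paper reuses elsewhere are not produced. The one step you must spell out is the contradiction with the maximality of $l$: \autoref{rm:conesplitweaker} assumes a globally conical ambient space, while you only know that $\bar B_{\gamma^{-N}/2}(x_\infty)$ is a cone ball. The repair is the one you implicitly rely on: a ball around the vertex determines the full cone $\setR^l\times C(W)$, the local cone structure at $y_\infty$ transfers into that full cone because $\gamma^{-N}/2-1/2>\dist_\infty(x_\infty,y_\infty)$ (which you check), and \autoref{rm:conesplitweaker} applied with ambient space $\setR^l\times C(W)$ then splits the \emph{whole} cone --- hence the ball $\bar B_{\gamma^{-N}/2}(x_\infty)$, after relocating the vertex of the new splitting to the image of $x_\infty$, a point worth a line. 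Finally, pass to a subsequence along which $k=k_i$ is constant before extracting the limit.
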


\begin{proof}[Proof of \autoref{thm:RCDquantitativevolumebound}]
We can reduce ourselves to prove the sought estimate with $r=2^{-1}\gamma^j$ for every $j\in\mathbb{N}$, for a fixed $0<\gamma(K,N,\eta)<1/2$ which will be chosen later.
Indeed, suppose that there exist $0<\gamma(K,N,\eta)<1/2$ and $c(K,N,v,\eta)$ such that, for every $j\in\mathbb{N}$,
\begin{equation}\label{ReductionToGamma}
\mathcal{H}^N\left(\mathcal{S}^k_{\eta,2^{-1}\gamma^j}\cap B_{\frac{1}{2}}(x_0)\right) \leq c(2^{-1}\gamma^j)^{(N-k-\eta)}.
\end{equation}
Then, given $0<r<1/2$, we can find $j$ such that $2^{-1}\gamma^{j+1} < r \leq 2^{-1}\gamma^j$. Since $s\to\mathcal{S}^k_{\eta,s}$ is increasing, we easily obtain 
\begin{equation*}
\mathcal{H}^N\left(\mathcal{S}^k_{\eta,r}\cap B_{\frac{1}{2}}(x_0)\right) \leq \bar{c}r^{N-k-\eta}
\end{equation*}
with $\bar c(K,N,v,\eta)\doteq c(K,N,v,\eta)\gamma(K,N,\eta)^{-(N-k-\eta)}$.

Let us prove \eqref{ReductionToGamma}.
From now on we will denote any $j$-uple with entries in $\{0,1\}$ with $T^j$ and the $i$-th entry of this $j$-uple with $T^j_i$. Also $|T^j|$ will indicate the number of $1$'s in this $j$-uple.
Let us fix $j\in\mathbb{N}$. To each $x\in B_{\frac{1}{2}}(x_0)$ we can associate $T^j(x)$ a $j$-uple with entries in $\{0,1\}$ as follows: for $i\leq j$
\begin{equation}\label{DefinitionJuple}
T_i^j(x) = 0 \Leftrightarrow x \in C^{\varepsilon}_{\gamma^{-N},\gamma^i}.
\end{equation}
For any $j$-uple $T^j$ with entries in $\{0,1\}$ we let
\begin{equation}
E_{T^j}\doteq \{x\in B_{\frac{1}{2}}(x_0) : T^j(x)=T^j\}.
\end{equation}
An immediate consequence of \autoref{cor:AllButFiniteScales2} is that if $E_{T^j}$ is not empty for some $j$-uple $T^j$, then 
\begin{equation}\label{BoundOnTj}
|T^j|\leq j_0\doteq j_0(K,N,\gamma,v,\varepsilon).
\end{equation}
Indeed, if $E_{T^j}$ is not empty, then there exists $x\in B_{\frac{1}{2}}(x_0)$ such that $T^j(x)=T^j$. Recalling that a $j$-uple defined starting from a point according to \eqref{DefinitionJuple} has a 1 in the $i$-th entry if and only if $\mathcal{N}_{\gamma^{-N}}\left(B_{\gamma^i}(x)\right)\geq \varepsilon$, the estimates of \autoref{cor:AllButFiniteScales2} applied with $k=\gamma^{-N}$, gives the sought result.\\
The bound obtained in \eqref{BoundOnTj} allows to estimate the number of non empty sets $E_{T^j}$ by $2j^{j_0}$. Indeed, the number of possible choices of $j_0$ positions in a string with $j\geq j_0$ entries is 
\begin{equation*}
\binom{j}{j_0}\le 2j^{j_0},
\end{equation*}
and the estimate holds also in the case $j<j_0$ since in that case the $j$-uples are at most $2^j$ which is less than the right hand side in the previous equation since $j<j_0$.

Let us define now inductively on $j$ the covering of $\mathcal{S}^k_{\eta,2^{-1}\gamma^j}\cap B_{\frac{1}{2}}(x_0)$ in such a way that
\begin{equation}
\mathcal{S}^k_{\eta,2^{-1}\gamma^j}\cap B_{\frac{1}{2}}(x_0) \subseteq\bigcup_{T^j:E_{T^j}\neq\emptyset} \mathcal{B}^{T^j},
\end{equation}
where $\mathcal{B}^{T^j}$ is a union of balls of radius $2^{-1}\gamma^j$.\\ 
For $j=1$ we let $\mathcal{B}^{(0)}$ be the union of the minimum amount of balls of radius $2^{-1}\gamma$ with centers in $\mathcal{S}^k_{\eta,2^{-1}\gamma}\cap E_{(0)}$ needed to cover $\mathcal{S}^k_{\eta,2^{-1}\gamma}\cap E_{(0)}$, if this intersection is not empty. Then we let $\mathcal{B}^{(1)}$ be the union of the minimum amount of balls of radius $2^{-1}\gamma$ with centers in $\mathcal{S}^k_{\eta,2^{-1}\gamma}\cap E_{(1)}$ which we need to cover $\mathcal{S}^k_{\eta,2^{-1}\gamma}\cap E_{(1)}$, if this intersection is not empty.\\
Now for any $j>1$ and for any $T^j$ for which $E_{T^j}$ is not empty, we want to define $\mathcal{B}^{T^j}$. Let us consider the $(j-1)$-uple $T^{j-1}$ which we obtain by dropping the last entry in $T^j$. For each ball $B_{2^{-1}\gamma^{j-1}}(\bar x)$ in $\mathcal{B}^{T^{j-1}}$, we take the minimum amount of balls of radius $2^{-1}\gamma^j$ with centers in $\mathcal{S}^k_{\eta,2^{-1}\gamma^{j}}\cap E_{T^j}\cap B_{2^{-1}\gamma^{j-1}}(\bar x)$ needed to cover $\mathcal{S}^k_{\eta,2^{-1}\gamma^{j}}\cap E_{T^j}\cap B_{2^{-1}\gamma^{j-1}}(\bar x)$, if this intersection is not empty.

The next step in order to achieve the volume estimate \eqref{eq:volumebound} aims to bound the cardinality of the families $\mathcal{B}^{T^j}$. We claim that for any such family, setting $Q\doteq n_0+j_0$, the number of balls needed can be controlled by
\begin{equation}\label{eq:boundingballs}
\left( c_1\gamma^{-N}\right)^Q\cdot\left(c_0\gamma^{-k}\right)^{j-Q},
\end{equation}
for some constants $c_1(N,K)\ge c_0(N)>1$. To this aim we just observe that \eqref{eq:boundingballs} follows from the way in which we constructed the covering, after the appropriate choice of $\varepsilon_0$ forced by \autoref{lem:CoveringLemma}, by means of an induction argument. Indeed the factor with exponent $Q$ in \eqref{eq:boundingballs} arises from the at most $j_0+n_0$ scales on which the assumptions of \autoref{lem:CoveringLemma} are not satisfied and therefore we are forced to cover with $c_1\gamma^{-N}$ balls (this possibility is granted by \eqref{prop:BishopGromovInequality}). The factor with exponent $j-Q$ instead arises from the remaining scales on which \autoref{lem:CoveringLemma} applies and we can cover with less than $c_0\gamma^{-k}$ balls.

Recapitulating what we obtained so far, we proved that there exist constants $c_1(K,N)\ge c_0(N)>1$ and a natural number $j_0$ such that, for any natural $j$, the set $\mathcal{S}^{k}_{\eta,2^{-1}\gamma^j}\cap B_{\frac{1}{2}}(x_0)$ is contained in the union of at most $2j^{j_0}$ non empty families of balls. Furthermore, each of the families above contains at most $(c_1\gamma^{-N})^Q(c_0\gamma^{-k})^{j-Q}$ balls of radius $2^{-1}\gamma^j$.\\
Let us see how \eqref{eq:volumebound} can be obtained starting from these results. First we let $\gamma=\gamma(\eta)\doteq c_0^{-\frac{2}{\eta}}$, where $c_0$ is given by \autoref{lem:CoveringLemma}. Then we observe that $c_0^{j}=\left(\gamma^j\right)^{-\frac{\eta}{2}}$, $j^{j_0}\le c(N,K,v,\eta)(\gamma^j)^{-\frac{\eta}{2}}$ and up to choose $\eta$ small enough $0<\gamma <1/2$. The considerations above, together with the volume comparison yielding $\mathcal{H}^N(B_{2^{-1}\gamma^j}(x))\le c_2(N,K)(2^{-1}\gamma^j)^{N}$, give the estimate
\begin{align*}
\mathcal{H}^N\left(\mathcal{S}^{k}_{\eta,2^{-1}\gamma^j}\cap B_{\frac{1}{2}}(x_0)\right)\le& 2j^{j_0}\left[(c_1\gamma^{-N})^Q\cdot(c_0\gamma^{-k})^{j-Q} \right]\cdot c_2\cdot(2^{-1}\gamma^j)^N\\
\le& c_3(N,K,v,\eta)\cdot j^{j_0}\cdot c_0^{j}\cdot(\gamma^j)^{N-k}\\
\le& c_4(N,K,v,\eta)\cdot(\gamma^{j})^{N-k-\eta}.
\end{align*}
In view of what we observed at the beginning of the proof, the estimate above gives the desired result when $\eta$ is small enough, this in turn implies the general case thanks to \eqref{eq:inclusion singular set}.
\end{proof}

\subsubsection{Proof of the covering lemma via cone splitting}

Aim of this subsection is to prove \autoref{lem:CoveringLemma}. The key tool in proving it will be the effective almost cone splitting theorem proved in \autoref{subsec:almostcone splitting} that we restate here for the reader convenience.

\begin{theorem}[Cone splitting, quantitative version]
For all $K\in \mathbb{R}$, $N\in [2,+\infty)$, $0<\gamma<1$, $\delta<\gamma^{-1}$, and for all $\tau,\psi>0$ there exist $0<\varepsilon(N,K,\gamma,\delta,\tau,\psi)<\psi$ and $0<\theta=\theta(N,K,\gamma,\delta,\tau,\psi)$ such that the following holds. Let $(X,\dist,\meas)$ be an $\RCD(K,N)$ m.m.s., $x\in X$ and $r\le\theta$ be such that there exists an $\varepsilon r$-GH equivalence
\begin{equation*}
F:B_{\gamma^{-1}r}\left((0,z^*)\right)\to B_{\gamma^{-1}r}(x)
\end{equation*}
for some cone $\setR^l\times C(Z)$, with $(Z,\dist_Z,\meas_Z)$ $\RCD(N-l-2,N-l-1)$ m.m.s..
If there exists 
\begin{equation*}
x'\in B_{\delta r}(x)\cap C^{\varepsilon}_{\gamma^{-N},\delta r}
\end{equation*}
with 
\begin{equation*}
x'\notin T_{\tau r}\left(F(\setR^l\times\left\lbrace z^*\right\rbrace )\right)\cap B_r(x),
\end{equation*}
then for some cone $\setR^{l+1}\times C(\tilde{Z})$, where  $(\tilde{Z},\dist_{\tilde{Z}},\meas_{\tilde{Z}})$ is a $\RCD(N-l-2,N-l,1)$ m.m.s.,
\begin{equation*}
\dist_{GH}\left(B_r(x), B_r((0,\tilde{z}^*))\right)<\psi r.
\end{equation*}  
\end{theorem}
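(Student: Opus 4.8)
The plan is to argue by contradiction, reducing this effective statement to the rigid cone splitting \autoref{prop:conesplitting} through a rescaling and a pmGH-compactness argument; this is the standard mechanism that upgrades an infinitesimal rigidity to an effective one. Negating the conclusion, for the fixed data $K,N,\gamma,\delta,\tau,\psi$ I would obtain sequences $\varepsilon_n\downarrow0$, radii $r_n\downarrow0$ (driven to zero by letting the threshold $\theta$ in the negated statement tend to $0$), $\RCD(K,N)$ spaces $(X_n,\dist_n,\meas_n)$, points $x_n$, $\varepsilon_n r_n$-GH equivalences $F_n\colon B_{\gamma^{-1}r_n}((0,z_n^*))\to B_{\gamma^{-1}r_n}(x_n)$ from model cones $\setR^l\times C(Z_n)$ (with $Z_n$ an $\RCD(N-l-2,N-l-1)$ space, $\diam Z_n\le\pi$), and off-spine conical points $x_n'\in B_{\delta r_n}(x_n)\cap C^{\varepsilon_n}_{\gamma^{-N},\delta r_n}$, for all of which $\dist_{GH}(B_{r_n}(x_n),B_{r_n}((0,\tilde z^*)))\ge\psi r_n$ for every admissible $\setR^{l+1}\times C(\tilde Z)$.

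First I would rescale the $n$-th space by $r_n^{-1}$, so that the distinguished ball has radius $\gamma^{-1}$ and the conical scale and tube radius become $\delta$ and $\tau$. Since $r_n\to0$, the rescaled lower curvature bound $Kr_n^2$ tends to $0$, hence the rescaled spaces are $\RCD(-\alpha_n,N)$ with $\alpha_n\downarrow0$; this is precisely the role of the threshold $\theta$. By the local uniform doubling \eqref{eq:locdoubling} and Gromov precompactness I would then extract a pmGH limit and use stability of the $\RCD$ condition (\autoref{remark:stability}) to obtain an $\RCD(0,N)$ limit $(X,\dist,\meas)$ with marked point $x$. Independently, the model cones are precompact: the cross sections $Z_n$ are $\RCD(N-l-2,N-l-1)$ with $\diam Z_n\le\pi$, hence uniformly compact, so up to a subsequence $Z_n\to Z$ and $\setR^l\times C(Z_n)\to\setR^l\times C(Z)$, with $Z$ again $\RCD(N-l-2,N-l-1)$.

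Next I would pass the three structural conditions to the limit. Because $\varepsilon_n\to0$, the $\varepsilon_n$-GH equivalences $F_n$ converge to a genuine isometry $F\colon B_{\gamma^{-1}}((0,z^*))\to B_{\gamma^{-1}}(x)$, so near $x$ the space is exactly the cone $\setR^l\times C(Z)$. The points $x_n'$ converge to some $x'\in\bar B_\delta(x)$, and the conicality hypothesis $\mathcal{N}_{\gamma^{-N}}(B_\delta(x_n'))<\varepsilon_n\to0$ passes to the limit---conicality being defined as a Gromov--Hausdorff infimum and lower semicontinuous under pmGH convergence---so that $\mathcal{N}_{\gamma^{-N}}(B_\delta(x'))=0$; that is, the ball $\bar B_{2^{-1}\delta\gamma^{-N}}(x')$ is isometric to a ball centred at the tip of a metric cone. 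Finally the off-spine condition survives as $\dist(x',\setR^l\times\{z^*\})\ge\tau>0$, so $x'\notin\setR^l\times\{z^*\}$.

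At this point I would invoke the localized rigid cone splitting, namely \autoref{prop:conesplitting} in the stronger local form recorded in \autoref{rm:conesplitweaker}, applied to the two distinct cone vertices $(0,z^*)$ and $x'$ of the limit $\RCD(0,N)$ space. The decisive numerical point, flagged in the footnote, is that the radius $2^{-1}\delta\gamma^{-N}$ on which $x'$ carries an exact cone structure strictly exceeds $\delta\ge\dist(x',(0,z^*))$ (indeed $\gamma^{-N}\ge\gamma^{-2}>2$), which is exactly the hypothesis demanded by the localized version. The rigidity then forces an extra Euclidean factor: $X$ is locally isometric to $\setR^{l+1}\times C(\tilde Z)$ for some $\RCD(N-l-3,N-l-2)$ space $\tilde Z$, whence $\dist_{GH}(B_1(x),B_1((0,\tilde z^*)))=0$, contradicting the surviving bound $\ge\psi$. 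The main obstacle is the careful bookkeeping of the limiting objects---checking that curvature and dimension parameters of the cross sections pass correctly through the limit, that the purely metric conicality is genuinely lower semicontinuous so that the exact cone structure at $x'$ is recovered, and above all verifying the scale inequality $2^{-1}\delta\gamma^{-N}>\delta$ that makes the weaker, localized cone splitting applicable to a ball around $x'$ large enough to reach the other vertex $(0,z^*)$.
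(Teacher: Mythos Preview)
Your proposal is correct and follows essentially the same route as the paper: a contradiction argument via rescaling and pmGH-compactness that reduces the quantitative statement to the rigid cone splitting \autoref{prop:conesplitting} in its localized form \autoref{rm:conesplitweaker}, with the key numerical observation $2^{-1}\delta\gamma^{-N}>\delta$ (the very point flagged in the footnote) ensuring applicability of the local rigidity. Your write-up is in fact somewhat more explicit than the paper's about the precompactness of the cross sections $Z_n$ and the lower semicontinuity of $\mathcal{N}$, but the skeleton is identical.
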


\begin{corollary}\label{cor:almostconicalnear}
For all $K\in \mathbb R$, $N\in[2,+\infty)$, $k\leq N-1$, $0<\gamma<1/2$, $\eta>0$ and for all $\tau,\psi>0$ there exist $\eps(K,N,\gamma,\eta,\tau,\psi)>0$ and $\theta(K,N,\gamma,\eta,\tau,\psi)>0$ such that, for any $\RCD(K,N)$ m.m.s. $(X,\dist,\meas)$, the following holds. Let $r\le\theta$ and $x\in C^{\eps}_{\gamma^{-N},r}\cap \mathcal{S}^k_{\eta, 2^{-1} r}$. Then there exist a cone $\setR^l\times C(\tilde{Z})$ with $l\leq k$, an $\RCD(N-l-2,N-l-1)$ m.m.s. $\tilde{Z}$ and a $(\frac{\psi}{2} r)$-GH equivalence
\begin{equation*}
F:B_{r/2}((0,\tilde{z}^*))\to B_{r/2}(x)
\end{equation*}
such that
\begin{equation}\label{eq:usefulinclusion}
C^{\eps}_{\gamma^{-N}, r}\cap B_{r/2}(x)\subset T_{2^{-1}\tau r}\left(F(\setR^l\times\{\tilde{z}^*\})\right)
\end{equation}
\end{corollary}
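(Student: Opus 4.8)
The plan is to prove Corollary~\ref{cor:almostconicalnear} by iterating the quantitative cone splitting Theorem~\ref{thm:conesplittingquant} at most $k$ times, starting from the trivial splitting with $l=0$ and adding one Euclidean factor at each step until the maximality condition forced by $x\in\mathcal{S}^k_{\eta,2^{-1}r}$ stops the process. The point $x$ lies in $C^{\eps}_{\gamma^{-N},r}$, which by Definition~\ref{def:ConicalSet} means $\mathcal{N}_{\gamma^{-N}}(B_r(x))<\eps$; unwinding Definition~\ref{def:Conicality}, this gives a metric cone $Z_0$ (which is $\RCD(0,N)$, and a fortiori we may regard $C(Z_0)=\setR^0\times C(Z_0)$) together with a GH-closeness estimate between $\bar B_{\gamma^{-N}r/2}(x)$ and the corresponding ball around the tip. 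This furnishes the base case: a GH equivalence $F_0:B_{\gamma^{-1}r}((0,z_0^*))\to B_{\gamma^{-1}r}(x)$ for the cone $\setR^0\times C(Z_0)$, with error controlled by $\eps$ (note $\gamma^{-1}<\gamma^{-N}$ since $N\ge 2$ and $0<\gamma<1/2$, so the ball we need is contained in the one we control).

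\textbf{The inductive step} runs as follows. Suppose after $l$ steps we have a cone $\setR^l\times C(Z_l)$ with $Z_l$ an $\RCD(N-l-2,N-l-1)$ space and an $(\eps r)$-GH equivalence $F_l:B_{\gamma^{-1}r}((0,z_l^*))\to B_{\gamma^{-1}r}(x)$. Apply Theorem~\ref{thm:conesplittingquant} with the parameters $\gamma$, a choice of $\delta<\gamma^{-1}$ (we may take $\delta$ of size comparable to $1$, but small enough that $B_{\delta r}(x)\subset B_{r/2}(x)$ and that $B_{\delta r}(x)\cap C^{\eps}_{\gamma^{-N},\delta r}$ is the relevant set), and the given $\tau,\psi$. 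If there exists $x'\in B_{\delta r}(x)\cap C^{\eps}_{\gamma^{-N},\delta r}$ with $x'\notin T_{\tau r}(F_l(\setR^l\times\{z_l^*\}))\cap B_r(x)$, then the theorem produces a cone $\setR^{l+1}\times C(Z_{l+1})$ with $\dist_{GH}(B_r(x),B_r((0,z_{l+1}^*)))<\psi r$. From this GH-closeness at scale $r$ we re-extract, by the argument inside Definition~\ref{def:Conicality} applied with the improved symmetry (or equivalently by rescaling and re-running the cone-detection), a GH equivalence at scale $\gamma^{-1}r'$ for a slightly smaller $r'$, feeding the next iteration; here one has to be slightly careful and may need to shrink $r$ by a controlled factor or adjust $\theta$ at each of the (at most $k$) steps, absorbing the loss into the final choice of $\eps$ and $\theta$ as minima over the finitely many steps. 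The process must terminate at some $l\le k$: if it reached $l=k+1$ we would have a tangent-type conical structure splitting $\setR^{k+1}$ GH-close to $B_r(x)$, which — after passing to a tangent cone at $x$ — would contradict $x\in\mathcal{S}^k_{\eta,2^{-1}r}$ once $\psi$ is chosen smaller than $\eta/2$ (since $\dist_{GH}(B_{r/2}(x),B_{r/2}((0,z^*)))\le \dist_{GH}(B_r(x),B_r((0,z^*)))\cdot 2 <\psi r < \tfrac{\eta}{2}r$ would violate the defining inequality of $\mathcal{S}^k_{\eta,2^{-1}r}$ at scale $s=r/2$). When the process stops at level $l$, no such $x'$ exists, which is exactly the statement that $C^{\eps}_{\gamma^{-N},r}\cap B_{r/2}(x)\subset T_{2^{-1}\tau r}(F(\setR^l\times\{\tilde z^*\}))$ — modulo matching the tubular radius $\tau r$ versus $2^{-1}\tau r$ and the ball radii $r$ versus $r/2$, which is handled by choosing the input parameter in Theorem~\ref{thm:conesplittingquant} to be $\tau/2$ and restricting to $B_{r/2}$.

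\textbf{The main obstacle} I anticipate is the bookkeeping of scales and parameters across the iteration: each application of Theorem~\ref{thm:conesplittingquant} outputs a cone approximation on a ball of a \emph{fixed} radius $r$ but to iterate we need a GH equivalence on the \emph{larger} ball $B_{\gamma^{-1}r}$, so a naive iteration loses a factor $\gamma$ in the radius at every step. Since the number of steps is at most $k\le N-1$, this loss is bounded by $\gamma^{k}\ge \gamma^{N-1}$, a fixed constant, so one can either (i) work throughout with a fixed "master scale" and absorb all losses into a single small $\theta$ and small $\eps$ chosen at the end as the minimum over the $\le k$ stages, or (ii) rephrase each stage on a rescaled space (using that $\RCD(K,N)$ rescales to $\RCD(K\lambda^2,N)$, as exploited in the proof of Theorem~\ref{FromAlmostVolumeCone}) so that the curvature degeneration is also controlled. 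A secondary subtlety is justifying the base-case extraction of a \emph{GH equivalence} (Definition~\ref{def:GHequivalence}) from the conicality bound (Definition~\ref{def:Conicality}), which only directly gives a GH \emph{distance} bound; but, as remarked after Definition~\ref{def:ConicalSet} in the paper, these two notions agree up to a harmless multiplicative constant, so this costs only a redefinition of $\eps$. Once these are organized, the statement with $l\le k$, the $(\tfrac{\psi}{2}r)$-GH equivalence, and the inclusion \eqref{eq:usefulinclusion} follow directly from the terminal stage of the iteration.
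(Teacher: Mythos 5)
Your proposal is correct and follows essentially the same route as the paper: the paper phrases the iteration as ``take the largest $l$ for which a $\gamma$-rescaled cone approximation with error $\varepsilon^{[l]}$ exists'' (with the thresholds $\varepsilon^{[0]}<\dots<\varepsilon^{[N]}$ and scales $2^{-1}\gamma^{-(N-l)}r$ defined by backward induction through \autoref{thm:conesplittingquant}), which is exactly the terminal state of your forward iteration, and it resolves the scale bookkeeping precisely as you anticipate in option (i), losing one factor of $\gamma$ per step over at most $N$ steps. The only cosmetic blemish is your aside about ``passing to a tangent cone'' to rule out $l>k$ — no tangent cone is needed, since a sufficiently good GH equivalence at the single scale $s=r/2$ already violates the defining inequality of $\mathcal{S}^k_{\eta,2^{-1}r}$, which is how the paper argues.
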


\begin{proof}
Let $\varepsilon^{[N]}\doteq\varepsilon(N,K,\gamma,1,\tau,\psi)$ and $\theta^{[N]}\doteq\theta(N,K,\gamma,1,\tau,\psi)$ be given by \autoref{thm:conesplittingquant} and inductively define, still by \autoref{thm:conesplittingquant}, for all $0\leq l \leq N-1$, $$
\varepsilon^{[l]}\doteq\varepsilon(N,K,\gamma,2\gamma^{N-l-1},\tau\gamma^{N-l-1},\gamma^{N-l-1}\varepsilon^{[l+1]})\footnote{Here we use $1/2>\gamma$ in order to obtain $2\gamma^{N-l-1}<\gamma^{-1}$ for all $0\leq l\leq N-1$ and apply \autoref{thm:conesplittingquant}.},$$
$$ 
 \theta^{[l]}\doteq\theta(N,K,\gamma,2 \gamma^{N-l-1},\tau\gamma^{N-l-1},\gamma^{N-l-1}\varepsilon^{[l+1]}).
$$
Observe that $\varepsilon^{[0]}<\varepsilon^{[1]}<\cdots<\varepsilon^{[N]}<\psi$ and put $\eps\doteq\varepsilon^{[0]}$. Choose $\theta=\min_{l}\left\{\frac{2\theta^{[l]}}{\gamma^{-(N-l-1)}}\right\}$. \\
By assumption $x\in C^{\eps}_{\gamma^{-N},r}$, hence we can find the largest $0\le l\le N$ such that for some cone $\setR^{l}\times C(\tilde{Z})$, with $\tilde{Z}$ an $\RCD(N-l-2,N-l-1)$ m.m.s., there is an $(2^{-1}\varepsilon^{[l]}r)$-GH equivalence $F:B_{2^{-1}\gamma^{-(N-l)}r}((0,\tilde{z}^*))\to B_{2^{-1}\gamma^{-(N-l)}r}(x)$. Note that we can assume $l\leq k\le N-1$. Indeed,
 since it is not restrictive to take $\varepsilon^{[l]}<\eta$, we then have a $(2^{-1}\eta r)$-GH equivalence between $B_{r/2}((0,\tilde{z}^*))$ and $B_{r/2}(x)$, which is impossible if $l>k$ since $x\in\mathcal{S}^k_{\eta, 2^{-1}r}$. Applying \autoref{thm:conesplittingquant} with $r'=2^{-1}\gamma^{-(N-l-1)}r$, $\delta'=2\gamma^{N-l-1}$, $\tau'=\tau \gamma^{N-l-1}$ and $\psi'=\gamma^{N-l-1}\eps^{[l+1]}$ and considering $l\leq k\leq N-1$\footnote{This is important to see that an $2^{-1}\varepsilon^{[l]} r$-GH equivalence is a $2^{-1}\varepsilon^{[l]}\gamma^{-(N-l-1)}r$-GH equivalence when we apply \autoref{thm:conesplittingquant}.}, we obtain
$$
B_{ r}(x)\cap C^{\varepsilon^{[l]}}_{\gamma^{-N}, r} \subseteq T_{2^{-1}\tau r}\left(F(\mathbb R^l\times\{\tilde{z}^*\})\right).
$$
Now the conclusion comes from the straightforward inclusion 
$$
B_{r/2}(x)\cap C^{\eps}_{\gamma^{-N}, r} \subseteq B_{r}(x)\cap C^{\varepsilon^{[l]}}_{\gamma^{-N},r}.
$$
\end{proof}

Finally we can pass to the proof of \autoref{lem:CoveringLemma}. 

\begin{proof}[Proof of \autoref{lem:CoveringLemma}]

Let us choose  $\varepsilon_0=\delta\left(K,N,\gamma,\eta,\frac{1}{10}\gamma,\frac{1}{10}\gamma\right)$ and \\ $\theta=\theta\left(K,N,\gamma,\eta,\frac{1}{10}\gamma,\frac{1}{10}\gamma\right)$ as in the previous corollary. Let $n_0(K,N,\gamma,\eta)$ be a sufficiently big natural number so that $\gamma^{j-1}\leq \theta$ for all $j\geq n_0$. 
Then as we are in the hypothesis $x\in C^{\varepsilon_0}_{\gamma^{-N},\gamma^{j-1}}\cap\mathcal{S}^k_{\eta,2^{-1}\gamma^{j-1}}$, we can apply the previous corollary with $r=\gamma^{j-1}$ to obtain $F$ a $\frac{1}{20}\gamma^j$-GH equivalence between the ball $B_{2^{-1}\gamma^{j-1}}(x)$ and some ball of the same radius in a metric cone $\mathbb R^{l}\times C(\tilde{Z})$ with $l\leq k$. We also obtain

\begin{equation}\label{eq:improvedinclusion}
B_{2^{-1}\gamma^{j-1}}(x)\cap\mathcal{S}^{k}_{\eta,2^{-1}\gamma^j}\cap C^{\varepsilon_0}_{\gamma^{-N},\gamma^{j-1}}\subset T_{\frac{1}{20}\gamma^j}(F(\setR^l\times\{\tilde{z}^*\}))\cap B_{2^{-1}\gamma^{j-1}}(x),
\end{equation} 
and then the sought estimate about the number of balls of radius $2^{-1}\gamma^j$ necessary to cover $B_{2^{-1}\gamma^{j-1}}(x)\cap\mathcal{S}^{k}_{\eta,2^{-1}\gamma^j}\cap C^{\varepsilon_0}_{\gamma^{-N},\gamma^{j-1}}$ follows from \eqref{eq:improvedinclusion} and the observation that in the Euclidean space $\setR^k$ the number of balls of radius $\gamma^{j}$ needed to cover a ball of radius $\gamma^{j-1}$ can be controlled by $c\gamma^{-k}$, for some dimensional constant $c=c(k)>0$.

\end{proof}

\end{document}